\declaretheorem{theorem}
\newcommand{\bun}{\mathrm{Bun}}
\newcommand{\B}{\mathrm{B}}
\newcommand{\D}{\mathrm{D}^b}
\newcommand{\map}{\mathrm{Map}}
\newcommand{\sll}{\mathrm{SL}_2}
\newcommand{\slr}{\mathrm{SL}_{n}}
\newcommand{\V}{\mathcal{V}}
\newcommand{\C}{\mathbf{C}}
\newcommand{\K}{\mathcal{K}}
\newcommand{\leng}{\ell}
\newcommand{\dmod}{\textrm{D-mod}}
\newcommand{\h}{h^\vee}
\newcommand{\sym}{\mathrm{Sym}}
\newcommand{\ind}{\mathrm{Ind\,}}
\newcommand{\indk}{\mathrm{Ind}_k}
\newcommand{\spec}{\mathrm{Spec\,}}
\newcommand{\Hom}{\mathrm{Hom}}
\newcommand{\ct}{\mathrm{CT}_*}
\newcommand{\cdl}{\mathcal{L}}
\newcommand{\co}{\mathcal{O}}
\newcommand{\N}{\mathbb{N}}
\newcommand{\g}{\mathfrak{g}}
\newcommand{\ga}{G^{ad}}
\newtheorem*{thm*}{Theorem}
\newtheorem{rmk}{Remark}
\newcommand{\ca}{\mathcal{A}}
\newtheorem{lem}{Lemma}
\newtheorem{cor}{Corollary}
\newtheorem*{cor*}{Corollary}
\newcommand{\rep}{\mathrm{Rep}}
\newcommand{\gr}{\mathrm{Gr}_G}
\newcommand{\grr}{\mathrm{Gr}_{G,\mathrm{Ran}}}
\newcommand{\ran}{\mathrm{Ran}}
\newcommand{\gm}{\mathbb{G}_m}
\newcommand{\loc}{\mathrm{Loc}}
\newcommand{\hk}{\mathrm{Hk}^\lambda}
\begin{document}

\title{Moduli of bundles and semiorthogonal decomposition}

\author{Kai Xu}
\email{kaixu@math.harvard.edu;kaixu1996@gmail.com}

\maketitle
\onehalfspacing
\begin{abstract}
    In this paper we construct semiorthogonal decompositions of moduli of principal bundles on a curve into its symmetric powers, for both the moduli stack of all $G$-bundles and the coarse moduli space of semistable $G$-bundles.  The essential ingredients in the proof include Borel-Weil-Bott theory for loop groups, highest weight structure of current group representation, derived $\Theta$-stratification and local-global compatibility of Kac-Moody localization. For the full moduli stack of all $G$-bundles the decomposition we construct is complete, while for the coarse moduli space of semistable $G$-bundles it is not clear in full generality yet.
\end{abstract}

\section{Introduction}

Fano varieties occupy a central place in birational geometry as part of the building blocks of general algebraic varieties. Semiorthogonal decompositions of the derived category of coherent sheaves on a Fano variety are deeply related to its birational geometry and motive \cite{kuznetsov2010derived}. In contrast to the Calabi-Yau and general type case, the derived category of a Fano variety is always decomposable, and its semiorthogonal decomposition is closely related to its motivic decomposition and rationality problem.

Moduli theory provides a rich source of higher-dimensional Fano varieties: let $X$ be a smooth projective curve of genus $g>1$, $L$  be a line bundle on $X$, then the coarse moduli space\footnote{We will always use Roman type for coarse moduli space and Italic type for moduli stack.} $Bun_{n}^{L,ss}$  of semistable vector bundles on $X$ with rank $n$ and determinant $L$, are Fano varieties of dimension $(g-1)(n^2-1)$ (smooth if $\delta=(n,\deg L)=1$ and with Gorenstein canonical singularity in general \cite{beauville1998picard}). Analogous construction exists for any semisimple algebraic groups $G$. In low rank examples, they are known to admit a motivic decomposition into products of symmetric powers $\sym^{\lambda_1} X\times\cdots \times \sym^{\lambda_{n-1}} X$. They are believed to lift to a semiorthogonal decomposition \cite{lee2018remarks,lee2021symmetric}, although we only have partial decompositions yet.. 

% These derived categories are well-behaved (i.e. saturated) in the smooth case and in the singular case we want to replace $\D(Bun_{n}^{L,ss})$ by its categorical/non-commutative resolution of singularity $\D(Bun_{n}^{L,ss})^\sim$. There are natural constructios from the rigidified moduli stack $\D(\bun_{n}^{L,ss}/\B\mu_{n})$ (and similarly for $\D(Bun_{n}^{d,ss})$). 

% In our example, $\D(Bun_{n}^{L,ss})$, the large motivic decomposition indicates there is a large semiorthogonal decomposition into derived categories of products of symmetric powers $\D(\sym^{\lambda_1} X\times\cdots \times \sym^{\lambda_{r}} X)$, or the moduli of divisors colored by dominant weights of $\slr$. 

This is one of our main results:

\begin{restatable}{theorem}{vb}
     Let $X$ be a smooth projective curve of genus $g>1$, $L$  be a line bundle on $X$ so that $\delta=(n,\deg L)=1$. We have a semiorthogonal decomposition for ${Bun}_{n}^{L,ss}$:
\begin{equation*}
    \D({Bun}_{n}^{L,ss})=\langle \theta^{\otimes k}\otimes \D(\sym^{\lambda_1} X\times\cdots \times \sym^{\lambda_{n-1}} X),\cdots\rangle
\end{equation*}
where $\theta$ is the theta line bundle (positive generator of the Picard group), integers $(k,\lambda_1,\cdots,\lambda_{n-1})$ satisfy $0\leq k\leq 1, 0\leq \lambda_i$, $\sum_{i\leq l} (\lambda_i-(g-1))i(n-l)+\sum_{i>l} (\lambda_i-(g-1))l(n-i)\leq 0$ for all $0<l<n$, and when $k=1$ all other upper bounds are strict. $\cdots$ is the semiorthogonal complement of the previous subcategories. 
\end{restatable}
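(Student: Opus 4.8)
The plan is to prove a more structured statement on the moduli \emph{stack} $\bun_G^{\xi,ss}$ (with $G=\slr$ and $\xi$ recording the fixed determinant $L$) and then descend. Since $\delta=(n,\deg L)=1$, semistability coincides with stability, so $\bun_G^{\xi,ss}$ is a gerbe over the smooth projective Fano ${Bun}_n^{L,ss}$, and $\D({Bun}_n^{L,ss})$ is the direct summand of $\DS$ cut out by the trivial character of its inertia group. Hence it suffices to construct the desired semiorthogonal collection inside $\DS$ compatibly with the gerbe grading; the parameter $0\le k\le 1$ will record the two gerbe-twists of each block that survive, $\theta$ being the pullback of the determinant (theta) bundle.

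To build the blocks I would realize $\DD$ via uniformization, $\bun_G^\xi\simeq \map(X\setminus x,G)\backslash \gr$, together with Kac--Moody localization, identifying $\DD$ (at a suitable level) with a category of $\map(X\setminus x,G)$-equivariant complexes on $\gr$ — equivalently, Ran-theoretically, with a category assembled from representations of the current group. Borel--Weil--Bott for the loop group then attaches to each dominant weight $\lambda$ a \emph{standard} object $\Delta_\lambda$ and a \emph{costandard} object $\nabla_\lambda\in\DD$, concretely (co)inductions of the affine line bundles $\cdl_\lambda$, i.e.\ pushforwards of line bundles from the Hecke stacks $\hk$ over $\bun_B$. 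The highest-weight structure of current-group representations provides the key semiorthogonality $\Hom_{\DD}(\Delta_\lambda,\nabla_\mu[\bullet])=0$ for $\lambda\ne\mu$ and computes $\mathrm{REnd}(\Delta_\lambda)$; using the factorization structure of Zastava spaces — the defect-free Zastava space for $\sum_i\lambda_i\alpha_i$ being, after the relevant localization, $\sym^{\lambda_1}X\times\cdots\times\sym^{\lambda_{n-1}}X$ — one gets $\mathrm{REnd}(\Delta_\lambda)\simeq R\Gamma(\sym^{\lambda_1}X\times\cdots\times\sym^{\lambda_{n-1}}X,\co)$, hence a fully faithful embedding $\D(\sym^{\lambda_1}X\times\cdots\times\sym^{\lambda_{n-1}}X)\hookrightarrow\DD$ onto an admissible subcategory; these blocks generate the tautological subcategory $\Dtau$.

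Next I would cut down to the semistable locus using the derived $\Theta$-stratification of $\bun_G$ by Harder--Narasimhan type. Halpern--Leistner's theory, in its coherent incarnation, gives for each choice of window of weights a semiorthogonal decomposition of $\DD$ one of whose terms is $\DS$; an object $\theta^{\otimes k}\otimes\Delta_\lambda$ lies in $\DS$ exactly when its weight along every HN stratum lies in the chosen window, and local-global compatibility of Kac--Moody localization computes that weight from the local parabolic-induction model at the generic point of the stratum (a twisted $\bun_L$ for a Levi $L$). Running this for $G=\slr$ — whose HN strata are indexed by $0<l<n$ — turns window membership into the stated inequalities $\sum_{i\le l}(\lambda_i-(g-1))i(n-l)+\sum_{i>l}(\lambda_i-(g-1))l(n-i)\le 0$: here $\lambda_i-(g-1)$ is the Euler characteristic of a degree-$\lambda_i$ line bundle on $X$, the coefficients $i(n-l),\ l(n-i)$ are (up to the common factor $n$) entries of the inverse Cartan matrix of $\slr$, and the $k=1$ case enforces strict inequalities because the window is half-open and tensoring by $\theta$ moves to its opposite end. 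The surviving blocks then form a semiorthogonal collection inside $\DS$, ordered by a refinement of the dominance order on $\lambda$ and by $k$; passing to the weight-zero summand of the gerbe yields the asserted decomposition of $\D({Bun}_n^{L,ss})$ with $\cdots$ the (admissible) orthogonal complement.

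I expect the main obstacle to be the third step: proving the \emph{exact} local-global compatibility — that the representation-theoretically defined $\Delta_\lambda$ really has Harder--Narasimhan weights on $\bun_G$ equal to those predicted by the loop-group computation — and matching the half-open window so that the displayed inequality, with the correct strictness when $k=1$, is precisely the membership criterion for $\DS$. Secondary difficulties are checking that the span of the surviving blocks is admissible (so that the complement $\cdots$ is well defined) and that the gerbe grading is compatible with the $\Theta$-stratification, so that passing to the weight-zero summand preserves the semiorthogonality.
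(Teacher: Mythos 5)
Your outline tracks the paper's architecture closely: reduce to the stack via the $\B\mu_n$-gerbe, decompose $\D(\bun_G^\xi)$ by Borel--Weil--Bott plus the highest-weight structure of current-group representations, then pass to the semistable locus with Halpern--Leistner's $\Theta$-stratification, and finally take the gerbe-trivial summand. The one genuine variation is cosmetic: you identify $\mathrm{REnd}(\Delta_\lambda)$ with $R\Gamma(\sym^\lambda X,\co)$ via factorization of Zastava spaces, whereas the paper gets the same answer directly from the global Weyl module $W_X(\lambda)$, whose endomorphism ring $A_\lambda=\bigotimes(A^{\otimes\lambda_i})^{S_{\lambda_i}}$ \emph{is} the ring of functions on $\sym^\lambda X$ by \cite{chari2010categorical}; these are two dialects for the same computation, and the Weyl-module route is more elementary because it never leaves representation theory.

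Where your proposal has a real gap is in the $\Theta$-stratification step, and it is not the difficulty you flag as primary. You treat ``lies in $\DS$'' as a per-object window criterion on $\theta^{\otimes k}\otimes\Delta_\lambda$, and then conclude that ``the surviving blocks form a semiorthogonal collection inside $\DS$.'' But Halpern--Leistner's window theorem identifies $\DS$ with $\mathbf G^w\subset\DD$ for a \emph{single} choice of window $w$, and the infinitely many blocks $\mathcal L^{\otimes k}\otimes\D(\sym^\lambda X)$ do not sit inside one window. For the $\Hom$-space between two distinct blocks to be computed unchanged on the semistable locus, one needs the $\gm$-weight of $\mathcal Hom(E_\mu,E_\lambda)$ along each HN stratum $\delta$ to be strictly positive, which is the condition that $2(g-1)\rho-(\lambda^*+\mu)$ lie in the positive cone --- a \emph{pairwise} condition genuinely stronger than each block satisfying $(g-1)\rho-\lambda$ in the positive cone separately. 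The paper spells out exactly this (``\,this is strictly stronger than both\,\dots lying in the positive cone, so we get a number of full subcategories, only a subset of which are semiorthogonal''), and the remaining work in the paper's proof of the semistable theorem is precisely to analyze the projections $P_{\mu\lambda}$ and their strata-supported pieces $P^\delta_{\mu\lambda}$ to argue that the one-directional, stratum-local vanishing of morphisms globalizes to a semiorthogonal decomposition. Your ``main obstacle'' (exact local-global compatibility for the HN weight of $\Delta_\lambda$) is treated by the paper as a direct computation, while the step you relegate to ``secondary difficulties'' (admissibility and compatibility of grading) is adjacent to, but not the same as, the actual crux: propagating the pairwise, per-stratum weight inequalities to mutual semiorthogonality of all blocks inside $\DS$.
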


With our choice of Fourier-Mukai functor defined by the global Weyl modules, the order of blocks is that less dominant $\lambda$ maps to more dominant $\lambda$, and smaller $k$ maps to larger $k$. The derived categories and semiorthogonal decompositions are well-behaved (i.e. saturated) in the smooth case, and in the singular case, we can replace $\D(Bun_{n}^{L, ss})$ by a categorical/non-commutative resolution of singularity $\D(Bun_{n}^{L,ss})^\sim$ \footnote{It would be natural to compare with the quasi-BPS categories \cite{davison2023bps,puadurariu2025quasi}. For $G=SL_2$, the quasi-BPS category was completely understood in \cite{sink2025noncommutative}. We thank J. Tevelev for this comment.}. There are natural constructions from the rigidified moduli stack $\D(\bun_{n}^{L,ss}/\B\mu_{n})$ (and similarly for $\D(Bun_{n}^{d,ss})$). 

\begin{restatable}{theorem}{vbnc}
    Let $X$ be a smooth projective curve of genus $g>1$, $L$  be a line bundle on $X$, we have a non-commutative resolution of ${Bun}_{n}^{L,ss}$ and a semiorthogonal decomposition:
\begin{equation*}
    \D({Bun}_{n}^{L,ss})^\sim=\langle \theta^{\otimes k}\otimes \D(\sym^{\lambda_1} X\times\cdots \times \sym^{\lambda_{n-1}} X),\cdots\rangle
\end{equation*}
where $\theta$ is the theta line bundle (positive generator of the Picard group), integers $(k,\lambda_1,\cdots,\lambda_{n-1})$ satisfy $0\leq k\leq \delta$, $\delta$ divides $\sum i\lambda_i$, $\sum_{i\leq l} (\lambda_i-(g-1))i(n-l)+\sum_{i>l} (\lambda_i-(g-1))l(n-i)\leq 0$ for all $0<l<n$, and when $k=\delta$ all other upper bounds are strict. $\cdots$ is the semiorthogonal complement of the previous subcategories.
\end{restatable}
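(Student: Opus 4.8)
The plan is to deduce this statement from the complete semiorthogonal decomposition of the full moduli stack $\D(\bun_n^{L})$ (the $GL_n$, fixed-determinant instance of our construction for general $G$), whose blocks are indexed by all $(k,\lambda)\in\mathbb Z\times\mathbb Z_{\geq0}^{\,n-1}$ and whose $(k,\lambda)$-block is the image of a Fourier--Mukai functor $\Phi_{k,\lambda}\colon\D(\sym^{\lambda_1}X\times\cdots\times\sym^{\lambda_{n-1}}X)\to\D(\bun_n^{L})$ built from the global Weyl module $\mathbb W^{\lambda}$ twisted by the $k$-th power of the determinant (level-one) line bundle $\cdl$ on the stack. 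From this I would proceed in two steps: (i) restrict along the open immersion $\bun_n^{L,ss}\hookrightarrow\bun_n^{L}$, using the derived $\Theta$-stratification by Harder--Narasimhan type to see that this kills exactly the blocks supported on the unstable strata and leaves a semiorthogonal decomposition of $\D(\bun_n^{L,ss})$ indexed by the surviving $(k,\lambda)$; (ii) pass from the smooth stack $\bun_n^{L,ss}$, through its rigidification $\bun_n^{L,ss}/\B\mu_n$, to a smooth and proper sub-DG-category that will serve as $\D(Bun_n^{L,ss})^\sim$.

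\textbf{Step (i): restriction to the semistable locus.} The stack $\bun_n^{L}$ carries a derived $\Theta$-stratification whose open stratum is $\bun_n^{L,ss}$ and whose strata are indexed by Harder--Narasimhan types, the first step of such a type being cut out by a one-parameter degeneration into a maximal parabolic $P_l$ ($0<l<n$). By the window/quantization formalism for $\Theta$-stratifications, restriction to the open stratum carries the semiorthogonal decomposition of $\D(\bun_n^L)$ to one of $\D(\bun_n^{L,ss})$ whose blocks are precisely the $(k,\lambda)$ whose corresponding object is not set-theoretically supported on an unstable stratum. Using the local (affine Grassmannian) model for $\Phi_{k,\lambda}$ supplied by Borel--Weil--Bott for loop groups together with the highest weight structure of the current-group representation, one computes the weight of the $(k,\lambda)$-block against the degeneration defining the $l$-th stratum to be a positive multiple of $\sum_{i\leq l}(\lambda_i-(g-1))i(n-l)+\sum_{i> l}(\lambda_i-(g-1))l(n-i)$; the block survives iff this is $\leq0$ for every $0<l<n$, and the inequalities must be strengthened to strict ones exactly when $k$ attains the top value $\delta$ of its allowed range, because the window defining the embedding is half-open and the boundary weight then becomes resonant with the weight of the twisting line bundle. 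Local--global compatibility of Kac--Moody localization is what transports this local computation to the global functor $\Phi_{k,\lambda}$.

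\textbf{Step (ii): from the stack to a non-commutative resolution.} When $\delta=1$, semistable equals stable, $\bun_n^{L,ss}$ is a $\mu_n$-gerbe over the smooth projective variety $Bun_n^{L,ss}$, and $\D(\bun_n^{L,ss})$ splits into twisted derived categories indexed by $\mathbb Z/n$; the $(k,\lambda)$-block lies in the summand of weight $\sum i\lambda_i\bmod n$, and the untwisted summand, spanned by the blocks with $n\mid\sum i\lambda_i$ and $0\leq k\leq1$, is $\D(Bun_n^{L,ss})$ --- this recovers the first theorem above. For general $\delta$ the coarse space is singular along the strictly semistable locus and the coarse map is neither a gerbe nor flat, so instead I would pass to the rigidification $\bun_n^{L,ss}/\B\mu_n$, which is still a smooth Artin stack, proper over $Bun_n^{L,ss}$, but whose residual stabilizers along the strictly semistable locus are positive-dimensional and hence make its full derived category non-proper. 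Cutting it down by the central-character condition $\delta\mid\sum i\lambda_i$ (which makes the pieces supported there proper) and by the half-open fundamental domain $0\leq k\leq\delta$ for twisting by the pulled-back ample generator $\theta$ produces a smooth and proper sub-DG-category; I would take this for $\D(Bun_n^{L,ss})^\sim$ and verify the Kuznetsov--Lunts axioms of a categorical resolution: pullback along the coarse map embeds $\mathrm{Perf}(Bun_n^{L,ss})$, pushforward along the coarse map is an adjoint landing in $\mathrm{D}^{b}_{\mathrm{coh}}(Bun_n^{L,ss})$ and restricting to the identity on perfect complexes, smoothness is inherited from $\bun_n^{L}$, and properness is exactly what the two cuts above guarantee. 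The semiorthogonal decomposition of the theorem is then the one induced by step (i) on this subcategory.

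\textbf{Main obstacle.} I expect the principal difficulty to be the precise bookkeeping in step (i): establishing that the derived $\Theta$-stratification of $\bun_n^{L}$ is regular enough (exhaustive, with each stratum of finite magnitude) for the window theorem to yield an honest semiorthogonal decomposition with $\D(\bun_n^{L,ss})$ as the open block, and then pinning down the boundary of the surviving region --- in particular why the inequalities become strict exactly when $k=\delta$ --- which rests on the sharp Borel--Weil--Bott vanishing ranges for the global Weyl modules along the parabolic strata and on their compatibility with Kac--Moody localization. For $\delta>1$ there is the further, more structural task of verifying that the proposed $\D(Bun_n^{L,ss})^\sim$ genuinely satisfies the axioms of a smooth and proper categorical resolution, given how badly the coarse map degenerates along the strictly semistable locus.
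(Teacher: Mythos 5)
Your overall route---decompose $\D(\bun_n^{L})$ completely, restrict to the semistable locus along the Harder--Narasimhan $\Theta$-stratification, pass through the rigidification $\bun_n^{L,ss}/\B\mu_n$, and sort blocks by central character---does track the paper's reduction of Theorem 2 to Theorem 5 to Theorem 13/14. But there are two substantive gaps.

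First, in Step (i) the heuristic ``blocks not set-theoretically supported on an unstable stratum survive'' misdescribes what happens: the blocks are global, and nothing is supported on a stratum. The actual mechanism is excision together with $\mathbb{G}_m$-weight bounds on local cohomology supported on each HN stratum. More importantly, the weight condition that makes an individual block $\mathcal{L}^{\otimes k}\otimes\D(\sym^\lambda X)$ restrict fully faithfully (roughly, $2(g-1)\rho-(\lambda^*+\lambda)$ in the positive cone) is strictly weaker than the condition needed for two blocks indexed by $\lambda$ and $\mu$ to remain semiorthogonal (which needs $2(g-1)\rho-(\lambda^*+\mu)$ in the positive cone). So the collection of surviving blocks is not automatically semiorthogonal, and the paper has to do real additional work: it introduces the projection functors $P_{\mu\lambda}$ between the surviving pieces, factors them by HN support, and proves that any non-constant cyclic composition $P_{\lambda_n\lambda_1}\cdots P_{\lambda_1\lambda_2}$ vanishes, which is what lets one assemble a genuine semiorthogonal decomposition from a merely pairwise-one-directional collection. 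Your proposal reads as if the window theorem hands you the decomposition directly, and that step is exactly where the proof has to sweat. The strictness at $k=\delta$ also comes from the concrete bound $\langle 2(g-1)\rho-(\lambda^*+\mu),\delta\rangle + (h^\vee-(k-l))\lvert\delta\rvert^2$ degenerating when $k-l=h^\vee$, not from a ``half-open window becoming resonant.''

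Second, in Step (ii) you propose to \emph{define} $\D(Bun_n^{L,ss})^\sim$ as the span of the blocks after cutting by $\delta\mid\sum i\lambda_i$ and $0\le k\le\delta$. That is not what the paper does and it does not fit the statement: the theorem has a nontrivial semiorthogonal complement ``$\cdots$,'' so the listed blocks are not expected to generate. The paper takes $\D(Bun_n^{L,ss})^\sim:=\D(\bun_n^{L,ss}/\B\mu_n)\simeq\D(\bun_n^{L,ss})^{\B\mu_n}$, asserts this rigidified stack gives a categorical resolution of the coarse space, and then obtains the stated decomposition by intersecting Theorem 13's decomposition of $\D(\bun_n^{L,ss})$ with the trivial-$\B\mu_n$-character subcategory; the divisibility $\delta\mid\sum i\lambda_i$ is precisely the vanishing of the combined central character $k\xi+\bar\lambda\in Q$ (Theorem 14), not an ad hoc properness cut. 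Your version would also need to explain why your smaller category still satisfies the Kuznetsov--Lunts axioms (in particular why pullback from $\mathrm{Perf}(Bun_n^{L,ss})$ lands in it), which is unclear if $\cdots\neq 0$.
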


More general semiorthogonal decomposition exist for moduli of principal bundles: we can define a moduli space ${Bun}_{G}^{\xi,ss}$ of twisted $G$-bundles depending on a $Z(G)$-gerbe $\xi\in H^2(X,Z(G))$ which recovers ${Bun}_{n}^{L,ss}$ for $G=\slr$ and $\xi=\xi(L)$ the $n$-th root gerbe of $L$.

\begin{restatable}{theorem}{gnc}
 Let $X$ be a smooth projective curve of genus $g>1$, $G$ be a semisimple algebraic group of rank $r$, $Z(G)$ be the center of $G$ and $\xi\in H^2(X,Z(G))$, we have a semiorthogonal decomposition for ${Bun}_{G}^{\xi,ss}$:
\begin{equation*}
    \D({Bun}_{G}^{\xi,ss})^\sim=\langle \theta^{\otimes k}\otimes \D(\sym^{\lambda_1} X\times\cdots \times \sym^{\lambda_{r}} X),\cdots\rangle
\end{equation*}
where $\theta$ is the theta line bundle, integers $(k,\lambda_1,\cdots,\lambda_{r})$ satisfy $0\leq k<2 \delta$, $(g-1)\rho-\lambda$ lies in the cone generated by positive roots, and lies in the interior if $k=\delta$. Here $\rho$ is half the sum of positive roots, $\lambda=\sum_i\lambda_i\omega_i$ is the dominant weight with coefficients $\lambda_i$, $\delta=\frac{h^\vee}{\mathrm{ord}(\xi)}$, $h^\vee$ is the dual Coxeter number. $\cdots$ is the semiorthogonal complement of the previous subcategories, and $\D({Bun}_{r}^{L,ss})^\sim$ is a categorical resolution of singularity of ${Bun}_{G}^{\xi,ss}$ (which is $\D({Bun}_{r}^{L,ss})$ for smooth ${Bun}_{G}^{\xi,ss}$) 
\end{restatable}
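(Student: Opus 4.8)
The plan is to prove the statement first on the moduli stack $\bun_G^\xi$, where uniformization and Kac--Moody localization make the tautological part $\Dtau$ of $\DD$ accessible through representations of the current group and of the affine Lie algebra at a level governed by the power of $\theta$, and then to transport it to the coarse space by means of the derived $\Theta$-stratification. Concretely: (i) for each dominant weight $\lambda=\sum_i\lambda_i\omega_i$ and each $k$ build a Fourier--Mukai functor $\Phi^k_\lambda\colon\D(\sym^{\lambda_1}X\times\cdots\times\sym^{\lambda_r}X)\to\Dtau$ whose kernel is constructed from the sheaf $\mathcal W(\lambda)$ of global Weyl modules over $\sym^{\lambda_1}X\times\cdots\times\sym^{\lambda_r}X$, twisted by $\theta^{\otimes k}$; (ii) show these functors are fully faithful, semiorthogonal in the stated order, and generate $\Dtau$; and (iii) intersect this decomposition with the window subcategory produced by derived Kirwan surjectivity for the $\Theta$-stratification of $\bun_G^\xi$, identifying the surviving blocks as exactly those with $(g-1)\rho-\lambda$ in the cone generated by positive roots and $0\le k<2\delta$.

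For step (ii) the inputs are threefold. First, local--global compatibility of Kac--Moody localization: the localization functor from integrable modules at the appropriate level to sheaves on $\bun_G^\xi$ twisted by $\theta^{\otimes k}$ is compatible with the factorization structure on $\grr$, which is what converts a representation-theoretic computation into a statement about $\bun_G^\xi$. Second, the highest-weight structure of the category of current-group (equivalently affine) modules, in which the global Weyl modules $\mathcal W(\lambda)$ are the standard objects: the identity $\map(\Phi^k_\lambda\F,\Phi^k_\lambda\F')\simeq\map_{\sym^{\lambda_1}X\times\cdots}(\F,\F')$ follows from $\mathrm{Ext}^{>0}$-vanishing between a standard and the matching costandard object together with $\Hom$ equal to the base ring, while $\map(\Phi^{k'}_\mu\F',\Phi^k_\lambda\F)=0$ whenever $(k',\mu)$ is strictly more dominant than $(k,\lambda)$ follows from the highest-weight order in the $\lambda$-direction and the affine-Weyl periodicity (critical shift by $\theta^{\otimes\h}$) in the $k$-direction. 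Third, the loop-group Borel--Weil--Bott theorem, which computes the global sections of the relevant line bundles on affine flag varieties in terms of these modules and hence yields generation of $\Dtau$.

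For step (iii), $\bun_G^\xi$ carries a derived $\Theta$-stratification by Harder--Narasimhan type, with open stratum $\bun_G^{\xi,ss}$ and each unstable stratum $\cs_\alpha$ having center $\cz_\alpha$ a twisted moduli of $L_\alpha$-bundles for a proper Levi $L_\alpha\subset G$. Halpern--Leistner's window theorem produces a semiorthogonal decomposition of $\DD$ with one block $\mathcal G_w$ — complexes whose $\theta$-weights along each $\cs_\alpha$ lie in a window of the appropriate width — equivalent to $\D(\bun_G^{\xi,ss})$; passing to the summand of the correct $Z(G)$-character against the gerbe $\xi$ (for $\slr$, the divisibility condition $\delta\mid\sum i\lambda_i$) yields $\D({Bun}_G^{\xi,ss})^\sim$, a categorical resolution of the coarse space that agrees with $\D({Bun}_G^{\xi,ss})$ when there are no strictly semistable bundles. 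It then remains to decide which blocks $\theta^{\otimes k}\otimes\D(\sym^{\lambda_1}X\times\cdots)$ land in $\mathcal G_w$: the $\theta$-weight of $\mathcal W(\lambda)$ restricted to $\cz_\alpha$ along the destabilizing one-parameter subgroup of $\cs_\alpha$ is, by the loop-group Borel--Weil--Bott theorem applied to $L_\alpha$, an affine-linear function of the pairings $\langle\lambda,\alpha^\vee\rangle$ whose constant term is the $\rho$-shift coming from $\omega_X$ — $\deg\omega_X=2g-2$ contributes a shift of $g-1$ in each simple direction, which is the source of the $(g-1)\rho$ in the cone condition and, for $\slr$ with $\xi=\xi(L)$, of the $\lambda_i-(g-1)$ and the coefficients $i(n-l)$, $l(n-i)$. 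The block lies in $\mathcal G_w$ iff all these weights lie in the window, which unwinds to $(g-1)\rho-\lambda$ lying in the cone, strictly in the interior precisely at the half-period wall $k=\delta$ (where $\theta^{\otimes\delta}$ meets the $\rho$-shifted critical level; the denominator $\delta=\h/\mathrm{ord}(\xi)$ appears because for $\xi\neq0$ the Picard group of $\bun_G^\xi$ is generated by a fractional power of the basic line bundle). Restricting the surviving blocks through $\mathcal G_w\xrightarrow{\sim}\D(\bun_G^{\xi,ss})$ keeps full faithfulness and semiorthogonality, and the complement is $\cdots$; as a consistency check the excluded blocks are absorbed into the $\D(\cs_\alpha)$, which by induction on the semisimple rank (applied to the Levis $L_\alpha$) decompose again into symmetric powers.

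The main obstacle is the junction of the two halves. On one side, one must make Kac--Moody localization genuinely produce a semiorthogonal decomposition of $\Dtau$ with Fourier--Mukai kernels on $\sym^{\lambda_1}X\times\cdots\times\bun_G^\xi$ that are bounded, coherent, correctly supported, uniformly controlled as $\lambda$ varies, and compatible with the highest-weight/factorization structure after localization — the global Weyl modules are finite rank over the base, but organizing all of this into an honest orthogonal decomposition is the technical heart. On the other side, matching the derived-Kirwan window with the combinatorial cone requires the explicit derived $\Theta$-stratification of the \emph{twisted} stack $\bun_G^\xi$ (the Harder--Narasimhan strata with the gerbe twist and their quasi-smooth structure) together with a precise computation of the weights of $\mathcal W(\lambda)$ along the strata; pinning down the constant $\rho$-shift exactly, including its interaction with the level and with $\delta$, is the delicate point. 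Finally, when strictly semistable bundles are present one must still verify that $\mathcal G_w$ is honestly a categorical resolution of $\DS$ rather than merely smooth and proper over it — the local analysis at the strictly-semistable locus is the point the abstract flags as not yet settled in general, which is why the statement is phrased with $\D({Bun}_G^{\xi,ss})^\sim$.
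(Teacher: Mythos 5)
Your proposal follows essentially the same architecture as the paper: build Fourier--Mukai functors out of global Weyl module kernels using Weil uniformization, establish full faithfulness and semiorthogonality on $\D(\bun_G^\xi)$ via the highest-weight structure of $\rep G(\co)$ together with Borel--Weil--Bott for loop groups, and then descend to the semistable locus by a Halpern--Leistner window argument built on the Harder--Narasimhan $\Theta$-stratification, with the cone condition on $(g-1)\rho-\lambda$ and the bound on $k$ arising from the $\gm$-weight computation along the unstable strata. The only minor deviations from the paper's presentation are cosmetic — you emphasize Kac--Moody localization in the stack-level step where the paper uses it chiefly in the vanishing argument on the unstable strata (via compatibility of localization with the constant-term functor), you phrase the origin of $\delta=\h/\mathrm{ord}(\xi)$ slightly loosely (the Picard group of $\bun_G^\xi$ is honestly $\mathbb{Z}$ generated by $\cdl$; it is the \emph{descent} of $\cdl^{\mathrm{ord}(\xi)}$ to the coarse space that produces $\theta$ and hence the rescaling), and the paper's caveat ``not settled in full generality'' refers to completeness of the decomposition (whether $\cdots$ is a phantom) rather than to whether the window is a categorical resolution.
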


We now record two simple examples explicitly: for $G=\sll$ the moduli is smooth only when $\deg L=2k+1$, when $G=\mathrm{SL}_3$ the moduli is smooth when $\deg L=3k\pm 1$ but they are isomorphic by conjugation.

\begin{cor}
        We have a semiorthogonal decomposition for ${Bun}_{2}^{L,ss}$ when $\mathrm{deg}\,L$ is odd where the last $\cdots$ is a phantom category and conjecturally $0$:
\begin{align*}
    \D({Bun}_{2}^{L,ss})=\langle &\D(pt),\D(X),\cdots,\D(\sym^{g-1}X),\\
    &\theta\otimes\D(pt),\theta\otimes\D(X),\cdots,\theta\otimes\D(\sym^{g-2}X),\cdots\rangle
\end{align*}
\end{cor}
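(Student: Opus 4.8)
The plan is to derive the corollary by specializing the semiorthogonal decomposition of $\D(Bun_n^{L,ss})$ proved above to $n = 2$, $G = SL_2$ (hence $r = 1$, $h^\vee = 2$), and then analyzing the residual term. Since $\deg L$ is odd, $\delta = (2,\deg L) = 1$, so $Bun_2^{L,ss}$ is a smooth projective Fano variety of dimension $3g-3$ with $\mathrm{Pic} = \mathbb{Z}\cdot\theta$ and $\D(Bun_2^{L,ss})^\sim = \D(Bun_2^{L,ss})$; the decomposition therefore applies directly, with blocks $\theta^{\otimes k}\otimes\D(\sym^{\lambda_1}X)$ (a single symmetric power, since $r = 1$). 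The index set is then a short computation: for $n = 2$ the range $0 < l < n$ contains only $l = 1$, the sum over $i > l$ is empty, and the sum over $i \le l$ collapses to the single term $\lambda_1 - (g-1)$; so the constraint is $\lambda_1 \le g-1$, sharpened to $\lambda_1 \le g-2$ when $k = 1$, together with $k \in \{0,1\}$. Ordering the blocks as prescribed — increasing $\lambda_1$ for fixed $k$, then $k = 0$ before $k = 1$ — and writing $\sym^0 X = pt$ produces exactly the displayed list $\langle \D(pt),\dots,\D(\sym^{g-1}X),\ \theta\otimes\D(pt),\dots,\theta\otimes\D(\sym^{g-2}X),\ \cdots\rangle$.

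It remains to see that $\cdots$, the semiorthogonal complement of these blocks in $\D(Bun_2^{L,ss})$, is a phantom category. For this I would compare additive invariants with the classical computation of the motive of $Bun_2^{L,ss}$ for odd $\deg L$ (Newstead and Atiyah--Bott for cohomology, del Ba\~{n}o at the motivic level): the class of $Bun_2^{L,ss}$ in the Grothendieck ring of Chow motives is exactly $\sum_{(k,\lambda_1)}\mathbb{L}^{c(k,\lambda_1)}[\sym^{\lambda_1}X]$, the sum running over the index set above and the Tate twists matching the codimension shifts carried by the Fourier--Mukai kernels. Hochschild homology and $K_0$ are additive along semiorthogonal decompositions, invariant under the line-bundle twists $\theta^{\otimes k}$, and — via Hochschild--Kostant--Rosenberg — compute the Hodge cohomology of each smooth projective block $\sym^{\lambda_1}X$; so this matching forces $HH_*(\cdots) = 0$ and $K_0(\cdots) = 0$. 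Since $\cdots$ is admissible in the smooth proper category $\D(Bun_2^{L,ss})$, it is a phantom.

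The real obstacle — and the reason the corollary only says $\cdots$ is "conjecturally $0$" — is that a phantom subcategory of $\D(\text{smooth projective})$ need not be zero; proving $\cdots = 0$ is precisely the completeness statement, which holds for the full stack $\bun_G$ but is open for the coarse space in general. As a sanity check I would record the case $g = 2$, where $Bun_2^{L,ss}$ is the smooth complete intersection of two quadrics in $\mathbb{P}^5$ (Newstead, Narasimhan--Ramanan): a single mutation converts the decomposition above into Kuznetsov's $\langle \D(X),\mathcal{O},\mathcal{O}(1)\rangle$ (Bondal--Orlov, Kuznetsov), so there $\cdots$ vanishes unconditionally, consistent with the general expectation.
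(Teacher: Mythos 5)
Your derivation of the index set by specializing Theorem 1 to $n=2$, $r=1$ is exactly what the paper does; the corollary is stated there without a separate proof precisely because it is a direct specialization, with the phantom/vanishing claim for $\cdots$ attributed to the prior literature (Tevelev--Torres, Xu--Yau, and Tevelev's later work). Your outline of the phantom argument matches the standard route; the one place to be careful is the claim $K_0(\cdots)=0$: matching Chow motives gives you $HH_*(\cdots)=0$ via the Hodge realization and HKR, but $K_0$ is not determined by the motive, so to get the honest phantom (rather than quasi-phantom) conclusion you need a separate comparison of Grothendieck groups, which is indeed available in this case but does not follow formally from the motivic identity alone.
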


This celebrated conjecture, raised independently by Belmans-Galkin-Mukhopadhyay and Narasimhan, was proved\footnote{None of the two papers prove $\cdots=0$ half of the conjecture, it was proved later by Tevelev in \cite{tevelev2023braid}.} independently by Tevelev-Torres\cite{tevelev2024bgmn} and Xu-Yau\cite{xu2021semiorthogonal}. See also \cite{lee2023derived} for following works. Our previous method does not seem to yield a complete decomposition for $r>3$, although it provides partial decompositions as discussed above.

\begin{cor}
        We have a semiorthogonal decomposition for ${Bun}_{3}^{L,ss}$ when $\mathrm{deg}\,L$ is not divisible by $3$ where $k_1+2k_2,2k_1+k_2\leq 3(g-1)$ and both inequality should be strict for components with the factor $\theta$:
\begin{align*}
    \D({Bun}_{3}^{L,ss})=\langle \D(pt),\theta\otimes \D(pt),\cdots,\D(\sym^{k_1}X\times\sym^{k_2}X),\\
    \theta\otimes\D(\sym^{k_1}X\times\sym^{k_2}X),\cdots,\D(\sym^{g-1}X\times\sym^{g-1}X),\cdots\rangle
\end{align*}
\end{cor}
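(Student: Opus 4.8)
The plan is to read this off directly from the first main theorem --- the semiorthogonal decomposition of $\D({Bun}_n^{L,ss})$ in the coprime case $\delta=(n,\deg L)=1$ --- by setting $n=3$. The first step is the observation that $\deg L$ not divisible by $3$ forces $\delta=\gcd(3,\deg L)=1$, so that ${Bun}_3^{L,ss}$ is smooth \cite{beauville1998picard} and that theorem applies verbatim, the categorical resolution $\D({Bun}_3^{L,ss})^\sim$ being honestly $\D({Bun}_3^{L,ss})$. The residues $\deg L\equiv 1$ and $\deg L\equiv 2\pmod 3$ both fall under $\delta=1$, and in any case the two moduli are exchanged by the outer automorphism of $\mathrm{SL}_3$, so there is effectively a single case, as noted above.

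The second step is to unwind the combinatorics for $n=3$. I would write $k_1=\lambda_1$, $k_2=\lambda_2$, so that the generic block $\theta^{\otimes k}\otimes\D(\sym^{\lambda_1}X\times\sym^{\lambda_2}X)$ with $k\in\{0,1\}$ becomes $\D(\sym^{k_1}X\times\sym^{k_2}X)$ when $k=0$ and $\theta\otimes\D(\sym^{k_1}X\times\sym^{k_2}X)$ when $k=1$. Specializing the constraint $\sum_{i\leq l}(\lambda_i-(g-1))i(n-l)+\sum_{i>l}(\lambda_i-(g-1))l(n-i)\leq 0$ to $n=3$ with $l=1$ and $l=2$ gives $2(\lambda_1-(g-1))+(\lambda_2-(g-1))\leq 0$ and $(\lambda_1-(g-1))+2(\lambda_2-(g-1))\leq 0$, i.e. $2k_1+k_2\leq 3(g-1)$ and $k_1+2k_2\leq 3(g-1)$, both strict when $k=1$ --- exactly the stated range. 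The index $(k_1,k_2)=(g-1,g-1)$ is the unique maximal one and has no $\theta$-twisted companion precisely because strictness then fails, which is why the list ends with $\D(\sym^{g-1}X\times\sym^{g-1}X)$. As a cross-check one may instead specialize the general principal-bundle theorem to $G=\mathrm{SL}_3$, $\xi=\xi(L)$, where $h^\vee=3$ and $\mathrm{ord}(\xi)=3$ so that the relevant $\delta$ equals $1$, $0\leq k<2$, and the cone condition $(g-1)\rho-\lambda\in\mathbb{R}_{\geq0}\alpha_1+\mathbb{R}_{\geq0}\alpha_2$ unwinds to the same two inequalities.

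The third step is to match the ordering: grouping $\D(\sym^{k_1}X\times\sym^{k_2}X)$ with its $\theta$-twist and running $\lambda$ from $0$ to $(g-1)\omega_1+(g-1)\omega_2$ in dominance order is a linear extension of the partial order dictated by the main theorem (less dominant $\lambda$ before more dominant, $k=0$ before $k=1$), hence a valid semiorthogonal ordering, and the trailing $\cdots$ is the semiorthogonal complement exactly as there. I do not expect a genuine obstacle at this stage: the substantive input --- Borel--Weil--Bott for loop groups, the derived $\Theta$-stratification, and local--global compatibility of Kac--Moody localization --- has already been spent in proving the main theorem, and what remains is the elementary bookkeeping above together with the classical smoothness of ${Bun}_3^{L,ss}$. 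The only point requiring care is to state the conclusion no more strongly than the main theorem supplies: the final $\cdots$ is recorded merely as the semiorthogonal complement, with no phantom- or vanishing-claim, in contrast with the $\sll$ corollary above.
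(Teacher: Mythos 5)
Your derivation is correct and follows the same route the paper implicitly takes: $\deg L\not\equiv 0\pmod 3$ gives $\delta=1$ so the smooth-case theorem applies directly, and plugging $n=3$ into the constraint $\sum_{i\leq l}(\lambda_i-(g-1))i(n-l)+\sum_{i>l}(\lambda_i-(g-1))l(n-i)\leq 0$ at $l=1,2$ yields exactly $2k_1+k_2\leq 3(g-1)$ and $k_1+2k_2\leq 3(g-1)$, with strictness when $k=1$. The cross-check against the principal-bundle theorem (with $h^\vee=3$, $\mathrm{ord}(\xi)=3$, $\delta=1$, and the positive-root cone condition unwinding to the same pair of inequalities) is also sound, and the observation that $(g-1,g-1)$ is the unique extremal index with no $\theta$-twisted companion correctly explains why the displayed list terminates as it does.
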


Although we started with the Fano variety $Bun_n^{L,ss}$, it is more natural to view it as the coarse moduli space of the underlying stack. We have natural morphisms $Bun_n^{L,ss}\longleftarrow\bun_{n}^{L,ss}/\B\mu_{n}\longleftarrow \bun_{n}^{L,ss}\longrightarrow \bun_{n}^{L}$, where Roman type is used for moduli stacks and the map $\bun_{n}^{L,ss}/\B\mu_{n}\longrightarrow Bun_n^{L,ss}$ gives a categorical resolution of singularity. So the study of $\D(\bun_{n}^{L,ss})^\sim\simeq \D(\bun_{n}^{L,ss})^{\B\mu_{n}}$ is reduced to the study of $\D(\bun_{n}^{L,ss})$ and Theorem 2 is similarly reduced to the following:

\begin{restatable}{theorem}{vbs}
    We have a semiorthogonal decomposition for ${\bun}_{G}^{\xi,ss}$:
\begin{equation*}
    \D({\bun}_G^{\xi,ss})=\langle \mathcal{L}^{\otimes k}\otimes \D(\sym^{\lambda_1} X\times\cdots \times \sym^{\lambda_{n-1}} X),\cdots\rangle
\end{equation*}
where $\mathcal{L}$ is the fundamental determinant line bundle, $k\in \mathbb{Z}$ and $\lambda\in\N^{r}$ satisfy $0\leq k<2h^\vee$, $\sum_{i\leq l} (\lambda_i-(g-1))i(n-l)+\sum_{i>l} (\lambda_i-(g-1))l(n-i)\leq 0$ for all $0<l<n$, and when $k\geq h^\vee$ all other upper bounds are strict. $\cdots$ is the semiorthogonal complement of the previous subcategories. 
\end{restatable}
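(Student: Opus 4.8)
The plan is to prove the semiorthogonal decomposition for the semistable locus $\bun_G^{\xi,ss}$ by combining a global Fourier--Mukai construction with a stratification/localization argument that reduces all computations to Borel--Weil--Bott for the affine/loop group. First I would construct the candidate functors: for each pair $(k,\lambda)$ in the stated index set, define a Fourier--Mukai type functor $\Phi_{k,\lambda}\colon \D(\sym^{\lambda_1}X\times\cdots\times\sym^{\lambda_r}X)\to \D(\bun_G^{\xi,ss})$ whose kernel is built from the global Weyl module associated to $\lambda$ (the ``taut'' structure alluded to in the macro $\Dtau$), twisted by $\cdl^{\otimes k}$. The symmetric powers enter because the global Weyl module, as a module over the current algebra $\g\otimes\co_X$ (or the relevant Beilinson--Drinfeld/factorization object), has an associated graded filtration whose pieces are indexed by effective divisors, i.e.\ by points of the symmetric powers of $X$; this is the ``highest weight structure of current group representation'' from the abstract. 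I would make this precise via Kac--Moody localization: sections of the line bundles $\cdl^{\otimes k}$ on $\bun_G$ are computed by the localization functor from Kac--Moody modules at the appropriate level, and local-global compatibility identifies $R\Gamma$ of the relevant sheaves on $\bun_G^{ss}$ with (co)invariants of Weyl/Verma-type modules.

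Next I would establish the two halves of a semiorthogonal decomposition: (i) \emph{semiorthogonality} (the $\Hom$-vanishing between blocks in the forbidden direction), and (ii) \emph{generation} (the blocks together split-generate). For (i), the key input is a Borel--Weil--Bott computation for the loop group $LG$: the relevant $\Hom$-complex between $\Phi_{k,\lambda}$ and $\Phi_{k',\lambda'}$ is expressed, via adjunction and the factorization structure, as the loop-group cohomology of a line bundle twisted by a finite-dimensional representation, and the numerical conditions $\sum_{i\le l}(\lambda_i-(g-1))i(n-l)+\sum_{i>l}(\lambda_i-(g-1))l(n-i)\le 0$ together with $0\le k<2\h$ are exactly the conditions forcing the relevant affine Weyl group dot-action to send the weight out of the dominant chamber, so the cohomology vanishes; the strictness when $k\ge\h$ is the usual boundary-of-chamber/linkage subtlety. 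This is where the genus-$g$ curve enters numerically: the shift by $(g-1)\rho$ (equivalently $(g-1)$ in each $\lambda_i$) is the anticanonical twist coming from $\det R\Gamma(X,-)$, reflecting that $\bun_G$ has ``dimension'' governed by $(g-1)\dim G$. For (ii), I would use the derived $\Theta$-stratification of $\bun_G$ (in the sense of Halpern-Leistner): semistability is the open stratum, and the instability strata are indexed, up to the relevant parabolics, by the same data as the $\lambda$'s and $k$'s. Running the associated ``baric''/weight decomposition of $\D(\bun_G)$ and then restricting to the semistable locus, each stratum contributes a block isomorphic (via Kac--Moody localization on the Levi and the parabolic induction) to $\cdl^{\otimes k}\otimes\D(\mathrm{Sym}^\lambda X)$, and the window/magic-window argument shows these exhaust $\D(\bun_G^{ss})$.

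Concretely, the steps in order are: (1) recall the Kac--Moody localization functor and its local-global compatibility, and identify $\Pic(\bun_G^{\xi})$ with $\mathbb{Z}\cdot\cdl$ so that the twists $\cdl^{\otimes k}$ make sense; (2) construct the global Weyl-module kernels and the functors $\Phi_{k,\lambda}$, and prove the highest-weight/filtration statement identifying the associated graded with sheaves pulled back from $\sym^\lambda X$; (3) prove fully faithfulness of each $\Phi_{k,\lambda}$ by the loop-group BWB computation (the diagonal $\Hom$ reduces to cohomology of a dominant line bundle, giving the structure sheaf); (4) prove semiorthogonality between distinct blocks by the same BWB mechanism, checking that the stated inequalities are precisely the non-dominance conditions; (5) prove generation using the derived $\Theta$-stratification and a windows argument, matching strata with blocks; (6) deduce Theorems for $Bun_n^{L,ss}$ and $Bun_G^{\xi,ss}$ by descent along $\bun_n^{L,ss}\to\bun_n^{L,ss}/\B\mu_n\to Bun_n^{L,ss}$, noting the $k$-range contracts from $[0,2\h)$ to the stated $\delta$-dependent range after taking $\mu_n$-invariants (this is the arithmetic $\delta=\h/\mathrm{ord}(\xi)$ bookkeeping).

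The main obstacle I anticipate is step (5), generation: the derived $\Theta$-stratification of $\bun_G$ is infinite (infinitely many Harder--Narasimhan strata), so one must argue that only finitely many strata survive after restricting to the semistable locus and intersecting with a fixed ``window'' of line-bundle twists, and that the semiorthogonal ``tails'' organize into exactly the claimed blocks without leftover pieces in the forbidden range — the bookkeeping of which $(k,\lambda)$ appear, with the correct strict-versus-nonstrict inequalities at $k=\h$, is delicate. A secondary difficulty is making the loop-group Borel--Weil--Bott statement precise enough at the \emph{derived} level (i.e.\ controlling all higher cohomology, not just $H^0$) and ensuring the factorization/Ran-space gluing that produces the $\sym^\lambda X$ factors is compatible with the localization functor on the nose; this is the ``local-global compatibility of Kac--Moody localization'' that the abstract flags as an essential ingredient, and getting the twists and shifts to line up is where sign/degree errors would hide.
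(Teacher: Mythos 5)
Your outline correctly identifies the broad ingredients (global Weyl module kernels, loop-group Borel--Weil--Bott, the Harder--Narasimhan $\Theta$-stratification \`a la Halpern-Leistner, reduction from the full stack to the semistable locus), but there are three substantive mismatches with what the theorem actually requires and how the paper proves it.

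First, step (ii)/(5) of your plan aims to prove generation of $\D(\bun_G^{\xi,ss})$ by the listed blocks via a magic-windows argument. But the theorem does not claim completeness: the ``$\cdots$'' in the statement is by definition the semiorthogonal complement, and the abstract explicitly says that for the semistable locus the decomposition ``is not clear in full generality yet.'' The paper's strategy is to prove a \emph{complete} decomposition only for the full stack $\D(\bun_G^\xi)$ (Theorem~\ref{gs}/5), where generation follows from Weil uniformization and generation of $\D(\gr)$ by tautological bundles; for $\bun_G^{\xi,ss}$ one only needs that each block embeds and that the blocks in the specified range are mutually semiorthogonal, with no exhaustion claim. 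Attempting a windows-style generation argument here is both unnecessary and, with current techniques, would not close.

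Second, you attribute the numerical conditions $\sum_{i\le l}(\lambda_i-(g-1))i(n-l)+\sum_{i>l}(\lambda_i-(g-1))l(n-i)\le 0$ and $0\le k<2\h$ to affine Weyl dot-action dominance in Borel--Weil--Bott. In the paper these inequalities arise from a different place: one already has semiorthogonality and full faithfulness in $\D(\bun_G^\xi)$ (from BWB at negative level plus the highest-weight structure of current-group representations), and the genus-dependent inequalities are the $\mathbb{G}_m$-weight bounds on the Harder--Narasimhan strata ensuring that $\Gamma_{\bun_G^{\xi,us}}(\bun_G^\xi,\mathcal{H}om(E,F))$ vanishes. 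Concretely the paper computes that the lowest $\mathbb{G}_m$-weight of $\sigma_\delta^*i_\delta^!\mathcal{H}om(E,F)$ on the stratum $\delta$ is at least $\langle 2(g-1)\rho-(\lambda^*+\mu),\delta\rangle+(\h-(k-l))|\delta|^2$, and positivity of this gives vanishing by the $\Theta$-stratification excision triangle. So the role of the two inputs is reversed from what you describe.

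Third, and this is the genuine technical subtlety you miss entirely: the pairwise vanishing condition $2(g-1)\rho-(\lambda^*+\mu)\in$ (interior of) the positive cone is strictly stronger than requiring $2(g-1)\rho-(\lambda^*+\lambda)$ and $2(g-1)\rho-(\mu^*+\mu)$ to each lie in the positive cone. Hence among the full subcategories that individually embed, not all pairs are semiorthogonal after restriction, and one must do extra work to organize them into a semiorthogonal decomposition. The paper does this by introducing projections $P_{\lambda\mu}$ between blocks and proving that any non-constant cyclic composition $P_{\lambda_n\lambda_1}\cdots P_{\lambda_1\lambda_2}$ vanishes, by filtering the projections according to which HN stratum supports them and controlling the $\mathbb{G}_m$-weights stratum by stratum. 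Without this step you cannot conclude a semiorthogonal decomposition from ``each block is fully faithful and some pairs are Hom-vanishing.''
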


The open substack $\bun_G^{\xi,ss}\longrightarrow \bun_{G}^{\xi}$ is the semistable locus determined by the Harder-Narasimhan filtration, and the machinery of derived $\Theta$-stratification developed by Halpern-Leistner allows us to relate their derived categories. $\D(\bun_{G}^{\xi,ss})\longrightarrow \D(\bun_{G}^{\xi})$ is a full subcategory which satisfies certain restrictions of weights on the unstable loci. (This is how the finite subsets arise.) So Theorem 3 is reduced to the following:

    \begin{restatable}{theorem}{gs}
For a smooth projective curve $X$ we have the following semiorthogonal decomposition
\begin{equation*}
    \D(\bun_G^\xi)=\langle \mathcal{L} ^{\otimes k}\otimes \D(\sym^\lambda X) \rangle
\end{equation*}
where $\mathcal{L}$ is the fundamental determinant line bundle,  $k\in \mathbb{Z}$, $\lambda\in\Lambda^+\simeq\N^{r}$ are dominant weights, and $\sym^\lambda X=\prod \sym ^{\lambda_i}X$ are the corresponding variety of colored divisors.
\end{restatable}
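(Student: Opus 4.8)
The plan is to exhibit each block as the image of a fully faithful Fourier--Mukai functor $\Phi_{k,\lambda}\colon\D(\sym^\lambda X)\to\D(\bun_G^\xi)$ whose kernel is, up to twisting by $\mathcal{L}^{\otimes k}$, the global Weyl module of the current group, and then to check (a) full faithfulness, (b) pairwise semiorthogonality in the claimed order, and (c) generation. To set up, I would fix a point of $X$ --- or, better, work over the Ran space --- and use the uniformization theorem (Beauville--Laszlo) to present $\bun_G^\xi$ as a quotient of the twisted affine Grassmannian $\gr_G$ by the ``global'' loop group; this is what exhibits $\mathcal{L}$ and lets one import the Borel--Weil--Bott calculus for line bundles on $\gr_G$ (fibrewise over the Schubert strata, the sections of $\mathcal{L}^{\otimes k}$ are Demazure, i.e.\ local Weyl, modules). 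Concretely, with $\hk$ the Hecke stack of modifications of coweight bounded by $\lambda$, carrying its two legs $p,q\colon\hk\rightrightarrows\bun_G^\xi$ and support map $\supp\colon\hk\to\sym^\lambda X$, set $\Phi_{k,\lambda}(\F)=p_*\bigl((q,\supp)^*(\co_{\bun_G^\xi}\boxtimes\F)\otimes\K_\lambda\bigr)\otimes\mathcal{L}^{\otimes k}$, where $\K_\lambda$ is the line bundle on $\hk$ carrying the Weyl-module structure. Equivalently $\Phi_{k,\lambda}$ is Kac--Moody localization at the level attached to $k$, applied to the global Weyl module $\mathbb{W}_X(\lambda)$, which by the local--global compatibility of the localization functor is a coherent sheaf on $\sym^\lambda X$ valued in level-$k$ representations.

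\emph{Full faithfulness.} By base change and the projection formula, $\Phi_{k,\lambda}^{R}\circ\Phi_{k,\lambda}$ is the convolution of $\K_\lambda$ with its dual along the self-correspondence $\hk\times_{\bun_G^\xi}\hk$ over $\sym^\lambda X\times\sym^\lambda X$. Borel--Weil--Bott for loop groups identifies the push-forward of the ``diagonal'' contribution with the identity kernel on $\sym^\lambda X$, while the highest-weight (PBW/crystal) structure of current-group representations makes the remaining ``off-diagonal'' contributions vanish; hence $\Phi_{k,\lambda}^{R}\circ\Phi_{k,\lambda}\simeq\mathrm{id}$. The same Borel--Weil--Bott input computes these convolutions on the nose, which is what keeps the later bookkeeping effective.

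\emph{Semiorthogonality.} The two indices are controlled by two different mechanisms. For $k'<k$, $\Hom_{\bun_G^\xi}(\Phi_{k,\lambda}\F,\Phi_{k',\lambda'}\G)$ reduces to the cohomology on the relevant Hecke stacks of a line bundle negative along the affine-Grassmannian directions, which vanishes by the negativity half of Borel--Weil--Bott --- the loop-group analogue of $R\Gamma(\mathbb{P}^N,\co(-1))=0$. For $k'=k$ and $\lambda'$ not $\geq\lambda$ in the dominance order, the convolution kernel $\K_\lambda*\K_{\lambda'}^{\vee}$ is a Weyl-type module not having $\lambda'$ among its highest weights, so its push-forward to $\sym^{\lambda'}X$ vanishes; this is precisely the $\mathrm{Ext}$-orthogonality of standard objects in the highest-weight category of current-group modules, transported to $\bun_G^\xi$. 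This produces the asserted order: $k$ increasing, and for fixed $k$ the dominance order on $\lambda$.

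\emph{Generation} is where the real work lies, and the step I expect to be the main obstacle. One must show $\{\Phi_{k,\lambda}(\D(\sym^\lambda X))\}_{k\in\mathbb{Z},\,\lambda\in\Lambda^+}$ classically generates $\D(\bun_G^\xi)$; equivalently, $\Phi_{k,\lambda}^{R}\mathcal{E}=0$ for all $(k,\lambda)$ forces $\mathcal{E}=0$. The intended argument is a resolution-of-the-diagonal for $\bun_G^\xi$ (equivalently a devissage on the uniformizing $\gr_G$): since $\bun_G^\xi$ is smooth and exhausted by quasi-compact opens on which $\mathcal{L}$ is relatively ample over the coarse moduli, the powers $\mathcal{L}^{\otimes k}$ generate up to the ``vertical'' stacky directions, and those are resolved by Hecke modifications, with the combinatorics governed by a BGG-type resolution of the vacuum module at each level and turned from a local Koszul complex on $\gr_G$ into a global complex over $\sym^\lambda X$ by the factorization (local--global) compatibility of Kac--Moody localization. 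The two delicate points are (i) the boundedness and convergence of the resulting doubly-infinite semiorthogonal decomposition against the non-quasi-compactness of $\bun_G^\xi$, to be handled by exhausting with $\Theta$-bounded opens, and (ii) the scheme-theoretic identification of the support of $\mathbb{W}_X(\lambda)$ with the reduced $\sym^\lambda X$ rather than a derived thickening, which requires the flatness/PBW properties of global Weyl modules over the configuration space together with the normality of Schubert varieties in $\gr_G$.
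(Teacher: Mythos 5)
Your overall architecture — Fourier--Mukai kernels built from Weyl-type modules, semiorthogonality via Borel--Weil--Bott for loop groups at levels $\leq 0$ plus the $\mathrm{Ext}$-orthogonality of standards in the current-group highest-weight category, with factorization used to reduce to points on the small diagonal — matches the paper's strategy for Theorem~5. But there are two places where your route diverges in a way that matters, one minor and one serious.

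The minor one: you describe the kernel as ``the line bundle $\K_\lambda$ on $\hk$ carrying the Weyl-module structure'' and parenthetically identify Demazure modules with local Weyl modules. The global Weyl module $W_X(\lambda)$ is \emph{not} a line bundle on a Hecke correspondence; it is a vector bundle on $\sym^\lambda X\times\bun_G^\xi$ built by gluing the $(A_\lambda,\mathfrak{g}\otimes A)$-bimodules $W_A(\lambda)=\ind_{H\ltimes N(A)}^{G(A)}(k_\lambda)$ over affine charts $\spec A$ of $X$, the key geometric input being that $W_{(-)}(\lambda)$ takes Zariski localization of rings to localization of modules (verified via the character formula). The ``line bundle on the Hecke stack'' picture is the \emph{Demazure}-module variant (the paper's later Theorems 14--15); level-$1$ Demazure modules coincide with local Weyl modules only in simply-laced type, so the parenthetical equivalence fails for $\g$ of type $B,C,F,G$, and even where it holds it is a theorem (Fourier--Littelmann, Chari--Loktev), not a definition. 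This confusion propagates into your full-faithfulness step, which is in any case subsumed once one computes all $\Hom$ spaces between the standard objects, as the paper does.

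The serious gap is generation, and you are right that it is the crux. Your proposed mechanism --- relative ampleness of $\mathcal{L}$ over the coarse moduli, plus resolving ``vertical stacky directions'' by Hecke modifications governed by a BGG resolution of the vacuum module --- does not get off the ground: $\bun_G^\xi$ has no coarse moduli space, is not quasi-compact, and has positive-dimensional stabilizers everywhere, so there is no ampleness to invoke and no obvious exhaustion that interacts well with the Hecke action. The paper's generation argument is entirely different and, crucially, \emph{local}: one first proves that $\D(\gr)$ is generated by the pullbacks of $\rep\,G(\co)$ together with powers of $\cdl$, by identifying $\gr$ with a partial flag variety $\mathbb{G}/\mathbb{P}$ of the affine Kac--Moody group and invoking the Kac--Peterson Peter--Weyl theorem (the coordinate ring $\Gamma(\mathbb{G},\co)$ is pro-generated by finite-dimensional $\mathbb{P}$-representations, so its pullback generates $\D(\mathbb{G}/\mathbb{P})$). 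One then writes $\bun_G^\xi=\prod\gr_{G,x}/G(K_X)^\xi$ via (twisted) Weil uniformization; the local generators are $G(K_X)^\xi$-eigensheaves, and those with trivial eigenvalue --- i.e.\ those whose level is constant over $x$ --- descend, giving exactly $\mathcal{L}^{\otimes k}\otimes\V$. Without a substitute for this Kac--Moody Peter--Weyl input, your sketch leaves generation unproved; the ``delicate points'' you flag are symptoms of using the wrong mechanism rather than issues to be patched.
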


This also has a non-projective analogue:

    \begin{restatable}{theorem}{gsaf}
For a smooth non-projective curve $X=\spec A$ we have the following semiorthogonal decomposition
\begin{equation*}
    \D(\bun_G)=\rep\, G(A)=\langle \D(\sym^\lambda X) \rangle
\end{equation*}
where $\lambda\in\Lambda^+\simeq\N^{r}$ are dominant weights, and $\sym^\lambda X=\prod \sym ^{\lambda_i} X=\bigotimes (A^{\otimes \lambda_i})^{S_i}$ are the corresponding variety of colored divisors.
\end{restatable}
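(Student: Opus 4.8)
The plan is to deduce the affine statement from the projective one (Theorem \ref{gs}) by a localization/degeneration argument, but actually the cleaner route is to prove it directly from the representation-theoretic structure of $\rep\, G(A)$, where $G(A)$ is the current group. First I would identify $\D(\bun_G)$ with $\rep\, G(A)$ — for $X=\spec A$ affine, a $G$-bundle on $X$ is trivializable Zariski-locally and in fact globally (since $X$ is an affine curve, $H^1(X,G)$ is trivial for the relevant topology once $G$ is semisimple and simply connected; in general one works with the stack quotient), so $\bun_G = \B G(A)$ and $\D(\bun_G)=\rep\, G(A)$, the derived category of algebraic representations of the ind-affine group scheme $G(A)$. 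The key input is the highest weight structure on $\rep\, G(A)$ coming from the theory of global Weyl modules: for each dominant weight $\lambda\in\Lambda^+$ there is a global Weyl module $W(\lambda)$, it is a module over $G(A)$ and over the ``highest weight algebra'' $\ca_\lambda$, and as established in the work on current algebras (Chari–Fourier–Khandai and successors) one has $W(\lambda)\cong \ca_\lambda\otimes(\text{local Weyl module pieces})$ with $\ca_\lambda\cong \bigotimes_i (A^{\otimes\lambda_i})^{S_{\lambda_i}} = \co(\sym^\lambda X)$.

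The main steps would be: (1) Show the objects $W(\lambda)$, viewed appropriately as objects of $\D(\bun_G)$ (or rather the functor $\Phi_\lambda\colon \D(\sym^\lambda X)\to\D(\bun_G)$ induced by $M\mapsto W(\lambda)\otimes_{\ca_\lambda}M$, using the $\ca_\lambda=\co(\sym^\lambda X)$-module structure), are fully faithful. This amounts to computing $R\Hom_{G(A)}(W(\lambda)\otimes_{\ca_\lambda}M, W(\lambda)\otimes_{\ca_\lambda}N)$ and identifying it with $R\Hom_{\sym^\lambda X}(M,N)$; the cohomological input is that $W(\lambda)$ is a projective-type object relative to $\ca_\lambda$ and that $\Hom_{G(A)}(W(\lambda),W(\lambda))=\ca_\lambda$ with higher Ext vanishing, which is the affine shadow of Borel–Weil–Bott for the loop group. (2) Establish semiorthogonality: $R\Hom_{G(A)}(W(\mu)\otimes M', W(\lambda)\otimes M)=0$ unless $\lambda\leq\mu$ (or whichever direction the ordering runs), again using the highest-weight/BGG-type structure — a morphism from $W(\mu)$ to something built from $W(\lambda)$ forces $\lambda$ to dominate $\mu$ among the weights appearing. (3) Prove generation: the $W(\lambda)$ generate $\rep\, G(A)$ as a triangulated category. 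For this I would filter the regular representation $\co(G(A))$ (or a compact generator of $\rep\, G(A)$) by global Weyl modules — this is the ``Weyl filtration'' statement, the analogue of the fact that $\co(G/B)$-type objects generate — and conclude by dévissage that the subcategory generated by the $\Phi_\lambda$'s is everything.

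I expect the generation step (3) to be the main obstacle. Full faithfulness and semiorthogonality are essentially formal consequences of the known homological properties of global Weyl modules, but showing that there are ``enough'' global Weyl modules to build every object of $\rep\, G(A)$ requires knowing that an appropriate set of generators admits a (possibly infinite, exhaustive) filtration whose subquotients are global Weyl modules tensored with $\co(\sym^\lambda X)$-modules, and controlling the convergence/completeness of this filtration in the ind-scheme setting. This is where one needs the local-global compatibility of Kac–Moody localization and the highest-weight-category structure in its strongest form; in the projective case (Theorem \ref{gs}) this is handled via the derived $\Theta$-stratification, and here the affine analogue should be cleaner — the $\Theta$-stratification degenerates since everything is semistable — but making the dévissage rigorous, in particular checking that no ``phantom'' piece survives, is the crux. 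Once that is in place, the identification $\sym^\lambda X=\bigotimes_i (A^{\otimes\lambda_i})^{S_i}$ on coordinate rings is immediate from the definition of colored divisors on an affine curve.
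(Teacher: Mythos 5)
Your proposal takes essentially the same route as the paper: identify $\D(\bun_G)$ with $\rep\, G(A)$ via triviality of $G$-bundles on an affine curve, then read off the semiorthogonal decomposition from the (generalized) highest-weight structure on $\rep\,G(A)$ whose standard objects are global Weyl modules $W_A(\lambda)$ with $\mathrm{End}(W_A(\lambda))\cong \ca_\lambda=\bigotimes_i (A^{\otimes\lambda_i})^{S_{\lambda_i}}=\co(\sym^\lambda X)$. The paper itself is terse at this point — it simply says to run the same argument as the projective case or to use a compactification, with the substantive content delegated to the highest-weight-category discussion preceding Theorem \ref{loc} and to the cited literature (Chari–Fourier–Khandai et al.) on global Weyl modules over a general coefficient ring $A$ — so your more detailed three-step plan (full faithfulness via $\mathrm{Ext}^*(W(\lambda),W(\lambda))=\ca_\lambda$, semiorthogonality via the partial order on weights, generation via Weyl filtrations of a generator) is a faithful unpacking of what the paper intends. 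You are also right that in the affine setting the $\Theta$-stratification machinery is not needed; generation is absorbed into the assertion that $\rep\,G(A)$ is a (generalized) highest-weight category with the $W_A(\lambda)$ as distinguished generators, which is exactly where the paper leans on the cited results rather than proving dévissage from scratch.
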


The full moduli stack ${\bun}_{G}^{\xi}$ is much easier to study because of the Weil uniformization, which realizes ${\bun}_{G}^{\xi}$ as a double coset. This is most well-studied for trivial $\xi$, where we have $${\bun}_{G}^{\mathcal{O}}=\bun_G=\prod_{x\in X} G(\co_x)\backslash \prod_{x\in X} G(\K_x)/G(K_X)$$ Here $\co_x\simeq k[[t]]$, $\K_x\simeq k((t))$, and $K_X$ is the function field of $X$. We are taking the restricted product, the reader is referred to Section 2 for details. We prove that for nontrivial $\xi$ we need to twist the right-hand side by the outer-automorphism of $G(K_X)$ corresponding to $\xi$:
$${\bun}_{r}^L=\bun_G^\xi=\prod_{x\in X} G(\co_x)\backslash \prod_{x\in X} G(\K_x)/G(K_X)^\xi$$

This double coset representation allows us to reduce the study of tautological bundles and derived category to representation theoretical problems about loop group $G(\K_x)$ and current group $G(\co_x)$. $G(K_X)$ is negligible because of its homological contractibility. The representations of $G(\K_x)$ is clearly understood geometrically by the affine Borel-Weil-Bott theory, and Theorem 5 is finally reduced to the following:

\begin{restatable}{theorem}{loc}
    We have a semiorthogonal decomposition for $\B G(\co)$:
\begin{equation*}
    \D(\B G(\co))=\rep(G(\co))=\langle  \D(\sym^{\lambda_1} D\times\cdots \times \sym^{\lambda_{r}} D)\rangle
\end{equation*}
where $D=\mathrm{Spec}(\co)$, $\lambda_i\in\mathbb{N}$.
\end{restatable}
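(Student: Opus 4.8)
The plan is to build the decomposition of $\rep(G(\co))$ inductively along the congruence filtration of the arc group $G(\co) = \varprojlim_m G(\co/t^m)$, reducing everything to representation theory of the finite-dimensional quotients and, ultimately, to the highest weight structure on polynomial representations of the current algebra $\g\otimes\co$. First I would recall the structure of $\rep(G(\co))$: every object is a union of representations of the finite-dimensional jet groups $G_m := G(\co/t^{m+1})$, so $\rep(G(\co)) = \varinjlim_m \rep(G_m)$ and correspondingly $\D(\B G(\co)) = \varinjlim_m \D(\B G_m)$ along the (fully faithful) inflation functors. Each $G_m$ is an iterated extension of the reductive quotient $G_0 = G$ by the unipotent congruence quotients, so $\rep(G_m)$ carries a highest weight structure indexed by $\Lambda^+\times(\text{jet data})$. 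The key geometric input, which is really the engine of the whole paper, is the identification of the "costandard" objects: for a dominant weight $\lambda$ and a choice of $\lambda_i$ marked points (with multiplicity) on the disc $D = \spec\co$, the relevant Weyl/costandard module should be realized as the pushforward of a line bundle along the Beilinson–Drinfeld-type Hecke/jet correspondence, and its endomorphism/$\Hom$ structure over $\B G(\co)$ should match $R\Gamma$ of the corresponding sheaf on $\sym^{\lambda_1}D\times\cdots\times\sym^{\lambda_r}D$. Concretely the global Weyl modules $W(\lambda)$ for $\g\otimes\co$ (à la Chari–Pressley, Chari–Loktev, Feigin–Loktev) have endomorphism rings exactly $\bigotimes_i \sym^{\lambda_i}\co = \Gamma(\co_{\sym^\lambda D})$, and $\rep(G(\co))$ is "freely generated" by them in a suitable sense.

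The key steps, in order, are: (1) Set up the semiorthogonal building blocks: define the Fourier–Mukai-type functors $\Phi_\lambda\colon \D(\sym^\lambda D)\to \D(\B G(\co))$ sending $\co$ (or a point sheaf) to the global Weyl module $W(\lambda)$, using the universal family of $G(\co)$-representations over the colored-divisor variety; verify $\Phi_\lambda$ is fully faithful by computing $\Hom_{\B G(\co)}(W(\lambda), W(\lambda)) = \End_{G(\co)}(W(\lambda)) = \Gamma(\sym^\lambda D, \co)$ and more generally that $\Phi_\lambda$ is the right-adjoint-section of a recollement. (2) Prove semiorthogonality: for $\mu \not\leq \lambda$ in the dominance/length order, $\mathrm{RHom}_{\B G(\co)}(W(\lambda), W(\mu)) = 0$; this is exactly the statement that global Weyl modules form an exceptional-type/highest-weight poset in $\rep(G(\co))$ — the "wrong direction" Exts vanish because $W(\mu)$ has all its weights $\leq \mu$ and $W(\lambda)$ is generated in weight $\lambda$, combined with the fact that the congruence-unipotent directions only raise the relevant grading. (3) Prove generation (the blocks span): show that the costandard objects $W(\lambda)$ (equivalently their duals) generate $\D(\B G(\co))$, by a dévissage along the congruence filtration — on each graded piece the unipotent congruence group acts through an abelian Lie algebra $\g\otimes t^m\co/t^{m+1}\co$ and one filters representations of $G_m$ by $G$-weights, matching symmetric powers of the cotangent directions, then passes to the colimit in $m$. (4) Assemble the recollement/semiorthogonal decomposition from (1)–(3) using that a collection of fully faithful functors with the right orthogonality and joint generation gives an SOD, and identify the ordering as "less dominant $\lambda$ to the left."

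The main obstacle I expect is step (3), the completeness/generation statement: one must show that the global Weyl modules (or a version of them, perhaps together with their "simple heads" controlling a filtration) really do generate the full derived category $\D(\B G(\co))$ and not merely a proper subcategory, uniformly as $m\to\infty$. The subtlety is that $G(\co)$ is pro-algebraic of infinite type, so one cannot simply quote finite-dimensional highest-weight category theory; one needs that the inductive system of SODs for $\D(\B G_m)$ is compatible with the transition functors and that nothing is lost in the colimit — in particular that the "jet multiplicity" data organizes precisely into the stratification of $\sym^\lambda D$ by diagonals, so that $\D(\sym^\lambda D)$ (rather than some larger category of $G_m$-equivariant sheaves on a jet scheme) is the correct block. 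I would handle this by making the bijection between (dominant weight + partition-valued divisor) and (irreducible $G(\co)$-representation) completely explicit via the Borel–Weil–Bott description of $\rep(G(\co))$ alluded to in the excerpt, and then checking that the filtration of each irreducible by standard/costandard objects has associated graded matching exactly the pushforwards from colored divisors — a bookkeeping argument, but one that must be done carefully to get the indexing set $\lambda_i\in\N$ with no constraints, as claimed.
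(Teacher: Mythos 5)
Your proposal follows essentially the same route as the paper: identify the global Weyl modules $W(\lambda)=\mathrm{Ind}_{H\ltimes N(\co)}^{G(\co)}(k_\lambda)$ as the standard objects, observe $\End(W(\lambda))\simeq A_\lambda = \bigotimes_i(\co^{\otimes\lambda_i})^{S_{\lambda_i}}=\Gamma(\sym^\lambda D,\co)$, and read off the semiorthogonal decomposition from the resulting highest-weight structure. Where you differ is in how the two hardest points are discharged. The paper packages your steps (1)--(2) into the abstract notion of a \emph{generalized highest weight category} (the axioms being precisely $\mathrm{Ext}^*(\Delta(\lambda),\Delta(\mu))=0$ unless $\lambda\leq\mu$ and $\mathrm{Ext}^*(\Delta(\lambda),\Delta(\lambda))$ concentrated in degree $0$), from which $\D(\C)=\langle\D(\End\Delta(\lambda))\rangle$ follows formally, and then simply \emph{cites} the established highest-weight structure on (graded) current-group representations for the content -- there is no from-scratch proof of generation in the paper. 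Your proposed dévissage along the congruence filtration $G(\co)=\varprojlim G(\co/t^m)$ is a genuinely different technical route to generation; it is plausible, but it carries the risk you yourself flag (losing objects in the colimit, or overcounting jet multiplicities into something bigger than $\D(\sym^\lambda D)$), and the literature's approach avoids it by working directly with the graded highest-weight structure, where standard objects generate by definition. You should also be aware of the $G[t]$-versus-$G[[t]]$ issue: the cited highest-weight results are stated for the polynomial current group $G[t]$, and the paper's bridge to $\co=k[[t]]$ is the compatibility of the Weyl-module functor with formal completion (established later in the section), not a jet-group inductive limit. Finally, be careful with the direction of semiorthogonality: with $W(\lambda)$ as standard objects one has $\mathrm{Ext}^*(W(\lambda),W(\mu))=0$ unless $\lambda\leq\mu$, i.e., the vanishing is for $\lambda\not\leq\mu$, which is the transpose of the way you wrote it; this only affects the ordering convention of the blocks, not the substance.
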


\begin{restatable}{theorem}{locc}
    We have a semiorthogonal decomposition for $\gr=G(\co)\backslash G(\mathcal{K})$:
\begin{equation*}
    \D(\gr)=\langle  \cdl^{\otimes k}\otimes \D(\sym^{\lambda_1} D\times\cdots \times \sym^{\lambda_{r}} D)\rangle
\end{equation*}
where $D=\mathrm{Spec}(\co)$, $k\in\mathbb{Z}$ and $\lambda_i\in\mathbb{N}$.
\end{restatable}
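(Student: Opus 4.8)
\medskip

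\emph{Proof strategy.} The plan is to deduce this from the decomposition $\D(\B G(\co))=\langle\D(\sym^\lambda D)\rangle$ of the previous theorem --- which already controls the ``level $0$'' direction --- by adjoining the determinant line bundle $\cdl$, using affine Borel--Weil--Bott for $G(\K)$ to handle the ``level'' direction. Recall that $\mathrm{Pic}(\gr)$ is generated by $\cdl$, that $\gr$ is exhausted by the spherical Schubert varieties $\overline{\mathrm{Gr}^\mu}$ ($\mu$ a dominant coweight) carrying a $G(\co)$-action, and that affine BWB identifies $\mathrm{R}\Gamma(\gr,\cdl^{\otimes k})$ with the restricted dual of the basic integrable $\widehat\g$-module $\mathbb L(k\Lambda_0)$ for $k\geq 0$ and with $0$ for $k<0$; more generally it computes the cohomology of all line bundles on the affine flag variety and on Bott--Samelson spaces in terms of Demazure modules, and for a negative power of $\cdl$ twisted by any $G(\co)$-equivariant coherent sheaf with bounded-below weights the cohomology on $\gr$ vanishes.

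First I would build, for each $(k,\lambda)\in\mathbb Z\times\Lambda^+$, a Fourier--Mukai functor $\Phi_{k,\lambda}\colon\D(\sym^\lambda D)\to\D(\gr)$ with kernel $\cdl^{\otimes k}$ tensored with the $G(\co)$-equivariant sheaf on $\gr\times\sym^\lambda D$ attached to the global Weyl module $W(\lambda)$ of the current algebra $\g(\co)$: here $\mathrm{End}_{G(\co)}(W(\lambda))\supseteq\co(\sym^\lambda D)$ supplies the $\sym^\lambda D$-direction, flatness of $W(\lambda)$ over $\co(\sym^\lambda D)$ (Chari--Pressley, Chari--Loktev) gives full faithfulness once the diagonal $\mathrm{R}\Hom$ is identified with $\mathrm{R}\Hom_{\sym^\lambda D}$, and at $k=0$ one recovers the blocks of the $\B G(\co)$ decomposition. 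The $(k,\lambda)$-block is then $\cdl^{\otimes k}\otimes\Phi_{0,\lambda}(\D(\sym^\lambda D))$.

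Semiorthogonality splits into two cases after adjunction. For two blocks at the same level, $\mathrm{R}\Hom$ reduces to $\mathrm{R}\Hom_{\B G(\co)}$ between the corresponding global Weyl sheaves, which vanishes in the ``less dominant to more dominant'' direction by the previous theorem. For blocks at different levels $k<k'$, the $\mathrm{R}\Hom$ from the level-$k'$ block to the level-$k$ block carries a negative twist $\cdl^{\otimes(k-k')}$, so it vanishes by the negative-level acyclicity recalled above; this is the loop-group analogue of the Bott vanishing behind Beilinson's collection on $\mathbb P^n$. Hence the stated order --- less dominant $\lambda$ and smaller $k$ come earlier --- and, $\gr$ being infinite-dimensional, no wrap-around occurs, so the whole $\mathbb Z\times\Lambda^+$-indexed family forms an honest infinite semiorthogonal family; the finite windows $0\le k<2\h$ of the earlier theorems appear only when one later restricts to the semistable locus.

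The remaining and genuinely hard step is generation, i.e.\ that the $\Phi_{k,\lambda}$ split-generate $\D(\gr)$. I would proceed along the Schubert exhaustion: it suffices to resolve, on each $\overline{\mathrm{Gr}^\mu}$, a generating set (say the restrictions of the $\cdl^{\otimes k}$) in terms of the blocks. Taking a Bott--Samelson/Demazure resolution $\widetilde{\mathrm{Gr}}^\mu\to\overline{\mathrm{Gr}^\mu}$ --- an iterated $\mathbb P^1$-bundle whose derived category is split-generated by line bundles via Orlov's projective-bundle decomposition --- one pushes those line bundles down, computes the pushforwards by affine BWB (they are Demazure modules), and filters them using the current-algebra highest-weight structure of the previous theorem so that the subquotients lie in the blocks; since $\overline{\mathrm{Gr}^\mu}$ has rational singularities the pushforwards split-generate $\D(\overline{\mathrm{Gr}^\mu})$. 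I expect the main obstacle to be exactly this step in the ind-limit: one must bound the length and the set of $\lambda$'s occurring in these resolutions in terms of $\mu$, check that the expressions for different $\mu$ are mutually compatible so that the colimit forces no new generator, and track the central/metaplectic extension carefully so that the negative powers $\cdl^{\otimes k}$ (which are acyclic) still organize into honest blocks. This is precisely where the strong form of affine Borel--Weil--Bott and the local--global compatibility of Kac--Moody localization advertised in the introduction do the real work, rather than their classical finite-dimensional shadows.
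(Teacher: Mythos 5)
Your treatment of semiorthogonality matches the paper: the Fourier--Mukai kernels are (tensor powers of $\cdl$ times) the global Weyl modules, intra-level $\mathrm{R}\Hom$'s are computed by the level-$0$ induction functor, i.e.\ by group cohomology of $G(\co)$ (reducing to the $\B G(\co)$ theorem), and the inter-level vanishing for a negative twist $\cdl^{\otimes(k-k')}$ is exactly the affine Borel--Weil--Bott vanishing. No issues there.

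The place where you diverge is generation, and it is also where your sketch has an acknowledged gap. You propose to exhaust $\gr$ by spherical Schubert varieties $\overline{\mathrm{Gr}^\mu}$, resolve each by a Bott--Samelson tower, split-generate the tower by line bundles via the $\mathbb P^1$-bundle decomposition, push forward to Demazure modules, and then filter those by the current-group highest-weight structure --- and you correctly flag that the hard part is passing to the colimit: bounding lengths, checking mutual compatibility across $\mu$, and controlling the central extension so the negative powers organize into blocks. The paper avoids all of these issues by taking a different route entirely: it views $\gr=\mathbb G/\mathbb P$ as a partial flag variety of the Kac--Moody group $\mathbb G=\gm\ltimes\widehat{G(\mathcal K)}$ with $\mathbb P=\gm\ltimes G(\co)\times\gm$, invokes the Kac--Peterson Peter--Weyl theorem to conclude that the coordinate ring $\Gamma(\mathbb G,\co)$ lies in $\mathrm{Pro}(\rep\,\mathbb P)$, and then observes that the relative spectrum of this pro-algebra over $\B\mathbb P$ is $\mathbb G/\mathbb P\to\B\mathbb P$, so the pullbacks of finite-dimensional $\mathbb P$-representations --- precisely the blocks $\cdl^{\otimes k}\otimes\D(\sym^\lambda D)$, since the extra $\gm\subset\mathbb P$ contributes no nontrivial bundle --- generate $\D(\gr)$. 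This is both more conceptual and genuinely complete: it never descends to Schubert cells, so there is no colimit compatibility to check and no bookkeeping of which $\lambda$ appear at which $\mu$. Your Demazure-module approach could likely be pushed through for each individual $\overline{\mathrm{Gr}^\mu}$ (it is in the spirit of the affine analogue of Samokhin's highest-weight decompositions that the paper alludes to in Section~4), but you have correctly identified that as written it is not a proof of generation in the ind-limit; if you want to complete your route you should at minimum establish the $\mu$-to-$\mu'$ compatibility of the Bott--Samelson resolutions, whereas if you adopt the paper's Peter--Weyl argument that entire step collapses to a single citation.
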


It is enlightening to notice that $\B G(\co)$ is the moduli of $G$ bundle on $D=\mathrm{Spec}(\co)$ while $\gr$ is the moduli of $G$ bundle on $D=\mathrm{Spec}(\co)$ equipped with a trivialization on $D^\circ=\mathrm{Spec}(\mathcal{K})$, thus Theorems 7,8 can be viewed as local analogues of Theorem 5 where we take $X=D$ with only ceveat that in the non compactly-supported setting the determinant line bundle it not defined. Theorem 7 follows from the highest weight structure of $\rep(G(\co))$ and Theorem 8 follows from the affine Borel-Weil-Bott theorem, the triviality of loop group representation at level $0$ and Theorem 6.

We now address the construction of functors between the two sides. A natural choice is to use tautological vector bundles as Fourier–Mukai kernels. This works particularly well for classical groups, say $\mathrm{SL}_{n}$. The tautological bundles naturally live on powers $X^n\times \bun_{n}^L$ and descend to the stacky quotient $X^n/S_n\times \bun_{n}^L$, but not on (products of) symmetric powers $\sym^n X$. A key observation is that $\sym^\lambda X=\sym^{\lambda_1} X\times\cdots \times \sym^{\lambda_{r}} X$ appears as a stacky stratum of $X^{|\lambda|}/S_{|\lambda|}$ where $|\lambda|=\sum i\lambda_i$, Polishchuk and Van den Bergh \cite{polishchuk2019semiorthogonal} shows this stratification gives a semiorthogonal decomposition of the quotient stack. Consequently the right-hand side of Theorem 7 also appears in $\D(D^n/S_n)$, and Theorem 7 furnishes a relation between $\rep(G(\co))$ and $\D(D^n/S_n)$. 

Let us first consider another local example $X=\mathbb{G}_m$, then the left hand side $\D({\bun}_{r}^{L})=\rep(G(\mathbb{G}_m))$ is the representation of (a version of) affine Lie group and $\D(\mathbb{G}_m^n/S_n)=\rep (S_n\ltimes \mathbb{Z}^n)$ is the representation of affine Weyl group. This parallel strongly suggests an affine Schur–Weyl duality connecting the two. Replacing $\mathbb{G}_m$ by $D$ leads to a close cousin of affine Schur-Weyl duality named Schur-Weyl duality for current algebras/groups. In the literature, these correspondences are typically established only at the abelian level and do not automatically lift to the derived setting as the representation category is not semisimple. Using the highest-weight structures on both sides, we prove a derived Schur-Weyl duality for current group which implies Theorem 7 in the $n\rightarrow \infty$ limit.

\begin{restatable}{theorem}{fm}
    The Fourier-Mukai transform associated with tautological bundle on $D^n/S_n\times \bun_{n}^L$ restricts to an embedding on the admissible subcategory of $\D(D^n/S_n)$ generated by $\D(\sym^{\lambda_1} D\times\cdots \times \sym^{\lambda_{r}} D)$ to $\D(\bun_{n}^L)=\rep \,G(\co)$ which exhaust the right hand side in the $n\rightarrow\infty$ limit.
\end{restatable}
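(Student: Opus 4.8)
The plan is to recognize this Fourier--Mukai transform, restricted to the relevant admissible subcategory, as a derived Schur--Weyl functor for the current group $G(\co)$, to identify it with a functor of highest weight categories sending the distinguished generators to global Weyl modules, and then to let $n\to\infty$ using Theorem 7. First I would fix the kernel: the rank-$n$ tautological bundle $\mathcal{T}$ on $X\times\bun_n^L$, pulled back along the $n$ projections and externally tensored, is an $S_n$-equivariant bundle on $X^n\times\bun_n^L$ which descends to $X^n/S_n\times\bun_n^L$, and its restriction to $D^n/S_n\times\bun_n^L$ is the kernel, with transform $\Phi_n\colon\D(D^n/S_n)\to\D(\bun_n^L)=\rep\,G(\co)$. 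On the source, \cite{polishchuk2019semiorthogonal} decomposes $\D(D^n/S_n)$ semiorthogonally into pieces $\D(\sym^\lambda D)$ indexed by colored divisors of total degree $n$, and I take $\mathcal{A}_n$ to be the admissible subcategory generated by those pieces whose colors lie in the range occurring in Theorem 7, so that $\mathcal{A}_n$ is assembled from exactly the same blocks $\D(\sym^\lambda D)$ as $\rep\,G(\co)$. Both sides then carry matching filtrations by admissible subcategories indexed by the dominant weights under dominance: on $\rep\,G(\co)$ this is the highest weight structure recalled in the introduction, with standard objects the global Weyl modules $W(\lambda)$ and successive quotient $\D(\sym^\lambda D)$; on $\mathcal{A}_n$ it is the filtration of \cite{polishchuk2019semiorthogonal}, whose standard $\Delta_\lambda$ is the pushforward of (an appropriate twist of) the structure sheaf along the locally closed stratum $\sym^\lambda D\hookrightarrow D^n/S_n$.

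The key point is the computation $\Phi_n(\Delta_\lambda)\simeq W(\lambda)$, up to the expected cohomological degree and determinant twist: pushing $\mathcal{T}^{\boxtimes n}$ forward along $\sym^\lambda D\times\bun_n^L\to\bun_n^L$ and base-changing expresses the answer as the suitably symmetrized space of sections of tensor powers of $\mathcal{T}$ over the colored configuration, which is exactly the Chari--Loktev and Feigin--Loktev style presentation of the global Weyl module $W(\lambda)$. Hence $\Phi_n$ sends standards to standards; after checking that it is exact for the two highest weight $t$-structures on the pieces and strictly compatible with the dominance order (so that it also sends costandards to costandards), its restriction to each piece $\D(\sym^\lambda D)\subseteq\mathcal{A}_n$ agrees, on the associated graded of the filtration, with the structural embedding of the corresponding block of $\rep\,G(\co)$ in Theorem 7.

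Granting this, full faithfulness of $\Phi_n$ on $\mathcal{A}_n$ reduces to showing that the natural maps
\[
\mathrm{Ext}^{\bullet}_{D^n/S_n}(\Delta_\lambda,\Delta_\mu)\;\longrightarrow\;\mathrm{Ext}^{\bullet}_{G(\co)}\bigl(W(\lambda),W(\mu)\bigr)
\]
are isomorphisms for all admissible $\lambda,\mu$ (and likewise on the simple and projective objects of the two structures); since each category is generated under shifts and cones by its standard objects, this then propagates to full faithfulness on all of $\mathcal{A}_n$. This $\mathrm{Ext}$-comparison is the derived upgrade of the classical \emph{abelian} Schur--Weyl duality for current algebras, and it is where I expect the main obstacle to lie: the classical duality only controls $\Hom$-spaces, so one must either exhibit an explicit ``current Schur algebra'' presenting the endomorphisms of the direct sum of standards and check it agrees on the two sides --- a finite but delicate calculation tracking how the gluing morphisms of \cite{polishchuk2019semiorthogonal} interact with the defining relations of the Weyl modules --- or argue more softly that $\Phi_n$ is Ringel self-dual and an equivalence onto its image by a $t$-exactness and composition-length count. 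A further technical point is to pin down $\rep\,G(\co)$ and its bounded derived category for the pro-algebraic group $G(\co)$ (ind-coherent sheaves on $\B G(\co)$, with the quasi-hereditary structure furnished by Theorems 7 and 8); this, together with the behaviour of the determinant line bundle and the smoothness of $X$, is what legitimizes the identification $\D(\bun_n^L)=\rep\,G(\co)$ used throughout.

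Finally, since $\Phi_n$ carries each piece $\D(\sym^\lambda D)\subseteq\mathcal{A}_n$ isomorphically onto the corresponding block of the semiorthogonal decomposition of $\rep\,G(\co)$, and since as $n\to\infty$ every colored divisor type --- hence every block occurring in Theorem 7 --- is eventually reached, the essential images $\Phi_n(\mathcal{A}_n)$ exhaust $\rep\,G(\co)$. Compatibility of the $\Phi_n$ as $n$ grows is automatic from the degree-block structure on $\rep\,G(\co)$ (the degree of $W(\lambda)$ being the total degree of the colored divisor of type $\lambda$), which $\Phi$ preserves, so the union is increasing and the $n\to\infty$ statement follows.
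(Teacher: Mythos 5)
The paper does not actually prove this theorem in the present text: the introduction states that the Schur--Weyl route was carried out in an earlier arXiv version (\cite{xu2023moduli}) for type $A$, and that the current paper deliberately substitutes the global Weyl module construction so as to avoid Schur--Weyl duality and the Polishchuk--Van den Bergh decomposition entirely. So there is no in-paper proof of Theorem~9 to compare against; the only guidance the paper gives is the one-line sketch ``use the highest-weight structures on both sides to upgrade abelian Schur--Weyl to a derived statement,'' and your outline reproduces that sketch accurately: identify the kernel as the descended $\mathcal{T}^{\boxtimes n}$, decompose $\D(D^n/S_n)$ via \cite{polishchuk2019semiorthogonal} into blocks $\D(\sym^\lambda D)$, match those with the highest-weight filtration on $\rep G(\co)$, and let $n\to\infty$.

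The two points you yourself flag are, however, exactly where the proof has to happen, and as written neither is established. First, the identification $\Phi_n(\Delta_\lambda)\simeq W(\lambda)$ has to be an isomorphism of $(A_\lambda, G(\co))$-bimodules, matching the $\co(\sym^\lambda D)$-module structure that Polishchuk--Van den Bergh put on the standard $\Delta_\lambda$ with the $A_\lambda$-action on $W(\lambda)$ from \cite{chari2010categorical}; appealing to the ``Chari--Loktev / Feigin--Loktev style presentation'' names the expected answer but does not compute the pushforward or verify the two module structures coincide. Second, and more seriously, the derived upgrade: in a highest weight category the groups $\mathrm{Ext}^i(\Delta_\lambda,\Delta_\mu)$ are typically nonzero for $\lambda<\mu$ and $i>0$, and knowing that $\Phi_n$ sends standards to standards with matching degree-zero endomorphism algebras $A_\lambda$ gives no control over these higher Ext comparison maps. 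Your two proposed remedies --- an explicit ``current Schur algebra'' presentation of $\mathrm{End}(\bigoplus\Delta_\lambda)$ on both sides, or $t$-exactness plus a costandard-to-costandard check --- are the right kind of thing to try, but neither is carried out, and the second in particular presupposes a compatibility between Ringel duality on $\D(D^n/S_n)$ and on $\rep G(\co)$ that would itself need proof. Until one of these gaps is closed the argument shows at most that $\Phi_n$ is full on the standard objects, not that it is an embedding of the admissible subcategory.
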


This method was applied in an earlier version of the current paper \cite{xu2023moduli} to prove the main theorems for type A (and straightforwardly generalizes to other classical groups). The machinery of global Weyl modules \cite{khoroshkin2013highest,bennett2014macdonald,chari2015bgg} furnishes a parallel proof that avoids explicit use of Schur–Weyl duality and the stacky symmetric-power decomposition. These modules are the standard objects in the highest weight category $\rep(G(\co))$, whose highest weight structures are induced from $\rep\, G$. It is known that this construction is functorial, and we show that it can be glued from affine charts to a globally defined functor on products of symmetric powers. This is precisely the functor needed for the semiorthogonal decomposition; once it is in place, semiorthogonality follows from Borel–Weil–Bott for loop groups together with local calculations for global Weyl modules.

\subsection{Acknowledgements}

The author is grateful to S.-T. Yau for guidance and support over the years; to L. Chen and Y. Fu for teaching him geometric representation theory; to J. Tevelev for many helpful comments on the first draft, and to D. Auroux, R. Fujita, D. Halpern-Leistner, H.-B. Moon, and D. Ben-Zvi for helpful discussions. 

% The author is grateful for S.-T. Yau for guidance and supports over years, for L. Chen and Y. Fu for teaching me geometric representation theory, and for D. Auroux, R. Fujita, D. Halpern-Leistner  H.-B. Moon, D. Ben-Zvi for helpful discussions.

% Yau Chen Fu Hambooo Moon Ben Zvi ryu fujita daniel halpern leistner (Popa Maulik Toda gaitsgory huybrechts) auroux

\section{Moduli of twisted bundles}

Throughout this paper, we fix an algebraically closed base field of characteristic $0$.\footnote{All of our constructions are canonical and Galois-equivariant, hence straightforwardly descend to non-closed fields.} Let $G$ be a simple algebraic group, $X$ be a smooth projective curve and $\bun_G=\map(X,\mathrm{B}G)$ be the moduli stack of $G$-bundles over $X$. Let $Z=Z$ be the center of $G$, $X,Y$ be the weight and root lattices, the $Z$ gerbes are classifield by $\map (X,\B^2Z)$, by Poincare duality this is isomorphic to $ \Hom( \map (X,\Hom(Z,\gm)),\mathbb{Q}/\mathbb{Z})$. Note that $\Hom(Z,\gm)\simeq Q$ is the quotient of root lattice of $G$ by weight lattice of $G$, we have a natural $\mathbb{Q}$-valued inner product on the weight lattice, such that root lattice is dual to weight lattice, so we get a $\mathbb{Q}/\mathbb{Z}$ valued bilinear pairing on their quotient $Q$ and this is a perfect pairing. Hence it induces a natural isomorphism $Q\simeq\Hom(Q,\mathbb{Q}/\mathbb{Z})$, and the $Z$ gerbes are classified by the homology of $X$ with coefficient $Q$, i.e. the isomorphism classes are classified by $Q$, the automorphisms are $H_1(X, Q)$ and the $2$-automorphism are $H_2(X, Q)$. Let $\ga=G/Z$ be the adjoint group of $G$, given a $Z$-gerbe $\xi$, we define the moduli of $\xi$-twisted $G$ bundle $\bun^\xi_G$ to be the fiber of $\map(X,\B\ga)\longrightarrow\map(X,\B^2Z)$ at $\xi$, which recover $\bun_G$ when $\xi$ is the trivial gerbe.\footnote{Note that $\map(X,\B^2Z)$ is a finite abelian group stack with nontrivial $\pi_0,\pi_1,\pi_2$ and we cannot replace it by $H^2(X,Z)$.}  Let $\bun_G^{\xi,s}$ (resp, $\bun_G^{\xi,ss}$) be the open substack of stable (resp, semistable) twisted bundles. For $G=\slr$, we have an equivalent description $\bun^\xi_G\simeq \bun_{n}^L$ as the moduli stack of rank $n$ vector bundles $E$ with an isomorphism $\det E \simeq L$ for some fixed $L$ whose degree equals to $\xi$ in $Q\simeq \mathbb{Z}/(r+1)\mathbb{Z}$.\footnote{Throughout the paper we use the étale topology on $X$.}. The equivalence follows from this Cartesian diagram of classifying stacks:

\[ \begin{tikzcd}
\B \mathrm{GL}_{n} \arrow{r} \arrow[swap]{d} & \B \mathbb{G}_m \arrow{d} \\%
\B \mathrm{PGL}_{n} \arrow{r}& \B^2 \mu_{n}
\end{tikzcd}
\]
as $\bun^\xi_G$ is the fiber of the bottom horizontal arrow after applying $\map (X,-)$ while $ \bun_{n}^L$ is the fiber of the top one. When $(n,\deg L)=1$, it's well known that the coarse moduli of stable bundles $Bun_{n}^{L,s}$ is a smooth projective variety and the moduli stack of stable bundles $\bun_{n}^{L,s}$ is a $Z$ gerbe over the coarse moduli $Bun_{n}^{L,s}$. 

Weil uniformization theorem plays a key role in the study of $\bun_G$. For our purpose we need to generalized it to $\bun_G^\xi$. Recall the (untwisted version of) Weil uniformization \cite{etde_20831639} states that 

\begin{restatable}{theorem}{weil}[Untwisted Weil uniformization]
    $$\bun_G=\prod_{x\in X} G(\co_x)\backslash \prod_{x\in X} G(\K_x)/G(K_X)$$
    where $\co_x$ means the completion of coordinate ring at $x$, $\K_x$ means its fraction field and $\prod$ means restricted product, i.e. all but finitely many factors are in $G(\co_x)$.
\end{restatable}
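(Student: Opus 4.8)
The plan is to prove this by descent along the affine Grassmannian fibration, following the classical argument (see \cite{etde_20831639}) but phrased in the stacky language used throughout. First I would reduce to a statement about trivializations. A $G$-bundle on $X$ is Zariski-locally (for $\mathrm{GL}_n$) or étale-locally (in general) trivial; by a theorem of Drinfeld--Simpson one can in fact trivialize any $G$-bundle on the complement $X\setminus S$ of a finite set of points $S$. So the groupoid of $G$-bundles is equivalent to the groupoid of pairs $(E, \tau)$, where $E$ is a $G$-bundle and $\tau$ is a trivialization of $E|_{X\setminus S}$ for some finite $S$, modulo change of trivialization; the change of trivialization is precisely the action of $G(K_X)$ on the right (a rational trivialization differs from another by an element of $G(K_X)$, where $K_X$ is the function field). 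This is where the quotient by $G(K_X)$ on the right comes from.

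Next I would analyze the gluing data near the points of $S$. Given a $G$-bundle $E$ trivialized away from $S$, the bundle is determined by how the trivial bundle on $X\setminus S$ is reglued to a bundle on the formal (or henselian) neighborhoods of the points of $S$. Concretely, applying the Beauville--Laszlo gluing lemma at each $x\in S$, the datum of $E$ together with its rational trivialization is equivalent to the datum, for each $x\in S$, of a $G$-bundle on $\spec\co_x$ together with an identification of its restriction to $\spec\K_x$ with the trivial bundle. Since every $G$-bundle on $\spec\co_x$ is trivial (as $\co_x$ is a complete DVR and $G$ is smooth, by Hensel's lemma plus the vanishing of $H^1$ of the residue field, or simply because $\spec\co_x$ retracts to its closed point), such a datum is an element of $G(\co_x)\backslash G(\K_x)$; varying over all finite $S$ and all points gives exactly the restricted product $\prod_{x\in X} G(\co_x)\backslash\prod_{x\in X} G(\K_x)$, where ``restricted'' encodes that $E$ is genuinely untwisted (hence trivial) at all but finitely many points. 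Assembling the two analyses, the groupoid of $G$-bundles is the double quotient $\prod_x G(\co_x)\backslash \prod_x G(\K_x)/G(K_X)$, and one checks the $2$-categorical structure matches (automorphisms of a bundle correspond to the stabilizer, which is $G(X)=G$ for the trivial bundle and in general a twisted form thereof, matching the stabilizers in the double coset).

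I would then upgrade this groupoid statement to an equivalence of stacks: the above must be made functorial in $S$-points for an arbitrary test scheme $T$, i.e.\ for families of $G$-bundles on $X\times T$. The same Drinfeld--Simpson trivialization theorem works in families (after an fppf or étale cover of $T$ and enlarging the divisor), and Beauville--Laszlo gluing is compatible with base change, so the argument sheafifies; the only subtlety is to present the right-hand side as an actual ind-stack (the restricted product is a filtered colimit over finite subsets $S$ of the quotients $\prod_{x\in S}G(\co_x)\backslash \prod_{x\in S}G(\K_x)$, each of which is an ind-scheme, quotiented by $G(K_X)$), and to verify the colimit commutes with the quotient.

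The main obstacle I expect is the honest families version of the trivialization theorem: showing that every $G$-bundle on $X\times T$ becomes, after an appropriate cover of $T$, trivial on the complement of a relative effective Cartier divisor finite over $T$. For $\mathrm{GL}_n$ this is elementary (generic triviality plus clearing denominators), but for general semisimple $G$ this is exactly the content of Drinfeld--Simpson, and its proof uses the semisimplicity of $G$ in an essential way; I would cite it rather than reprove it. A secondary technical point is bookkeeping the ind-structure so that the equivalence is of prestacks in the correct $\infty$-categorical sense, but this is routine given the groupoid-level statement.
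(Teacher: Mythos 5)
Your proposal follows the same line of argument that the paper indicates for this theorem: the paper defers the untwisted statement to Sorger's notes \cite{etde_20831639} and, when it later unwinds the proof to handle the twisted case, uses exactly the ingredients you list — Drinfeld--Simpson generic trivialization, Beauville--Laszlo gluing along formal discs (which the paper's Remark flags as necessary because the formal disc is not flat over a general base, so fpqc descent alone is insufficient), the identification of the local data with the affine Grassmannian $G(\co_x)\backslash G(\K_x)$, and the observation that changes of rational trivialization give the $G(K_X)$ quotient. Your closing paragraph correctly identifies the relative Drinfeld--Simpson trivialization over a test scheme $T$ as the one genuinely nontrivial input, and citing it (rather than reproving it) matches the paper's treatment.

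One small remark: you say $G$-bundles on $X$ are ``étale-locally trivial in general'' before invoking Drinfeld--Simpson. For semisimple $G$ over a curve, Drinfeld--Simpson actually gives Zariski-local triviality outright, which is what makes the argument work with an honest Zariski open $X\setminus S$ rather than an étale cover of $X$; be careful to use that stronger form, since a merely étale-local trivialization of the bundle would not directly produce a rational trivialization and hence not directly the $G(K_X)$-action.
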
  

In the twisted version of Weil uniformization we need to twist the action of $G(K_X)$ by an outer automorphism determined by $\xi$ (as we define later):
\begin{restatable}{theorem}{tweil}[Twisted Weil uniformization]
$$\bun_G^\xi=\prod_{x\in X} G(\co_x)\backslash \prod_{x\in X} G(\K_x)/G(K_X)^\xi$$
\end{restatable}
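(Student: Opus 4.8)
The plan is to deduce the twisted statement from the untwisted one by a descent/gerbe-twisting argument applied to the Cartesian square relating $\mathrm{B}G$, $\mathrm{B}G^{ad}$ and $\mathrm{B}^2Z$. First I would recall the fiber-sequence definition: $\bun_G^\xi = \map(X,\mathrm{B}G^{ad}) \times_{\map(X,\mathrm{B}^2Z)} \{\xi\}$. The key input is that the untwisted Weil uniformization, Theorem \ref{weil}, is really a statement about $\map(X,\mathrm{B}G)$ for \emph{all} reductive $G$ (in particular for $G^{ad}$), so that $\map(X,\mathrm{B}G^{ad})$ is the double coset $\prod_x G^{ad}(\co_x)\backslash\prod_x G^{ad}(\K_x)/G^{ad}(K_X)$. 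The map to $\map(X,\mathrm{B}^2Z)$ is then induced by the connecting map for the central extension $1\to Z\to G\to G^{ad}\to 1$; on adèlic points this is the boundary map in non-abelian cohomology, and the content is to identify the fiber at a fixed class $\xi$ with the asserted twisted double coset.

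The main steps, in order: \textbf{(1)} Apply $\map(X,-)$ to the Cartesian square of classifying stacks (the analogue of the $\B\mathrm{GL}_n / \B\mathrm{PGL}_n$ square displayed above, but for general $G$ via $1\to Z\to G\to G^{ad}\to 1$), obtaining $\bun_G^\xi$ as a fiber product over $\map(X,\mathrm{B}^2Z)$. \textbf{(2)} Uniformize the adjoint side: write $\map(X,\mathrm{B}G^{ad})$ as the adèlic double coset, and observe that the restricted product $\prod_x G^{ad}(\K_x)$ and the group $G^{ad}(K_X)$ both act, and the connecting homomorphism to $H^2(X,Z)$-type data factors through the Hasse-principle exact sequence $1 \to H^1(K_X,Z) \to \prod_x{}' H^1(\K_x,Z) / (\text{local units}) \to \ldots$; concretely, a class $\xi$ is realized by choosing local trivializations which differ on overlaps by a $Z$-valued Čech cocycle. \textbf{(3)} Since $Z$ is finite (central), such a cocycle is a locally constant $G$-valued (indeed $Z$-valued) cochain on the function field, and conjugation by it is an \emph{outer} automorphism of the $K_X$-group $G(K_X)$ precisely because it is valued in the center modulo inner; this defines $G(K_X)^\xi$. \textbf{(4)} Fiber-chasing: the fiber of the boundary map at $\xi$ is the set of adèlic $G^{ad}$-bundles that lift to a $G$-bundle after twisting the gluing by $\xi$; unwinding, this is exactly $\prod_x G(\co_x)\backslash\prod_x G(\K_x)/G(K_X)^\xi$, where the twist on the $G(K_X)$-factor records the cocycle $\xi$. \textbf{(5)} Check this is independent of the cocycle representative of $\xi$ (different representatives give isomorphic twisted double cosets via the $H^1(X,Q)$-worth of automorphisms), and that it recovers Theorem \ref{weil} when $\xi$ is trivial.

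The hard part will be step (3)--(4): making precise the sense in which a $Z(G)$-gerbe class produces a \emph{well-defined} outer automorphism of $G(K_X)$ (rather than merely an adèlic cocycle ambiguity), and verifying that the resulting twisted quotient is insensitive to all the choices. One must be careful that $\map(X,\mathrm{B}^2Z)$ is a $2$-group with nontrivial $\pi_0,\pi_1,\pi_2$ — so ``fixing $\xi$'' means fixing an object of a groupoid, and the fiber inherits the $\pi_1 = H^1(X,Q)$-action, which is exactly why the twisted double coset is only canonical up to that action. I would handle this by working at the level of the full mapping \emph{stack} to $\mathrm{B}^2 Z$ and taking homotopy fibers, rather than chasing cocycles by hand; then the untwisted theorem is the $\xi = 0$ case and the general case follows by the torsor-twisting formalism for homotopy fibers of maps to $\mathrm{B}^2 Z$. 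A cleaner alternative, which I would also sketch, is to reduce directly to $\mathrm{GL}_n$ via a faithful representation $G\hookrightarrow\mathrm{GL}_n$ and the displayed $\B\mathrm{GL}_n/\B\mathrm{PGL}_n$ square, where the outer twist is visibly ``change of determinant line'', and then descend back to $G$ using that $Z(\mathrm{GL}_n)\cap G = Z(G)$ sits compatibly.
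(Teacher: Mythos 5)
Your skeleton matches the paper's first proof --- realize $\bun_G^\xi$ as the homotopy fiber of $\map(X,\B\ga)\to\map(X,\B^2Z)$, uniformize, and identify the fiber over $\xi$ --- but step~(3) contains a genuine error. You describe the outer automorphism defining $G(K_X)^\xi$ as ``conjugation by a $Z$-valued cochain.'' Since $Z$ is the center of $G$, conjugation by anything $Z$-valued is the identity, so this cannot produce a nontrivial twist. What the paper does instead is prove a purely formal lemma: for a diagram of group stacks with horizontal fiber sequences $H'\to H\to H''$, $G'\to G\to G''$, $K'\to K\to K''$, the fiber of $H\backslash G/K\to H''\backslash G''/K''$ over a point $g''\in G''$ is $H'\backslash G'/(K')^{g''}$, where the conjugation is by a preimage $g$ of $g''$ in $G$, not by anything in $Z$. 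In the application the preimage $g$ lives in the adelic group $\prod_x\ga(\K_x)$; the ambiguity in choosing it lies in $\prod_x G(\K_x)$, so $(K')^{g''}=G(K_X)^\xi$ is well-defined only up to conjugacy, which is exactly enough to make the double coset well-defined. This is the precise resolution of the independence-of-choices issue you correctly flag as ``the hard part,'' and it shows the twist is by an adelic element of the adjoint group, not by the $Z$-valued cocycle representing $\xi$.

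You also do not supply the paper's key preparatory ingredient: a Weil uniformization of $\map(X,\B^2Z)$ itself, namely $\map(X,\B^2Z)=\prod_x\B Z(\co_x)\backslash\prod_x\B Z(\K_x)/\B Z(K_X)$, which holds because $Z$-gerbes on a curve are Zariski-locally trivial (the relevant $H^2$ vanishes on affine curves), in parallel with Drinfeld--Simpson for $G$-bundles. Your ``Hasse-principle exact sequence'' in step~(2) gestures toward a similar local-global reduction, but the paper's double-coset uniformization of the gerbe side is what makes the formal lemma directly applicable: with both $\map(X,\B\ga)$ and $\map(X,\B^2Z)$ written as double cosets and the placewise fiber sequences $G(\K_x)\to\ga(\K_x)\to\B Z(\K_x)$ in hand, the twisted uniformization drops out in one line. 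Your instinct in the final paragraph to work at the level of homotopy fibers rather than chasing cocycles is exactly right and is in essence what the paper's lemma encodes; what is missing is the gerbe-side uniformization that makes it bite, together with the correct identification of the group in which the conjugating element lives.
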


We reduce the twisted version to the usual one: we have $$\map(X,\B \ga)=\bun_{\ga}=\prod_{x\in X} \ga(\co_x)\backslash \prod_{x\in X} \ga(\K_x)/\ga(K_X)$$ We claim that there is a similar uniformization theorem for $\map(X,\B^2Z)$. First let us recall that classical Weil uniformization follows from the Drinfeld-Simpson theorem \cite{drinfeld1995b} that all $G$ bundles over curves are  Zariski-locally trivial. This local triviality also applies to $Z$ gerbes, as they are classified by the second cohomology group, which vanishes for affine curves. Thus we obtain the uniformization of $Z$ gerbes $$\map(X,\B^2Z)=\prod_{x\in X} \B Z(\co_x)\backslash \prod_{x\in X} \B Z(\K_x)/\B Z(K_X)$$
\begin{rmk}
    Technically speaking in the proof of uniformization we also need Beauville-Laszlo theorem for descent as the disc is not flat over a general (non-Noetherian) base ring $R$ and fpqc descent is not applicable. The corresponding fact for $\B^2 Z$ this is much easier as both sides of the uniformization are obviously of (ind-)finite type and we only need to prove the isomorphism on $R$ points for finite type algebra $R$, which follows form fpqc descent.
\end{rmk}

To prove the uniformization of $\bun^\xi_G$, we need the following general fact:

\begin{lem}
    Given the following diagram of group stacks:
    
\[ \begin{tikzcd}
H' \arrow{r} \arrow{d} & H  \arrow{r} \arrow{d}& H''\arrow{d}\\%
G' \arrow["f"]{r}  & G  \arrow["g"]{r} & G''\\%
K' \arrow{r} \arrow{u} & K  \arrow{r}\arrow{u} & K''\arrow{u}
\end{tikzcd}
\]
if the horizontal maps are fiber sequences, fiber $F_{g''}$ of the natural map $H\backslash G/K\longrightarrow H''\backslash G''/K''$ at a point $H''g''K''\in H''\backslash G''/K''$ is canonically isomorphic to $ H'\backslash G'/(K')^{g''}$.
\end{lem}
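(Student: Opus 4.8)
The plan is to identify both double-coset stacks with fiber products of classifying stacks and then compute the fiber directly. I would start from the standard equivalence
\[ H\backslash G/K\;\simeq\;\B H\times_{\B G}\B K, \]
valid for any homomorphisms of group stacks $H\to G\leftarrow K$: an $S$-point of $H\backslash G/K$ is a $G$-torsor $\mathcal G$ on $S$ equipped with reductions of structure group to $H$ and to $K$ (i.e. $H$- and $K$-torsors $\mathcal H,\mathcal K$ with identifications $\mathcal H\times^H G\simeq\mathcal G\simeq\mathcal K\times^K G$), and likewise for the double-primed data. The map in the statement is the one induced by the three vertical maps of the diagram, and the point $H''g''K''$ is represented by the $S$-point $(\,G''_S,\ H''_S,\ g''\!\cdot\!K''_S\,)$ of $H''\backslash G''/K''$ — the trivial $G''$-torsor with its standard $H''$-reduction and the $g''$-translate of its standard $K''$-reduction.

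With this in place, an $S$-point of $F_{g''}$ is a triple $(\mathcal G,\mathcal H,\mathcal K)$ over $S$ together with an identification of its image in $H''\backslash G''/K''$ with the point above, and I would unwind this in three steps. First, a trivialization of $\mathcal G\times^G G''$ is — because $G'\xrightarrow{f}G\xrightarrow{g}G''$ is a fiber sequence, so that $\B G'\simeq\mathrm{pt}\times_{\B G''}\B G$ — the same datum as a reduction $\mathcal G'$ of $\mathcal G$ to $G'$. Second, requiring the induced $H''$-reduction $\mathcal H\times^H H''$ to be the standard one forces, via the commuting square $H\to H''$, $H\to G\to G''$, $H''\to G''$ (which makes $H'\to G''$ canonically null), a reduction of $\mathcal H$ to $H'=\mathrm{fib}(H\to H'')$, and the resulting $H'$-torsor is canonically an $H'$-reduction of the $G'$-torsor $\mathcal G'$ through the map $H'\to G'$ of the diagram. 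Third, the same analysis for $\mathcal K$ forces a reduction to $K'$, but now the prescribed $K''$-reduction of $G''_S$ is its $g''$-translate; fixing a lift $\tilde g\in G$ of $g''$, the $K'$-torsor $\mathcal K'$ becomes a reduction of $\mathcal G'$ along the conjugated subgroup $(K')^{g''}:=\tilde g K'\tilde g^{-1}$, which does land in $G'$ since its image in $G''$ is $g''e(g'')^{-1}=e$. Collecting the three steps identifies $F_{g''}(S)$ with the groupoid of $G'$-torsors carrying an $H'$-reduction and a $(K')^{g''}$-reduction, i.e. $F_{g''}\simeq\B H'\times_{\B G'}\B(K')^{g''}\simeq H'\backslash G'/(K')^{g''}$.

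I expect the main obstacle to be organizational rather than conceptual: (i) checking that the dictionary ``trivialization of an induced torsor $\leftrightarrow$ reduction of structure group'' is compatible across the three columns of the diagram and natural in $S$, so that the three steps assemble into an honest equivalence of stacks — this is exactly where the hypothesis that the rows are \emph{fiber sequences of group stacks}, not merely exact sequences of sheaves of groups, is essential, since it is what makes $\B G'=\mathrm{pt}\times_{\B G''}\B G$ and its analogues hold as stacks; and (ii) getting the handedness right, so that the twist sits on the $K'$-factor with the correct conjugation convention and is independent of auxiliary choices — a different lift $\tilde g$, or a different representative $h''g''k''$ of the double coset, alters $(K')^{g''}$ only by conjugation inside $G'$ (using $K'\trianglelefteq K$ and the diagram), so $H'\backslash G'/(K')^{g''}$ is well defined up to canonical equivalence and the identification above is canonical once a representative and lift are fixed. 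Everything else is a formal consequence of the fiber-product description of double-coset stacks.
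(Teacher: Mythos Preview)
Your argument is correct and is more carefully organized than the paper's, but it takes a genuinely different route. The paper proceeds by direct construction: pick a lift $g\in G$ of $g''$, send $x\in G'$ to the double coset $HxgK\in H\backslash G/K$ (which visibly lies in $F_{g''}$), observe that this descends to $H'\backslash G'/(K')^{g''}$, and then check by hand that the resulting map is an equivalence. Your approach instead rewrites both sides as fiber products of classifying stacks, $H\backslash G/K\simeq\B H\times_{\B G}\B K$, and unwinds the torsor data defining an $S$-point of the fiber; the key input $\B G'\simeq\mathrm{pt}\times_{\B G''}\B G$ is exactly where the fiber-sequence hypothesis enters, and the twist on $K'$ emerges from the $g''$-translated trivialization. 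The paper's route is shorter and closer to the set-theoretic double-coset picture, but leaves the homotopy-coherence of ``directly check'' to the reader; your route is longer but makes the stacky structure and the role of the hypotheses completely explicit, and the well-definedness up to inner automorphism (which the paper relegates to a remark) falls out of your step~(ii) automatically.
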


\begin{rmk}
    Here $g''$ acts by conjugation as an outer automorphism of $G'$ (which is canonically defined up to inner automorphism), so that $(K')^{g''}$ is a canonically defined subgroup of $G'$ up to conjugation and $ H'\backslash G'/(K')^{g''}$ is well-defined up to isomorphism. Moreover the map $ H'\backslash G'/(K')^{g''}\longrightarrow H'\backslash \mathrm{pt}=\B H'$ is also canonically defined up to conjugation by $h'\in H'$.
\end{rmk}
The proof is straightforward: choose a preimage $g$ of $g''$ in $G$, we then have a natural map $G\longrightarrow F_{g''}$, $x\mapsto H''xgK''$ which canonically descends to a map $ H'\backslash G'/(K')^{g''}\longrightarrow F_{g''}$. We may directly check that this is an isomorphism.

Applying this lemma to $\map(X,\B\ga)\longrightarrow\map(X,\B^2Z)$ we directly get the twisted version of Weil uniformization $$\bun_G^\xi=\prod_{x\in X} G(\co_x)\backslash \prod_{x\in X} G(\K_x)/G(K_X)^\xi$$

Note that the properness of $X$ is not used in the argument above, given $x_1,\cdots, x_n\in X$ we may apply the same argument to $\Sigma=X\backslash \{x_i\}$ and get $$\bun_G^\xi(\Sigma)=\prod_{x\in \Sigma} G(\co_x)\backslash \prod_{x\in \Sigma} G(\K_x)/G(K_X)^\xi$$
Since $G$ is simply-connected and semisimple, all $G$ bundles on $\Sigma$ are trivial by Drinfeld-Simpson \cite{drinfeld1995b} and $\bun_G^\xi(\Sigma)=\B  G(\Sigma)$, comparing the Borel-Weil-Bott isomorphism for $X$ and $\Sigma$ we obtain another form of uniformization theorem, whose untwisted version was proved in \cite{beauville1998picard}:
$$\bun_G^\xi=\prod_{i} G(\co_{x_i})\backslash \prod_{i} G(\K_{x_i})/ G(\Sigma)^\xi$$

Since the preceding proof is somewhat formal, we now expand it into more concrete terms: Drinfeld and Simpson showed that the untwisted version of uniformization follows from the Zariski-locally trivial property of $G$ bundles, and the same argument also applies to twisted bundles, as any $Z$ gerbe over $X$ is also Zariski-locally trivial. Following the same idea, for any $\xi$-twisted $G$ bundle $P$, we may first find a trivialization of $\xi$ over a Zariski open subset $U\subset X$ (i.e. complement of a finite set $S\subset X$), so that $P$ is given by an untwisted bundle over $U$; and then find a trivialization of $P_U$ over $U\subset X$ (we might need to shrink the $U$ in the first step). The data of a gerbe $\xi$ with trivialization on $U$ is equivalent to gerbes $\xi_x$ on discs $\mathcal{D}_x$ around $x\in S$ and trivializations on each punctured disc $\mathcal{D}_x^\circ$ (we may name them gerbes supported on $x$), such that after gluing they recover the original gerbe $\xi$ (in particular this implies that  $[\xi_x]\in H^2_c(\mathcal{D}_x,Z)\simeq Z$ satisfies their sum is $[\xi]\in  H^2_c(X,Z)\simeq Z(1)(k)^\vee$)

 After fixing the trivialization of $\xi$ the set of trivializations of $P_U$ forms a $G(\co_U)$ torsor (taking colimit over all $U$ gives a $G(\K_X)$ torsor). The data of a twisted bundle on $X$ with a trivialization on $U$ is equivalent to twisted bundles on discs $\mathcal{D}_x$ and trivializations on $\mathcal{D}_x^\circ$ (we may name them twisted bundles supported on $x$). In the untwisted case, the space of $G$ bundle supported on $x$ is by definition the fiber of 
 $\map(\mathcal{D}_x,\B G)\longrightarrow\map(\mathcal{D}_x^\circ,\B G) $ at the constant map and parameterized by affine Grassmannian $\mathrm{Gr}_G=G(\co_x)\backslash G(\K_x)$. In the twisted case, the space of $\xi_x$ twisted $G$ bundle supported on $x$ is the fiber of 
 $$\map(\mathcal{D}_x,\B\ga)\longrightarrow\map(\mathcal{D}_x^\circ,\B\ga)\underset{\map(\mathcal{D}_x^\circ,\B^2 Z)}{\times}\map(\mathcal{D}_x,\B^2 Z)$$ at the point given by $\xi_x$ and trivialization of $\xi_x$ at $\mathcal{D}_x^\circ$. One may expand the definition and show that this fiber space is canonically isomorphic to $G(\co_x)^{\xi_x}\backslash G(\K_x)$\footnote{It seems natural to name it twisted affine Grassmannian, but unfortunately this word already has other meaning.}. Hence the moduli of twisted bundles with trivialization on an open set is $\prod_{x\in X} G(\co_x)^{\xi_x}\backslash \prod_{x\in X} G(\K_x)$ and forgetting the trivialization we get $$\bun_G^\xi=\prod_{x\in X} G(\co_x)^{\xi_x}\backslash \prod_{x\in X} G(\K_x)/G(K_X)$$ which is the dual version of Weil uniformization where we swap $H$ and $K$ when applying the lemma. If we swap the roles of $U$ and $\mathcal{D}_x$ in the previous discussion, we will get the original version $$\bun_G^\xi=\prod_{x\in X} G(\co_x)\backslash \prod_{x\in X} G(\K_x)/G(K_X)^\xi$$

\section{Borel-Weil-Bott theory for $\bun_G^\xi$}

In this section we generalize the Borel-Weil-Bott theory for $\bun_G$ developed by Teleman \cite{teleman1998borel} to $\bun_G^\xi$. For application to semiorthogonal decomposition we only need the non-positive level case which is in some sense degenerate\footnote{Level $0$ case already exhibit rich interaction with current group representation.}, but we give the general result for its own interest and two independent proofs, one using Teleman's calculation of group cohomology and one using factorization homology method developed by Rozenblyum.

We have seen that $$\bun_G^\xi=\prod_{x\in X} G(\co_x)\backslash \prod_{x\in X} G(\K_x)/G(K_X)^\xi$$
hence we have a canonical map 
$$\bun_G^\xi\longrightarrow\prod_{x\in X} G(\co_x)\backslash\mathrm{pt}=\B \prod_{x\in X} G(\co_x)$$
along which we can pull back $\mathrm{Rep}\,\prod_{x\in X} G(\co_x)$ to $\D(\bun_G^\xi)$. We define tautological vector bundles as their tensor products with powers of the basic determinant line bundle $\cdl$ \cite{beauville1998picard} and denote $\D(\bun_G^\xi)^\mathrm{taut}$ to be the subcategory they generate. Since $\mathrm{Rep}\,\prod_{x\in X} G(\co_x)$ is generated by the image of the pull back map along natural projection $\mathrm{Rep}\,\prod_{x\in X} G(\co_x)\longrightarrow\mathrm{Rep}\,\prod_{x\in X} G_x$, it's natural to first study tautological bundles arising from $G$ representations. 

For tautological bundles defined by $G$ representations, the construction above can be reformulated as follows: given $N$ representations $V_i$ of $G$ with central character $\chi_i$, their external tensor product $\boxtimes V_i$ determines a coherent sheaf on $\B G^N$ and hence a twisted sheaf on $\B (G^{ad})^N$. We can pull it back along the natural evaluation map $X^{N}\times \bun^\xi_G\longrightarrow \B (G^{ad})^N$ and get a natural twisted sheaf on $X^{N}\times \bun^\xi_G$, which is only twisted in $X^N$ direction by the gerbe $\boxtimes (\chi_i\circ\xi)$. Fix a point $(x_i)\in X^N$ we may consider the fiber at $(x_i)$ which gives a vector bundle $\V$ on $\bun^\xi_G$ and this bundle is exactly the tautological bundle corresponding to $V=\boxtimes V_i\in \mathrm{Rep}\,\prod_{x\in X} G_x$. 

To state the Borel-Weil-Bott theorem, let us first set up our notations. Fix an integral level $k$, we call an irreducible representation $V$ of $G$ either regular or singular according to the location of the weight $(\lambda+\rho,\h+k)$ where $\lambda$ is the highest weight of $V$, $\rho$ is half-sum of the positive roots or equivalently sum of fundamental dominant weights, $\h$ is the dual Coxeter number of $G$ (which is $n$ for $\slr$) and $k$ is the level: for $k\geq 0$ we say it's regular if it lies in the interior of some Weyl alcove and define $\leng(V)$ to be the smallest length of affine Weyl transformation relating this Weyl alcove with the fundamental alcove, and we denote $V^{sm}$ to be the irreducible representation whose highest weight is the image of $\lambda$ in the fundamental Weyl alcove; otherwise when $(\lambda+\rho,\h+k)$ lies on the boundary of an affine Weyl alcove, we say this representation is singular. For $k<0$ we say all representations are singular because $(\lambda+\rho,\h+k)$ cannot be transformed into the fundamental alcove by an affine Weyl transformation.

Following is one form of the Borel-Weil-Bott theorem for $\bun_G^\xi$, which allows us to compute the cohomology of tautological bundles:
\begin{restatable}{theorem}{bwb}
$H^*(\bun^\xi_G,\cdl^{\otimes k}\otimes \V)$ vanishes if some $V_i$ is singular and is concentrated in degree $\sum\ell(V_i)$ if all of them are regular, in this case it's isomorphic to the global section $\Gamma(\bun^\xi_G,\cdl^{\otimes k}\otimes \V^{sm})$ where $\V^{sm}$ is the vector bundle obtained from representations $V_i^{sm}$.
\end{restatable}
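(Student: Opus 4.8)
The plan is to show that Teleman's proof of Borel--Weil--Bott for $\bun_G$ \cite{teleman1998borel} adapts to the twisted setting with essentially one new input. Recall the untwisted argument: via Weil uniformization $\bun_G=\prod_x G(\co_x)\backslash\prod_x G(\K_x)/G(K_X)$ one pulls $\cdl^{\otimes k}\otimes\V$ back to the adelic affine Grassmannian $\mathbf{Gr}:=\prod_x G(\co_x)\backslash\prod_x G(\K_x)$ --- which, since $\V$ is induced from the $\B G(\co_{x_i})$-factors at the finitely many insertion points and $\cdl$ is the determinant line bundle, is really supported on the honest product $\prod_i\gr$ of local affine Grassmannians --- computes its cohomology there by Kac--Moody Borel--Weil--Bott on the affine flag variety, and then takes $G(K_X)$-cohomology, which by Teleman's acyclicity theorem collapses to $G(K_X)$-invariants with no higher contribution. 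The twisted uniformization $\bun_G^\xi=\prod_x G(\co_x)\backslash\prod_x G(\K_x)/G(K_X)^\xi$ has the \emph{same} space $\mathbf{Gr}$; only the outer group is replaced by the inner form $G(K_X)^\xi$. So it suffices to check: (i) that $\cdl^{\otimes k}\otimes\V$ still descends along $\mathbf{Gr}\to\bun_G^\xi$, i.e. the level-$k$ central extension of $\prod_x G(\K_x)$ is canonically trivialized over $G(K_X)^\xi$; and (ii) that Teleman's acyclicity persists for $G(K_X)^\xi$. Granting these, the computation is verbatim the untwisted one, and the answer --- vanishing when some $V_i$ is singular, concentration in degree $\sum\leng(V_i)$ with $\V\rightsquigarrow\V^{sm}$ otherwise --- is manifestly independent of $\xi$.

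For (i): in the untwisted case the trivialization comes from the residue theorem on the proper curve $X$ (the sum of local residues of a rational differential vanishes), an adelic statement unaffected by inner twisting by a center-valued gerbe; equivalently the determinant line bundle is constructed on $\map(X,\B\ga)$ as in \cite{beauville1998picard}, and $\cdl$ is its restriction to the component $\xi$, so descent is automatic. For (ii): write $G(K_X)^\xi=G^\xi(K_X)$ with $G^\xi$ the inner form of $G$ obtained by pushing the gerbe to a local gerbe at an auxiliary point $x_0$ disjoint from the $x_i$; $G^\xi$ is again semisimple and simply connected, so by Drinfeld--Simpson \cite{drinfeld1995b} all $G^\xi$-bundles on the affine curve $\Sigma'=X\setminus\{x_0,x_1,\dots,x_N\}$ are trivial, the relevant stack is $\B G^\xi(\Sigma')$, and the cohomology in question is the continuous cohomology of this loop group --- or, more simply, choose the trivializing open set so that $\xi|_{\Sigma'}$ is trivial, whence $G^\xi|_{\Sigma'}=G|_{\Sigma'}$ and Teleman's acyclicity applies on the nose. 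The remaining step, Borel--Weil--Bott on $\mathbf{Gr}\simeq\prod_i\gr$, factorizes by Künneth (the $\gr$ being ind-proper) into local Kac--Moody BWB at each $x_i$, giving exactly the stated vanishing/concentration; re-descending along $\mathbf{Gr}\to\bun_G^\xi$ turns $\Gamma(\mathbf{Gr},-)^{G(K_X)^\xi}$ back into $\Gamma(\bun_G^\xi,\cdl^{\otimes k}\otimes\V^{sm})$.

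I expect the main obstacle to be (ii): Teleman's vanishing of $G(K_X)$-cohomology rests on a careful analysis of the continuous cohomology of $G(\Sigma)$ --- a pro-unipotent extension of a reductive group --- together with a vanishing of $\mathrm{Ext}$'s between Verma-type modules, and although the twist by a $Z$-gerbe is ``small,'' one must be sure the filtrations, the pro-unipotent radical, and the comparison with Lie-algebra cohomology are all unchanged. The cleanest way around this is exactly to concentrate all the nontriviality of $\xi$ at an auxiliary point away from the insertions, so the outer group is literally $G$ of an affine curve and no new acyclicity statement is needed. A secondary, bookkeeping point is matching central characters: $\cdl^{\otimes k}\otimes\V$ lives on $\bun_G^\xi$ rather than merely on $\map(X,\B\ga)$ only when $\sum_i\chi_i$ matches $k$ against $\xi$, and one checks --- immediately from the shape of the BWB answer --- that the vanishing/concentration is insensitive to this constraint.

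Finally there is a second proof that sidesteps uniformization and acyclicity, via the factorization-homology method of Rozenblyum: $R\Gamma(\bun_G,\cdl^{\otimes k}\otimes(-))$ is the chiral homology over $\ran X$ of the factorization module generated by the $V_i$ at the $x_i$, with $\cdl$ encoded by the level-$k$ Kac--Moody central extension, and twisting by $\xi$ becomes a gerbe-twisted factorization structure on the Ran space; by excision the BWB computation reduces to the purely local one at each insertion point, whence the answer and its independence of $\xi$. This route makes the multiplicativity over the $x_i$ manifest and works uniformly in the level, at the cost of more homotopy-theoretic input.
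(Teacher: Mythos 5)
Your proposal follows the same strategy as the paper's two proofs: (1) Weil uniformization, with the essential new twisted input being that the cocycle representing $\xi$ can be chosen supported away from the insertion points and the auxiliary point, so that $G(\Sigma)^\xi\simeq G(\Sigma)$ compatibly with the evaluation maps, reducing the computation to Teleman's untwisted vanishing theorem; and (2) the factorization-homology route via full faithfulness of $\grr\to\bun_G^\xi$ (homological contractibility of $G(K_X)^\xi$) together with chiral homology of the WZW chiral algebra. Your identification of the key step — concentrating all nontriviality of $\xi$ away from the insertions — is exactly the paper's, and your concern (ii) is dispatched precisely as you suggest, though the theorem of Teleman being invoked is his Kodaira-type vanishing for $G(\Sigma)$-cohomology of conformal blocks rather than the homological contractibility of $G(K_X)$ (the latter enters only in route (2)).
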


The untwisted version of this theorem was proved for by Teleman in \cite{teleman1995lie,teleman1996verlinde,teleman1998borel}\footnote{In these papers, only $k\geq 0$ was explicitly studied, but as we will see $k<0$ case follows directly from the affine Borel-Weil-Bott theory.} using Lie algebra cohomology and later by Rozenblyum in \cite{rozenblyum2021connections} using factorization homology. Both of these approaches can be appropriately adapted to our twisted context.

By Weil uniformization theorem, we may compute this cohomology by representation of loop groups: choose $\infty\neq x_i\in X$, let $\Sigma=X\backslash \infty$, then we have the following $ \map(G,\Sigma)^\xi$ bundle

$$\mathrm{Gr}_G=G(\co_{\infty})\backslash  G(\K_\infty)\overset{p}{\longrightarrow} \bun_G^\xi= G(\co_{\infty})\backslash  G(\K_\infty)/\map(G,\Sigma)^\xi$$

The (derived) global section on the base may be computed by the (derived) $\map(G,\Sigma)^\xi$-invariant of global section on the total space, so that we have the following Grothendieck spectral sequence:
$$ H^*(\B  G(\Sigma)^\xi,H^*(\mathrm{Gr}_G,p^* (\cdl^{\otimes k}\otimes \V)))\Rightarrow H^*(\bun^\xi_G,\cdl^{\otimes k}\otimes \V)\,\,\,\, (\star)$$

We directly see that $p^* (\cdl^{\otimes k}\otimes \V)\simeq p^* (\cdl^{\otimes k})\otimes p^*(\V)$ and $p^*(\V)$ is the trivial vector bundle on $\gr$ with fiber $\otimes V_i$ carrying non-trivial $G(\Sigma)^\xi$ action as $\infty$ is disjoint from $\{x_i\}$. Now we need to study the basic line bundle $\cdl\in\mathrm{Pic}(\bun_G^\xi)$ more closely. To determine the Picard group of $\bun_G^\xi$ we use the descent spectral sequence for $\gm$ (which follows from the fact that $\gm$ as a sheaf is representable):
$$ H^*(\B G(\Sigma)^\xi,H^*(\mathrm{Gr}_G,\gm))\Rightarrow H^*(\bun^\xi_G,\gm)$$

In low degrees this gives an exact sequence
$$0\longrightarrow H^1(\B G(\Sigma)^\xi,\gm)\longrightarrow H^1(\bun^\xi_G,\gm)\longrightarrow  H^0(\B G(\Sigma)^\xi,H^1(\mathrm{Gr}_G,\gm))\longrightarrow H^2(\B G(\Sigma)^\xi,\gm)$$
where the last arrow maps $k\in \mathbb{Z}\simeq H^0(\B G(\Sigma)^\xi,H^1(\mathrm{Gr}_G,\gm))$ to the group cohomology class given by pulling back the level $k$ central extension of $G(\K)$ by the natural inclusion $ G(\Sigma)^\xi\longrightarrow G(\K)$ at $\infty\in X$. Here we need to recall that we have canonical isomorphism $\mathrm{Pic}(\gr)=H^1(\mathrm{Gr}_G,\gm)\simeq \mathbb{Z}$ on which $ G(\Sigma)^\xi$ acts trivially, hence we have $ H^0(\B G(\Sigma)^\xi,H^1(\mathrm{Gr}_G,\gm))\simeq\mathbb{Z}$. 
Also note that $ G(\Sigma)^\xi$ only differ from $ G(\Sigma)$ by a outer automorphism as a subgroup of $\prod_{x\in X} G(\K_x)$, hence there is an isomorphism canonical up to composition with inner automorphism. First group cohomology $H^1(\B G(\Sigma)^\xi,\gm)$ are group homomorphisms from $ G(\Sigma)^\xi$ to $\gm$ hence vanishes by the isomorphism above, and second group cohomology $H^2(\B G(\Sigma)^\xi,\gm)$ can be interpreted as central extensions of $ G(\Sigma)^\xi$ by $\gm$. Moreover, the canonical central extension of the loop group becomes trivial after pulling back along the natural inclusion $ G(\Sigma)^\xi\longrightarrow G(\K)$. On the Lie algebra level, the canonical central extension is determined by the Chevalley-Eilenberg cocycle $(f,g)\mapsto k\,\mathrm{Res}_\infty\langle f,g\rangle$, which is trivial as $\infty$ is the only possible pole, hence has residue $0$. However, the natural pull back from second group cohomology to Lie algebra cohomology is injective as $ G(\Sigma)$ is simply connected, hence the group central extension is also trivial.

Therefore the previous $\gm$ sequence is isomorphic to the following \footnote{One can prove that the last term has rank $2g$.}
$$0\longrightarrow H^1(\B G(\Sigma)^\xi,\gm)\simeq 0\longrightarrow H^1(\bun^\xi_G,\gm)\longrightarrow$$
$$ H^0(\B G(\Sigma)^\xi,H^1(\mathrm{Gr}_G,\gm))\simeq \mathbb{Z}\overset{0}{\longrightarrow} H^2(\B G(\Sigma)^\xi,\gm)$$
which gives the natural isomorphism $\mathrm{Pic}(\bun_G^\xi)\simeq\mathrm{Pic}(\gr)\simeq \mathbb{Z}$. The positive generator is our basic line bundle $\cdl$. We may directly check that the central character of $\cdl$ is exactly $\xi\in Q\simeq \Hom(Z,\gm)$. In particular, the order of the central character is exactly the order of $\xi$ in $Q$.

Now we see that the pull back of $\cdl$ also gives fundamental line bundle on the affine Grassmannian, whose cohomology is computed by the affine Borel-Weil-Bott theorem \cite{kumar2012kac}. More precisely, we have $H^*(\gr,p^* (\cdl^{\otimes k}))$ vanishes for $k<0$ and is one dimensional for $k=0$. For $k\geq 0$ this gives vacuum representation $H_c$ of loop groups at level $k$.

By the Grothendieck spectral sequence $(\star)$, we directly see that for $k<0$ we have $H^*(\bun^\xi_G,\cdl^{\otimes k}\otimes \V)\simeq 0$ and for $k\geq 0$ we have $H^*(\bun^\xi_G,\cdl^{\otimes k}\otimes \V)\simeq H^*(\B  G(\Sigma)^\xi,H_c\otimes V)$. Moreover the natural isomorphism $ G(\Sigma)^\xi\simeq G(\Sigma)$ commutes with the evaluation map $ G(\Sigma)^\xi\longrightarrow\prod_i G_{x_i}\times G_\infty$ as we may always choose two cycle representing $\xi$ supported away from $\{x_i\}\cup\{\infty\}$. Hence we have $H^*(\B  G(\Sigma)^\xi,H_c\otimes V)\simeq H^*(\B  G(\Sigma),H_c\otimes V)$, while the latter was computed by Teleman in \cite{teleman1998borel}. So we finish the proof of Borel-Weil-Bott theorem for twisted bundles.

Below we give the other proof by the machinery of factorization algebra. First we may observe that the restricted product in the Weil uniformization may be identified with the Beilinson-Drinfeld Grassmannian $\grr\overset{\pi}{\longrightarrow}\ran$ over the Ran space \cite{rozenblyum2021connections}. Corollary 7.3.4 loc. cit. states that the pulling back along the uniformization map $\grr\longrightarrow\bun_G$ is fully faithful. As noticed loc. cit. this is equivelent to the (coherent) homological contractibility of group of rational maps, i.e. $H^*(G(K_X),\co)$ is one-dimensional, by the isomorphism $G(K_X)\simeq G(K_X)^\xi$, the same holds for $G(K_X)^\xi$ and the same proof shows that $\grr\longrightarrow\bun_G^\xi$ is fully faithful. So $H^*(\bun^\xi_G,\cdl^{\otimes k}\otimes \V)\simeq H^*(\grr,p^* (\cdl^{\otimes k}\otimes \V))\simeq H^*(\ran,\pi_*p^* (\cdl^{\otimes k}\otimes \V))$. But this is the chiral homology of the Wess-Zumino-Witten chiral algebra $\ca_k$ whose !-fiber is $H_c=H^*(\gr,p^* (\cdl^{\otimes k}))$ with chiral modules at $x_i$ whose !-fiber is $H^*(\gr,p_i^* (\mathcal{V}_i))$, which is equivalent to Wess-Zumino-Witten chiral module as loop group representation by the affine Borel-Weil-Bott theorem. The computation of chiral homology for Wess-Zumino-Witten chiral algebra is well understood \cite{tuy}\footnote{They only consider the degree $0$ part but as Teleman shows higher degree vanish \cite{teleman1998borel} by a version of Kodaira vanishing theorem.}, which gives the Borel-Weil-Bott theorem we want. 

\begin{rmk}
    These two proofs are essentially independent, as the result we cited from \cite{teleman1998borel} in the first proof relies heavily on the structure of rational comformal field theory (more precisely Sugawara construction of projectively flat connection and the factorization of conformal blocks at degeneration); while the second approach only use the Wess-Zumino-Witten chiral algebra which only encodes the local geometric structure of the conformal field theory but not the global topological structure.
\end{rmk}

In this paper we will only use the $k\leq 0$ part of this theorem:
\begin{cor}
$H^*(\bun^\xi_G,\cdl^{\otimes k}\otimes \V)$ vanishes if $k<0$. For $c=0$ it also vanishes if for some of $V_i$ the highest weight $w_i$ lies outside the root lattice, otherwise it is one dimensional concentrated on degree $\langle\rho,\sum w_i\rangle$.
\end{cor}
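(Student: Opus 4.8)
The plan is to deduce the corollary from the just-proved Borel-Weil-Bott theorem (Theorem~\ref{bwb}) by specializing to the degenerate levels $k\leq 0$. First I would observe that the case $k<0$ is already contained verbatim in the theorem: the pullback $p^*(\cdl^{\otimes k})$ to $\gr$ has vanishing cohomology by the affine Borel-Weil-Bott theorem for negative level, so the Grothendieck spectral sequence $(\star)$ forces $H^*(\bun^\xi_G,\cdl^{\otimes k}\otimes \V)\simeq 0$ regardless of $\V$. No representation-theoretic input about the $V_i$ is needed here.

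The substantive case is $k=0$. Here I would invoke the spectral sequence $(\star)$ again: at level $0$ the vacuum representation $H_c$ of the loop group is one-dimensional and trivial, so $H^*(\bun^\xi_G,\V)\simeq H^*(\B G(\Sigma)^\xi, \bigotimes V_i)\simeq H^*(\B G(\Sigma),\bigotimes V_i)$ using the canonical identification $G(\Sigma)^\xi\simeq G(\Sigma)$ compatible with the evaluation maps. Equivalently, via the factorization-homology proof, the chiral algebra $\ca_0$ at level $0$ is the trivial (unit) chiral algebra, so the chiral homology reduces to the ordinary de Rham / Dolbeault cohomology of $\bun_G^\xi$ with coefficients in the tautological bundle, which is where the $\langle\rho,\sum w_i\rangle$ shift comes from. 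Concretely, I would apply Theorem~\ref{bwb} with $k=0$: the weight to examine is $(\lambda_i+\rho,\h)$. At level $0$, a representation $V_i$ with highest weight $w_i$ is regular precisely when $w_i+\rho$ is a regular weight (lies in no wall) and its Weyl-chamber position can be reached from the fundamental one; since $\rho$ is already regular and the relevant alcove structure at level $0$ collapses to the finite Weyl chamber structure, regularity of $V_i$ is equivalent to $w_i$ lying in the (shifted) interior, with $\ell(V_i)=\langle\rho,w_i\rangle$ being the length of the Weyl element carrying $w_i+\rho$ back to dominant position, and $V_i^{sm}$ becomes the trivial representation. The condition that singularity occurs exactly when $w_i$ lies outside the root lattice needs to be checked: the gerbe $\xi$ twists by the central character, and $p^*(\cdl^{\otimes 0})$ is genuinely trivial only when the total central character is trivial, i.e. when $\sum w_i$ lies in the root lattice; otherwise the tautological bundle is a nontrivial twisted sheaf with no global cohomology.

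So the steps in order are: (i) dispatch $k<0$ directly from $(\star)$ and affine BWB; (ii) for $k=0$, identify $H_c$ with the one-dimensional trivial representation and reduce to $H^*(\B G(\Sigma),\bigotimes V_i)$, or alternatively to chiral homology of the trivial chiral algebra; (iii) translate the ``regular/singular'' dichotomy of Theorem~\ref{bwb} at level $0$ into the concrete statement that singularity $\iff$ some $w_i\notin$ root lattice, and that in the regular case $V_i^{sm}$ is trivial so the answer is one-dimensional; (iv) compute the cohomological degree as $\sum_i\ell(V_i)=\sum_i\langle\rho,w_i\rangle=\langle\rho,\sum w_i\rangle$. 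The main obstacle is step (iii): one must carefully verify that at level $0$ the notion of ``regular'' coming from alcoves for the affine Weyl group degenerates to the statement about the root lattice — this is where the central character of $\cdl$ (shown earlier to be exactly $\xi\in Q\simeq\Hom(Z,\gm)$) enters, and one has to be careful that the twisting gerbe does not introduce spurious nonvanishing. Once the bookkeeping of central characters against the root/weight lattice is done, everything else is a direct specialization of the theorem already proved.
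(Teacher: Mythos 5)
Your steps~(i) and~(ii) are essentially right and match the paper: $k<0$ is dispatched by $(\star)$ together with the vanishing of $H^*(\gr,p^*\cdl^{\otimes k})$, and at $k=0$ the vacuum module $H_c$ is trivial so everything reduces to $H^*(\B G(\Sigma),\bigotimes V_i)$, equivalently to $\bigotimes_i H^*(G(\co_{x_i}),V_i)$ via the $G(\co)$-version of the theorem. The problem is your step~(iii), which you correctly flag as the crux but resolve incorrectly, in three ways. First, the affine alcove structure at $k=0$ does \emph{not} collapse to the finite Weyl chamber: the shifted affine weight is $(w_i+\rho,\h)$, so the relevant level is $\h>0$ and the alcove is a genuine bounded simplex with one extra wall $\langle\,\cdot\,,\theta^\vee\rangle=\h$ beyond the finite chamber walls. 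Second, because $w_i$ is dominant, $w_i+\rho$ is already (strictly) dominant; the finite Weyl element ``carrying $w_i+\rho$ back to dominant position'' is the identity of length $0$, so that cannot be $\ell(V_i)$. The length in the theorem is the \emph{affine} Weyl length moving the alcove containing $(w_i+\rho,\h)$ to the fundamental alcove — this is what produces a nonzero integer, and it is the additivity of this over the $x_i$'s that matches $\langle\rho,\sum w_i\rangle=\sum_i\langle\rho,w_i\rangle$. Third, the appeal to the gerbe is misplaced: $\cdl^{\otimes 0}$ is the trivial sheaf, full stop, and once the points $x_i$ are fixed the tautological bundle $\V$ is an honest (untwisted) vector bundle on $\bun_G^\xi$ — the twisting in the construction lives only in the $X^N$-direction. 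The root-lattice criterion is also not ``$\sum w_i\in Q$''; it is that \emph{each} $w_i\in Q$ separately. This is immediate from the tensor form $H^*(\bun_G^\xi,\V)\simeq\bigotimes_i H^*(G(\co_{x_i}),V_i)$: the finite center $Z(G)$ sits inside $G(\co_{x_i})$ as constant loops and acts on $V_i$ through the character determined by $\bar w_i\in P/Q$, so each factor vanishes the moment $w_i\notin Q$; the gerbe $\xi$ plays no role in this step (the same vanishing holds verbatim in the untwisted case). So the correct route is not to invoke a collapsed chamber structure or the twisting of $\cdl$, but to specialize Teleman's Lie-algebra-cohomology computation of $H^*(G(\co),V)$ at level zero, or equivalently to read off the regularity criterion directly from the level-$\h$ affine alcove for the $\rho$-shifted weight.
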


It is straightforward to extend this calculation to the case when $\V$ is a tautological bundle from $G(\co)$ representations instead of $G$ representations. Fix a level $k$, we have natural induction functor $\indk:\rep\,G(\co)\longrightarrow \rep \,G(\mathcal{K})_k\simeq\rep\,\ca_k$. For $k<0$, $\ca_k=0$, $\rep\,\ca_k=\{0\}$ and hence $\indk$ is the zero functor; for $k=0$, $\ca_k$ is the unit factorization algebra, $\rep\,\ca_k=\mathrm{Vect}$, and $\indk$ is the (derived) functor taking $G(\co)$ invariants.

\begin{restatable}{theorem}{bwbg}
    Let $V_i$ be finite dimensional representations of current groups $G(\co_{x_i})$ and $\mathcal{V}$ be the corresponding tautological bundle pulled back along $\bun_G^\xi\longrightarrow\prod  G(\co_{x_i})$, $H^*(\bun^\xi_G,\cdl^{\otimes k}\otimes \V)$ is the factorization homology of $\ca_k$ with factorization module $\indk V_i$ inserted at $x_i$. In particular, it vanishes for $k<0$, and for $k=0$ it is given by group cohomology of $G(\co)$: $$H^*(\bun^\xi_G,\mathcal{V})=\bigotimes_i H^*(G(\co_{x_i}),V_i)$$
\end{restatable}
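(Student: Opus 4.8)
The plan is to bootstrap from the untwisted Borel--Weil--Bott calculation just established (Theorem~\ref{bwb} and its Corollary), upgrading it from tautological bundles built out of $G$-representations to those built out of $G(\co)$-representations, by using the factorization-algebra presentation of the cohomology together with the induction functor $\indk\colon\rep\,G(\co)\longrightarrow\rep\,\ca_k$. First I would run the same Weil-uniformization argument: choose $\infty\notin\{x_i\}$, set $\Sigma=X\setminus\infty$, and use the $\map(G,\Sigma)^\xi$-bundle $\gr\overset{p}{\longrightarrow}\bun_G^\xi$ to rewrite $H^*(\bun_G^\xi,\cdl^{\otimes k}\otimes\V)$ via the Grothendieck spectral sequence $(\star)$ as $H^*(\B G(\Sigma)^\xi,\, H^*(\gr,p^*\cdl^{\otimes k})\otimes V)$, where now $V=\bigotimes_i V_i$ is a representation of $\prod G(\co_{x_i})$ rather than of $\prod G_{x_i}$; the key point is that $p^*\V$ is still the constant bundle on $\gr$ with fiber $\bigotimes V_i$ carrying the $G(\Sigma)^\xi$-action pulled back along the evaluation $G(\Sigma)^\xi\to\prod_i G(\co_{x_i})$, because $\infty$ is disjoint from the $x_i$'s and we may choose a $2$-cocycle representing $\xi$ supported away from $\{x_i\}\cup\{\infty\}$, so that $G(\Sigma)^\xi\simeq G(\Sigma)$ compatibly with evaluation.

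Next, for $k<0$ I would invoke the already-proven vanishing $H^*(\gr,p^*\cdl^{\otimes k})\simeq 0$ from affine Borel--Weil--Bott, which kills the whole spectral sequence and gives the first assertion with no further work. For $k=0$, the relevant input is $H^*(\gr,\co)\simeq k$ in degree zero, so the spectral sequence degenerates to $H^*(\bun_G^\xi,\V)\simeq H^*(\B G(\Sigma)^\xi,\,\bigotimes_i V_i)$ where $\bigotimes_i V_i$ is pulled back along evaluation at the $x_i$. Then I would reduce the group cohomology of $G(\Sigma)$ with coefficients in a tensor product of representations supported at distinct points to the product $\bigotimes_i H^*(G(\co_{x_i}),V_i)$: this is exactly the local-to-global/factorization statement for the unit factorization algebra, i.e. chiral homology of $\ca_0$ with the factorization modules $\ind_0 V_i=R\Gamma(G(\co_{x_i}),V_i)$ inserted at $x_i$; concretely it follows because $\ca_0$ is the unit, so its chiral homology over $\ran$ with modules inserted at finitely many points is just the (external) tensor product of the !-fibers of those modules, and the !-fiber of $\ind_0 V_i$ at $x_i$ is $R\Gamma(G(\co_{x_i}),V_i)$ by the $k=0$ case of the local statement. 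For general $k$, the same factorization-homology argument identifies $H^*(\bun_G^\xi,\cdl^{\otimes k}\otimes\V)$ with the chiral homology of the Wess--Zumino--Witten chiral algebra $\ca_k$ with factorization modules $\ind_k V_i$ inserted at $x_i$, exactly as in the second proof of Theorem~\ref{bwb}, now simply fed a $G(\co)$-module rather than a $G$-module; the only new ingredient is that the !-fiber of the factorization module attached to $V_i$ is $R\Gamma(\gr, p_i^*\V_i)$, which by definition computes $\ind_k V_i$.

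I expect the main obstacle to be the careful identification of the factorization module attached at $x_i$: one must check that the tautological bundle $\V_i$ on $\bun_G^\xi$ coming from a $G(\co_{x_i})$-representation $V_i$ pulls back, along $\gr=\gr_{G,x_i}\to\bun_G^\xi$, to the sheaf whose !-fiber is precisely $\ind_k V_i$, compatibly with the factorization structure on $\grr$ over $\ran$ and with the chiral-module structure over $\ca_k$. This amounts to unwinding the definition of $\indk$ as the composite of $!$-pullback along $\gr\to\B G(\co)$ followed by the affine Borel--Weil--Bott identification, and checking it is a map of factorization modules; the subtlety is purely bookkeeping about base points and the twist $\xi$, and once the $G$-representation case (already done) is in hand, replacing $\rep\,G$ by $\rep\,G(\co)$ changes nothing in the global chiral-homology computation of \cite{tuy}, since that computation only sees the chiral algebra $\ca_k$ and its modules abstractly. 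Everything else --- the $k<0$ vanishing, the $\Pic$ computation, the reduction $G(\Sigma)^\xi\simeq G(\Sigma)$ --- is inherited verbatim from the proof of Theorem~\ref{bwb}.
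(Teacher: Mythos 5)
Your proposal is essentially correct and follows the same two routes the paper developed for Theorem~\ref{bwb}, which the paper then invokes implicitly (``straightforward to extend'') to get Theorem~\ref{bwbg}. All the ingredients you identify are the right ones: the Weil-uniformization/Grothendieck spectral sequence $(\star)$, the observation that the pullback $p^*\V$ is still the constant bundle on $\gr$ with fiber $\bigotimes_i V_i$ (now a $G(\co_{x_i})$-module rather than a $G$-module) since the $x_i$ lie in $\Sigma$ and $G(\Sigma)^\xi\simeq G(\Sigma)$ compatibly with evaluation; the vanishing of $H^*(\gr,p^*\cdl^{\otimes k})$ for $k<0$; the identification of the !-fiber of the inserted factorization module with $\indk V_i$; and Rozenblyum's fully-faithfulness of $\grr\to\bun_G^\xi$ making the global cohomology the chiral homology of $\ca_k$.

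One small expository comment: for $k=0$ you first run the Grothendieck spectral sequence to reduce to $H^*(\B G(\Sigma)^\xi,\bigotimes_i V_i)$, and then separately invoke factorization homology of the unit $\ca_0$ to obtain $\bigotimes_i H^*(G(\co_{x_i}),V_i)$. The first reduction is not actually used in the second: the factorization-homology argument computes $H^*(\bun_G^\xi,\V)$ directly, bypassing $\B G(\Sigma)$. So for $k=0$ you effectively have two parallel proofs, and the only one that reaches the tensor-product formula is the factorization one; this matches the way the paper frames the $k<0$ and $k=0$ specializations purely as ``$\ca_k=0$'' and ``$\ca_0$ is the unit with $\ind_0=R\Gamma(G(\co),-)$.'' You should also treat the claim that the !-fiber of the inserted module ``by definition'' is $\indk V_i$ as an identification to be checked rather than a definition --- it is the affine Borel--Weil--Bott identification between sections of the tautological bundle on $\gr$ and the induced $G(\K)_k$-module --- but since you flag exactly this as the main obstacle and point to the right mechanism, the proof is sound.
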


\section{Semiorthogonal decomposition of $\bun_G^\xi$}

In this section, we prove semiorthogonal decomposition  for the moduli stack $\bun_G^\xi$.

For the sake of completeness, we briefly review the theory of global Weyl modules of current groups \cite{khoroshkin2013highest,bennett2014macdonald,chari2015bgg}. Let us first recall the notion of highest weight categories, in particular the version introduced in \cite{feigin2022duality}: let $\C$ be a $k$-linear abelian category with enough projectives, and assume that simple objects $L_\C(\upsilon)$ are indexed by a set $\Upsilon$ with a surjection $\pi:\Upsilon\longrightarrow \Lambda$ onto a partially ordered set $\Lambda$ \footnote{In our example we have $\Upsilon\simeq \Lambda\times\mathbb{Z}$, and we denote $L_\C(\lambda)=L_\C(\lambda,0)$}. Then we have naturally defined subcategories $\C^{\leq \lambda}$ and $\C^{<\lambda}$ of $\C$ as well as the subquotient $\C^{\lambda}=\C^{\leq \lambda}/\C^{< \lambda}$ as generated by corresponding $L_\C(\upsilon)$. $\C$ is called a \textit{highest weight category} if the fully faithful embedding $\C^{\leq \lambda}\longrightarrow \C$ induces fully faithful embedding of derived categories $\D(\C^{\leq \lambda})\longrightarrow \D(\C)$. Then all of them are admissible subcategories, and their left orthogonal complements are $\D(\C^{> \lambda})$. More generally, given any subset $S\subset\Lambda$ which satisfies that for any $x\in S,\,y<x$ we have $y\in S$, we have full subcategory $\C^S\longrightarrow \C$ which induce fully faithful embedding on the derived category, and another full subcategory $\C^{\Bar{S}}\longrightarrow \C$ which is its right orthogonal complement. They are also highest weight categories in a natural way. We define $\Delta_\C(\lambda)$ to be the projective cover of $L_\C(\lambda)$ in $\C^{\leq \lambda}$, then we have a semiorthogonal decomposition $\D(\C^{\leq \lambda})=\langle \D(\C^{< \lambda}), \D(\langle\Delta_\C(\lambda\rangle)\rangle$, where by $\langle\Delta_\C(\lambda\rangle$ we mean the abelian subcategory it generates. This also implies $\C^{\leq \lambda}$ has a torsion pair $(\C^{< \lambda},\langle\Delta_\C(\lambda\rangle)$ decomposition. 

It is also convenient to define an abelian category $\C$ with distinguished generators $\Delta_\C(\lambda)$ to be a \textit{generalized highest weight category} if $\mathrm{Ext}^*(\Delta_\C(\lambda),\Delta_\C(\mu))=0$ unless $\lambda\leq \mu$ and $\mathrm{Ext}^*(\Delta_\C(\lambda),\Delta_\C(\lambda))$ is concentrated in degree $0$. Then we have a semiorthogonal decomposition $\D(\C)=\langle \D(\mathrm{End}(\Delta_\C(\lambda))) \rangle$. This is a highest-weight category in the usual sense if the algebras $\mathrm{End}(\Delta_\C(\lambda)))$ are semisimple.

It is a celebrated theorem ( \cite{khoroshkin2013highest,bennett2014macdonald,chari2015bgg} and also Example 1.12 in \cite{feigin2022duality}) that there is a highest weight structure on the category of graded representations of the current group $\rep^{\mathrm{gr}}(G[t])$ for any reductive group $G$ with $\Upsilon\simeq P_r\times\mathbb{Z}$ where $P_r$ is the partially ordered set of dominant weights and $\mathbb{Z}$ corresponds to the grading. The standard objects $\Delta(\lambda)$ are global Weyl modules\footnote{Our functorial definition being equivalent to the standard definition by generators and relations in current algebra literature follows from the fact that the current algebra is generated by nilpotent elements (cf. \cite{steinberg1962generateurs},xxx sorger), so that all representations whose nilpotent element acts nilpotently (including the global Weyl module) are automatically integrable to the current group.} $W(\lambda):=\mathrm{Ind}_{H\ltimes N[t]}^{G[t]}(k_\lambda)$ where $N[t]$ acts trivially on $k_\lambda$ and $H$ acts by the character $\lambda$. It follows that this gives the (abelian) category of ungraded representation a generalized highest-weight structure, and their endomorphism is explicitly known \cite{bennett2011homomorphisms}. This already gives us Theorem 6 for current group representations:

\loc*

We now proceed to the compactly supported analogue of this local decomposition:

\locc*

We first prove the pull back functor $\D(\B G(\co))\longrightarrow\D(\gr)$ is fully faithful. We need to calculate the cohomology of tautological bundles on the affine Grassmannian. This is equal to the induction functor $\rep\, G(\co)\longrightarrow\rep\,G(\mathcal{K})$ at level $k=0$, which is exactly the group cohomology, i.e. cohomology on $\B G(\co)$. This gives the semiorthogonal relations between components of the same level, and the affine Borel-Weil-Bott theorem gives the semiorthogonal relations between components of different levels. We then prove that the right hand side generates the left hand side, this is in fact a general fact for Kac-Moody flag variety. We refer readers to \cite{kumar2012kac,marquis2018introduction} for the basic facts about Kac-Moody groups. Note that $\gr$ is the partial flag variety of the affine Kac-Moody group $\mathbb{G}=\gm\ltimes \widehat{G(\mathcal{K})}$ quotient by its parabolic subgroup $\mathbb{P}=\gm\ltimes G(\co)\times \gm$, and the right hand side of Theorem 7 are exactly the image of $\rep\,\mathbb{P}$ (the first $\gm\subset \mathbb{P}$ does not produce nontrivial bundle on $\gr$). We only need to show that the pull back $\rep\,\mathbb{P}\simeq\D(\B\mathbb{P})\longrightarrow \D(\mathbb{G}/\mathbb{P})$ generates the target. Kac and Peterson \cite{kac1983regular,kac1985defining} (also c.f. \cite{tits2006groups}) proved that the Kac-Moody groups are (infinite-dimensional ind) affine algebraic groups satisfying a generalization of the Peter-Weyl theorem, which particularly implies that the coordinate ring $\Gamma(\mathbb{G},\co)$ viewed as $\mathbb{P}$ representation lies in $\mathrm{Pro(}\rep\,\mathbb{P})$ (i.e. is generated by the finite-dimensional representations). In other words, we have a pro-algebra $A=\Gamma(\mathbb{G},\co)\in\mathrm{Pro(}\rep\,\mathbb{P})$, whose specturm is $\mathbb{G}$ with compatible $\mathbb{P}$ action, and the relative spectrum functor over $\B\mathbb{P}$ gives $\mathbb{G}/\mathbb{P}\longrightarrow\B\mathbb{P}$. In particular, the pull back of $A=\Gamma(\mathbb{G},\co)\in\rep\,\mathbb{P}$ to $\mathbb{G}/\mathbb{P}$ generates $\D(\mathbb{G}/\mathbb{P})$\footnote{Here it's crucial to consider the bounded derived category $\D$ instead of $\mathrm{D}$ as the natural t-structure on the full derived category of quasi-coherent sheaves on infinite dimensional ind-schemes say $\mathbb{A}^\infty=\mathrm{colim}_n\, \mathbb{A}^n$ is typically not complete, and the global section of objects in degree $\infty$ vanish. Also, by restricting to the bounded part we have neglected the difference between quasi-coherent sheaves and ind-coherent sheaves.} , hence we see that the right hand side of Theorem 7 generates $\D(\gr)$.

% Note that as $\mathbb{G}$ admit a smooth and faithful linear representation (which is not true for $G(\mathcal{K})$ without central extension) $\mathbb{V}$, we have an embedding into the linear space $\rho:\mathbb{G}\longrightarrow\Hom(\mathbb{V},\mathbb{V})$, the global section functor $\Gamma(\mathbb{G},-)=\Gamma(\Hom(\mathbb{V},\mathbb{V}),\rho_*(-))$ hence is conservative on $\D(\mathbb{G})$, similarly the push forward along $\pi:\mathbb{G}/\mathbb{P}\longrightarrow\B\mathbb{P}$ is conservative on $\D(\mathbb{G}/\mathbb{P})$, and the image under pull back $\pi^*:\rep\,\mathbb{P}\simeq\D(\B\mathbb{P})\longrightarrow \D(\mathbb{G}/\mathbb{P})$ generates $\D(\mathbb{G}/\mathbb{P})$.

% $ \D(\mathbb{G}/\mathbb{P})=\co_\mathbb{G}\textit{-}\mathrm{mod}(\rep\,\mathbb{P})$

% we need the localization theorem 7.15.8 in \cite{beilinson1991quantization}, which implies that for $E\neq 0$ the cohomology of its induced (twisted) D-module (as a quasi-coherent sheaf) $H^*(\gr,E\otimes \mathcal{D}_k)\neq 0$ if $k<0$. Here $\mathcal{D}_k$ is the differential operator on $\gr$ by the line bundle $\cdl^{\otimes k}$. Note that $\mathcal{D}_k$ has a natural filtration by order with associated graded pieces $\sym ^nT_{\gr}$, we see that $\Hom^*(\sym ^n\Omega_{\gr}, E)\neq 0$ for some $n$. $T_{\gr}$ (and hence $\sym ^nT_{\gr}$ and $\sym ^n\Omega_{\gr}$) are in the image of $\D(\B(\gm\times G(\co)))\longrightarrow\D(\gr)$, so we see that these tautological bundles on the right hand side indeed generate $\D(\gr)$. 

To globalize, we need to observe that the definition of global Weyl modules still makes sense if we replace $\g[t]$ by $g\otimes A$ for any commutative algebra $A$ (in our case the coordinate ring of an affine chart on the curve) and is functorial with respect to localization. For more general $A$ it is also studied in \cite{feigin2004multi,chari2010categorical} (see \cite{braverman2014weyl,kato2018demazure} for equivalent definitions as cohomology of bundles on semi-infinite flag variety, which also globalize in a straightforward way): 

More formally, given an affine curve $\mathrm{Spec\,} A$, we may define the global weyl module
$$W_A(\lambda):=\mathrm{Ind}_{H\ltimes N(A)}^{G(A)}(k_\lambda)$$

As observed in \cite{chari2010categorical}, this is a $(A_\lambda, g\otimes A)$ bimodule, where $A_\lambda=\bigotimes (A^{\otimes \lambda_i})^{S_{\lambda_i}}$ so that $$\mathrm{Spec\,}A_\lambda= \sym^{\lambda_1}(\mathrm{Spec\,}A)\times\cdots \times \sym^{\lambda_r}(\mathrm{Spec\,}A)=\sym^\lambda (\mathrm{Spec\,}A)$$

where the $A_\lambda$ action is induced from the action of $H(A)\subset \mathrm{N}_{H\ltimes N(A)}^{G(A)}$, so that $W_A(\lambda)$ naturally lies in $ A_\lambda\,\mathrm{mod}\otimes \rep (G(A))$.

Given a Zariski localization $A\longrightarrow A[f^{-1}]$, we have a natural injection $W_A(\lambda)\longrightarrow W_{A[f^{-1}]}(\lambda)$, which naturally factors through the localization of modules along the ring map $A_\lambda\longrightarrow A[f^{-1}]_\lambda$ and get an injection $W_A(\lambda)\otimes_{A_\lambda}A[f^{-1}]_\lambda\longrightarrow W_{A[f^{-1}]}(\lambda)$. By the character calculation in \cite{chari2010categorical} each side has equal dimension on each stalk, hence this is an isomorphism, i.e. the functor $W_{(-)}(\lambda)$ maps localization of rings to localization of modules. Same argument works for formal completion at a point $A\longrightarrow \widehat{A}_x $. Note that we have a natural functor $\rep (G(A))\longrightarrow\D(\bun_G^\xi)$ from the restriction map $\bun_G^\xi(X)\longrightarrow\bun_G^\xi(\mathrm{Spec\,}A)=\B G(A)$ for any trivialization of $\lambda\circ\xi$ on $\mathrm{Spec\,}A$, where we used the fact that all $G$ bundles are trivial on each chart $\mathrm{Spec\,}A$ by Drinfeld-Simpson \cite{drinfeld1995b}. Also note that for any curve $X$, $\sym^\lambda X$ is covered by $\sym^\lambda U$ for affine charts $U$, i.e. $\sym^\lambda X=\mathrm{colim\,}\sym^\lambda U$. Hence the global Weyl module for affine charts canonically glue to a global Weyl module $W_X(\lambda)\in \D(\sym^\lambda X\times \bun_G^\xi)$. Note that $W_X(\lambda)$ depends on a global trivialization of $\lambda\circ\xi$, which always exists as $H^2(X,\gm)=0$, but in general not unique. A more canonical way is to define $W_X(\lambda)\in \D(\sym^\lambda X\times \bun_G^\xi,\lambda\circ \xi)$ as a twisted vector bundle, but we will omit the (trivial but not canonically trivialized) twisting unless necessary. It is convenient to view the disjoint union of $\sym^\lambda X$ as the moduli space of divisors in $X$ colored by the dominant weights of $G$, any colored divisor $\mathbf{x}=\sum \lambda_ax_a$ can naturally be viewed as a point in $ \sym^\lambda X$ where $\lambda=\lambda(\mathbf{x})$ is defined to be $\sum \lambda_a$.

The Fourier-Mukai transforms $W_\lambda:\D(\sym^\lambda X)\longrightarrow\D(\bun_G^\xi)$ associated with kernel $W_X(\lambda)\in \D(\sym^\lambda X\times \bun_G^\xi)$ are the functors we need in our semiorthogonal decomposition. The tensor product result in section 5.3 of \cite{chari2010categorical} implies that away from the diagonal of $\sym^\lambda X$ the image of $W_\lambda$ factorizes, more precisely, for two disjoint colored divisors $\mathbf{y},\mathbf{z}$ we have $$W_{\lambda(\mathbf{y}+\mathbf{z})}(\co_{\mathbf{y}+\mathbf{z}})=W_{\lambda(\mathbf{y})}(\co_{\mathbf{y}})\otimes W_{\lambda(\mathbf{z})}(\co_{\mathbf{z}})$$

So we only need to understand $W_{\lambda(\mathbf{x})}(\co_{\mathbf{x}})$ for $\mathbf{x}=\lambda x$ on the small diagonal of $\sym^\lambda X$, which corresponds to the local Weyl module on affine neiborhood $\mathrm{Spec\,}A$ of $x$ defined as follows:

$$W_A^{\mathrm{loc}}(\lambda):=\mathrm{Ind}_{B(A)}^{G(A)}(k_\lambda)$$

where $B(A)$ action on $V_\lambda$ factors through projection $B\longrightarrow B/N=H$ and evaluation at $x$. This is a finite-dimensional representation of $G(A)$ which factors through the map $G(A)\longrightarrow G(\co_x)$ (because Weyl module is compatible with completion), and as a $G$ representation it has highest weight $\lambda$ with one-dimensional weight space, whose character is calculated in \cite{chari2010categorical} and the reference therein.

Now we obtain our main theorem:

\gs*

\begin{proof}
    Now that we have defined all the functors, we only need to calculate the space of homomorphisms:

    $$\Hom^*_{\D(\bun_G^\xi)}(\mathcal{L} ^{\otimes k}\otimes W_{\lambda(\mathbf{y})}(\co_{\mathbf{y}}), \mathcal{L} ^{\otimes h}\otimes W_{\lambda(\mathbf{z})}(\co_{\mathbf{z}}))$$

    By the factorization property, this is equal to 
$$H^*({\bun_G^\xi}, \mathcal{L} ^{\otimes (h-k)}\otimes (\bigotimes_i(W_{\mu_i}(\co_{\mathbf{y}_i})^\vee \otimes W_{\nu_i}(\co_{\mathbf{z}_i}))))$$ 
where we write $\mathbf{y}=\sum \mathbf{y}_i$, $\mathbf{z}=\sum \mathbf{z}_i$, so that $\mathbf{y}_i=\mu_i x_i$ and $\mathbf{z}_i=\nu_i x_i$ for some points $x_i$ and dominant weights $\mu_i,\nu_i$ (which could possibly be $0$).

Now we can apply our generalized Borel-Weil-Bott theory and highest weight structure of current group representation to see that this is always $0$ if $h<k$ or $h=k$ and $\lambda(\mathbf{y})=\sum\mu_i\not\leq \lambda(\mathbf{z})=\sum \nu_i$ (so that $\mu_i\not \leq \nu_i$ for some $i$). Hence we get the semiorthogonal decomposition. 

To prove the complement of tautological bundles vanishes, again we need Weil uniformization and homological contractibility of $G(K_X)$. We write $\bun_G^\xi=\prod \mathrm{Gr}_{G,x}/G(K_X)^\xi$, as we have seen in Theorem 7 the tautological bundles from $\rep\, G(\co)\longrightarrow \D(\gr)$ together with the determinant line bundle $\cdl$ generates the derived category of the affine grassmannian. These generators are all eigensheaves with respect to the $G(K_X)^\xi$ action, so the subset of generators that are fixed by the $G(K_X)^\xi$ (i.e. has trivial eigenvalue $G(K_X)^\xi\longrightarrow \B\gm$) form a set of generators for $\bun_G^\xi$.  They are exactly the bundles such that the level $k_x$ on each $\mathrm{Gr}_{G,x}$ agree, i.e., pull back from a power of basic line bundle $\cdl^{\otimes k}$. So we see the generation by tautological bundles.
\end{proof}

For a non-projective curve $X$, we can either run the same proof or use its natural compactification to get the analogous theorem:

\gsaf*

In the remaining part of this section, we study the Hecke action on $\D(\bun_G)$ and its relation with the global Demazure modules. This part is independent of the rest of the paper.

Affine Demazure modules are defined as sections of tautological vector bundles on the affine Schubert varieties, and similar to the Weyl module, it has a global analogue defined using the Beilinson-Drinfeld Grassmannian, which was studied in \cite{dumanski2021beilinson} for $X=\mathbb{A}^1$. Instead of $\grr$ over Ran space, we consider its pull-back $\mathrm{Gr}_{G,\sym^\lambda X}$ along $\sym^\lambda X\longrightarrow\ran$ and its locally closed Schubert cell $\gr^\lambda\subset \mathrm{Gr}_{G,\sym^\lambda X}$ (we refer the readers to \cite{zhu2016introduction,dumanski2021beilinson} for detailed definitions), its closure $\overline{\gr^\lambda}$ is called affine Schubert varieties. Note that $\mathrm{Gr}_{G,\sym^\lambda X}$ admits natural maps to $\sym^\lambda X$ and to $\bun_G$, hence so does $\overline{\gr^\lambda}\subset \mathrm{Gr}_{G,\sym^\lambda X}$, this gives a (finite dimensional) correspondence between $\sym^\lambda X$ and $\bun_G$ and an interesting Fourier-Mukai functor. The fibers of this correspondence are natural global analogue of the slices of affine Grassmannian, which carry interesting symplectic singularities.

More generally, we want to consider the Hecke stack $\hk\longrightarrow \sym^\lambda X\times \bun_G\times \bun_G$, whose fibers over $(x_i,P_1,P_2)$ are isomorphisms between $P_1,P_2$ defined outside $x_i$ such that the poles at $x_i$ are bounded by the corresponding weight. If we take $P_1$ to be the trivial $G$-bundle, this recovers exactly $\overline{\gr^\lambda}$, and other fibers are exactly twisted versions of this construction. Hence, this correspondence defines a functor $\D(\sym^\lambda X)\otimes \D(\bun_G)\longrightarrow \D(\bun_G)$, and we may directly check that this gives an action of $\bigoplus_\lambda\D(  \sym^\lambda X)$ on $\D(\bun_G)$. $-\otimes \cdl^{\otimes k}$ gives a functor $\D(  \sym^\lambda X)\longrightarrow \D(\bun_G)$ whose Fourier-Mukai kernel is the global Demazure module $W_\lambda^k$. For an affine curve $X$ we may consider its compactification, and then our definition agrees with the one in \cite{dumanski2021beilinson}.

When $X=\mathbb{A}^1$, it is known in \cite{kato2022higher} that for $k\geq 1$ $W_\lambda^{k+1}$ admits a filtration by $W_\mu^{k}$ for $\mu\leq\lambda$ (more precisely $\mu$ should be an extremal weight of the level-k integrable highest weight representation of affine Lie algebra whose highest weight is $\lambda$). Moreover, if we define $W_\lambda^0=W_\lambda$ as the global Weyl module defined before, and $W_\lambda^{-k}=\Hom_{\sym^\lambda \mathbb{A}^1}(W_{\lambda^*}^{k},\co_{\sym^\lambda \mathbb{A}^1})$, $W_\lambda^{k+1}$ admits a filtration by $W_\mu^{k}$ for $\mu\leq\lambda$ for all $k$, this also holds for all integers $k$. Also in \cite{kato2022higher} it is proved that for each $k$ this gives rise to a highest weight structure of the representation of the current group $G[t]$. Moreover by induction on $k$ we see that $\mathrm{End}(W_\lambda^k)$ is exactly given by the functions on $\sym^\lambda \mathbb{A}^1$. With our definition it is straightforward to extend these highest weight structures to a general curve $X$, and to the affine Grassmannian (which may be viewed as an affine analogue of \cite{samokhin2024highest}).

   \begin{theorem}
For a smooth projective curve $X$ we have the following semiorthogonal decomposition
\begin{equation*}
    \D(\bun_G^\xi)=\langle W^k_\lambda \rangle
\end{equation*}
where $W^k_\lambda$ are the global Demazure modules defined above viewed as a functor $W^k_\lambda:\D(\sym^\lambda X)\longrightarrow\D(\bun_G^\xi)$ labelled by $k\in \mathbb{Z}$, and dominant weights $\lambda\in\Lambda^+\simeq\N^{r}$.
\end{theorem}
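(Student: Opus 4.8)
The plan is to deduce this from the semiorthogonal decomposition of $\D(\bun_G^\xi)$ by global Weyl modules established above, $\D(\bun_G^\xi)=\langle \mathcal{L}^{\otimes k}\otimes W^0_\lambda(\D(\sym^\lambda X))\rangle$ with $W^0_\lambda=W_\lambda$, by showing that the global Demazure collection $\{W^k_\lambda\}$ is obtained from the global Weyl collection $\{\mathcal{L}^{\otimes k}\otimes W^0_\lambda\}$ through a level-preserving, block-triangular and invertible change of Fourier--Mukai kernels, which turns one semiorthogonal decomposition into another. The two inputs I would import from \cite{kato2022higher} and globalize from $X=\mathbb{A}^1$ exactly as the global Weyl modules were globalized above are: (a) for each fixed level $k$, the modules $\{W^k_\lambda\}$ form a highest weight structure on $\rep G(\co)$ with $\mathrm{End}(W^k_\lambda)$ the functions on $\sym^\lambda$; and (b) the filtration presenting $W^{k+1}_\lambda$ in terms of $W^k_\mu$, $\mu\le\lambda$, with $\mu=\lambda$ of multiplicity one, which — once the passage from level $k$ to $k+1$ is recorded on $\sym^\lambda X\times\bun_G^\xi$ as a twist by $\mathcal{L}$ — becomes an iterated-cone presentation of $W^{k+1}_\lambda$ from $\{\mathcal{L}\otimes W^k_\mu:\mu\le\lambda\}$.

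First I would make the global Demazure kernels $W^k_\lambda\in\D(\sym^\lambda X\times\bun_G^\xi)$ precise via the twisted Hecke stack $\hk$ as above (following \cite{dumanski2021beilinson} over $\mathbb{A}^1$), and record $W^0_\lambda=W_\lambda$ together with the definition $W^{-k}_\lambda=\Hom_{\sym^\lambda X}(W^k_{\lambda^*},\co_{\sym^\lambda X})$. Then I would check that the structural properties used in the proof of the global Weyl decomposition carry over verbatim: the Hecke action factorizes over disjoint colored divisors (composition of Hecke modifications at disjoint points), and the kernels are compatible with Zariski localization and formal completion on $X$ — so any Hom between them on $\bun_G^\xi$ reduces to a one-point computation, and the latter to the local Demazure modules $W^{k,\mathrm{loc}}$; for $k<0$ all of this is read off from the $k\ge0$ case by the duality defining $W^{-k}_\lambda$.

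For semiorthogonality and full faithfulness, I would compute, exactly as in the Weyl case, $\Hom^*_{\D(\bun_G^\xi)}(W^{k_1}_{\lambda_1}(\mathcal{F}_1),W^{k_2}_{\lambda_2}(\mathcal{F}_2))$: the factorization property rewrites it as the cohomology on $\bun_G^\xi$ of $\mathcal{L}^{\otimes(k_2-k_1)}$ tensored with a product of local Demazure bundles and their duals, where the duality defining $W^{-k}_\lambda$ brings the dual of the first kernel down to level $-k_1$. The $k<0$ vanishing in the Borel--Weil--Bott corollary above kills this when $k_2<k_1$; when $k_2=k_1$ the $k=0$ Borel--Weil--Bott theorem (the group-cohomology formula) identifies it with $\bigotimes_i H^*(G(\co_{x_i}),(W^{k,\mathrm{loc}}_{\mu_i})^\vee\otimes W^{k,\mathrm{loc}}_{\nu_i})=\bigotimes_i\Hom^*_{\rep G(\co)}(W^{k,\mathrm{loc}}_{\mu_i},W^{k,\mathrm{loc}}_{\nu_i})$, which by the level-$k$ highest weight structure (a) vanishes unless $\mu_i\le\nu_i$ for all $i$ and computes $\Hom^*_{\sym^\lambda X}(\mathcal{F}_1,\mathcal{F}_2)$ when all $\mu_i=\nu_i$. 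This gives the semiorthogonal relations — blocks ordered by increasing $k$, within a level by any linear extension of the dominance order — and the full faithfulness of each $W^k_\lambda$.

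Finally, generation: by the global Weyl decomposition the $\mathcal{L}^{\otimes k}\otimes W^0_\lambda(\D(\sym^\lambda X))$ already generate $\D(\bun_G^\xi)$, so it suffices that the two collections span the same triangulated subcategory. Property (b), iterated by descending induction on the dominance poset at each fixed $k\ge0$ (and dualized for $k<0$), exhibits $W^k_\lambda$ as an iterated cone of $\{\mathcal{L}^{\otimes k}\otimes W^0_\mu:\mu\le\lambda\}$ in which $\mathcal{L}^{\otimes k}\otimes W^0_\lambda$ occurs once; the transformation being triangular with invertible diagonal for the locally finite dominance order within each level, it is reversible, so the two collections generate the same subcategory, namely all of $\D(\bun_G^\xi)$, and the semiorthogonality above upgrades this to the asserted semiorthogonal decomposition. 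I expect the real work to lie in two places: making the globalization of \cite{kato2022higher} from $\mathbb{A}^1$ to a projective $X$ and to the twisted stack $\bun_G^\xi$ genuinely rigorous — in particular tracking the determinant line bundle $\mathcal{L}$ under Hecke modifications, which is what makes raising the level by one correspond to a twist by $\mathcal{L}$ in (b) — and checking that the Kato filtration is truly unitriangular after this level shift, i.e. that beyond $\mathcal{L}$-twists of the $W^k_\mu$ with $\mu<\lambda$ the module $W^{k+1}_\lambda$ acquires exactly one copy of $\mathcal{L}\otimes W^k_\lambda$, so that the induction on $(k,\lambda)$ inverts and no blocks collapse.
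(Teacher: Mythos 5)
Your proposal is correct and agrees in substance with what the paper leaves implicit: the paper only recalls Kato's highest weight structure and filtration on $W^k_\lambda$ over $\mathbb{A}^1$, asserts that they globalize, and then states the theorem, relying on the same Borel--Weil--Bott and factorization machinery used for the Weyl-module decomposition. Your filling-in of the semiorthogonality computation (via $\cdl^{\otimes(k_2-k_1)}$ and the $k<0$ vanishing, then the group-cohomology formula at $k_2=k_1$, then the level-$k$ highest weight structure and the identification of $\mathrm{End}(W^k_\lambda)$ with functions on $\sym^\lambda X$) is exactly the paper's argument for the Weyl case transposed to Demazure kernels. Where you diverge usefully is in how you establish generation: you invoke the unitriangular change of Fourier--Mukai basis given by Kato's filtration (level shift recorded as an $\mathcal{L}$-twist, $\mu=\lambda$ with multiplicity one), so the Demazure collection spans the same triangulated subcategory as $\{\mathcal{L}^{\otimes k}\otimes W^0_\lambda\}$, and generation is inherited from Theorem 5. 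The paper does not spell out a generation argument for this theorem --- it could alternatively be run through the Kac--Moody/Peter--Weyl argument used for Theorem 8, or via the inverse Hecke limit the paper sketches as an afterthought --- so your change-of-basis route is a clean, correct way to close that gap, and you correctly flag the two things that actually need care: verifying that raising the level by one in Kato's filtration becomes an $\mathcal{L}$-twist after globalization to projective $X$ (this holds because the global kernel at level $k$ is $\cdl^{\otimes k}$ tensored with the tautological bundle of the local Demazure module, so the filtration, which is a filtration of $G(\co)$-modules, tensors through), and that the diagonal term appears with multiplicity exactly one so the change of basis inverts.
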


   \begin{theorem}
We have the following semiorthogonal decomposition for $\gr$:
\begin{equation*}
    \D(\gr)=\langle W^k_\lambda \rangle
\end{equation*}
where $W^k_\lambda$ are the global Demazure modules defined above viewed as a sheaf on $\gr$ labelled by $k\in \mathbb{Z}$, and dominant weights $\lambda\in\Lambda^+\simeq\N^{r}$.
\end{theorem}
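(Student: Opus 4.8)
The plan is to deduce this decomposition of $\D(\gr)$ from the decomposition of $\D(\bun_G^\xi)$ by the Demazure modules (the preceding theorem, with $\xi$ trivial) together with the already-established decomposition of $\D(\gr)$ by the global Weyl modules (Theorem \ref{locc}-type statement). The key structural fact is that, for $X = \mathbb{A}^1$ (or any affine chart), Kato's filtration result quoted above says that $W_\lambda^{k+1}$ admits a filtration whose subquotients are the $W_\mu^k$ with $\mu \le \lambda$, and that for each fixed $k$ the modules $\{W_\lambda^k\}_\lambda$ form the standard objects of a generalized highest weight structure on $\rep(G[t])$ with $\mathrm{End}(W_\lambda^k) = \Gamma(\sym^\lambda \mathbb{A}^1, \co)$. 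First I would globalize this: since the $W_\lambda^k$ were constructed via the Beilinson--Drinfeld Grassmannian over $\sym^\lambda X$ in a way compatible with restriction to affine charts, the filtration and the Ext-vanishing $\mathrm{Ext}^*(W_\lambda^k, W_\mu^k) = 0$ unless $\lambda \le \mu$ glue, exactly as the global Weyl module did; this promotes each level $k$ to a (generalized) highest weight block of $\D(\gr)$ cut out by the $W_\lambda^k$.

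Second, I would verify semiorthogonality \emph{between} levels. The Fourier--Mukai kernel $W_\lambda^k$ differs from $W_\lambda^0 = W_\lambda$ by tensoring with $\cdl^{\otimes k}$ along the Hecke correspondence, so $\Hom^*_{\D(\gr)}(W_\lambda^k(\co_{\mathbf{y}}), W_\mu^h(\co_{\mathbf{z}}))$ is computed, after the factorization reduction used in the proof of Theorem \ref{gs}, by a cohomology group of the form $H^*(\gr, \cdl^{\otimes(h-k)} \otimes (\text{local Demazure/Weyl data at the points}))$. The affine Borel--Weil--Bott theorem (Corollary after Theorem \ref{bwb}) forces this to vanish for $h < k$, since negative powers of $\cdl$ have no cohomology on $\gr$; the mixed-level filtration then handles the boundary case $h=k$ by reducing to the within-level Ext-vanishing already established. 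This gives the semiorthogonal order: lower $k$ maps to higher $k$, and within a level less dominant $\lambda$ maps to more dominant $\lambda$.

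Third, I would prove generation: the family $\{\cdl^{\otimes k} \otimes W_\lambda^0\}$ already generates $\D(\gr)$ by the global-Weyl-module theorem (proved above via the Kac--Peterson/Peter--Weyl description of $\gr$ as a Kac--Moody partial flag variety, whose coordinate ring as a $\mathbb{P}$-representation lies in $\mathrm{Pro}(\rep\,\mathbb{P})$). Running the filtration in the opposite direction — expressing $W_\lambda^0 = W_\lambda$ and its $\cdl$-twists in terms of the $W_\mu^k$ via the filtrations relating consecutive levels — shows the two generating families span the same thick subcategory, hence the $W_\lambda^k$ generate. Assembling the three ingredients (within-level highest weight blocks, between-level semiorthogonality via affine BWB, generation) yields the asserted semiorthogonal decomposition $\D(\gr) = \langle W_\lambda^k \rangle$.

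The main obstacle I expect is the globalization and the boundary case $h = k$ simultaneously: Kato's results are stated for $X = \mathbb{A}^1$, and while the construction of $W_\lambda^k$ via $\mathrm{Gr}_{G,\sym^\lambda X}$ is manifestly local on the curve, one must check that the filtrations — not just the modules — are compatible with the gluing $\sym^\lambda X = \mathrm{colim}\, \sym^\lambda U$ and with the factorization isomorphisms away from diagonals, so that the subquotients of the global $W_\lambda^{k+1}$ are genuinely the global $W_\mu^k$ as $\co_{\sym^\mu X}$-linear Fourier--Mukai kernels. Equivalently, one needs that forming $\mathrm{End}(W_\lambda^k)$ and the filtration steps commute with the base change along affine charts, which should follow from the flatness/character computations of \cite{chari2010categorical,kato2022higher} exactly as in the Weyl-module case, but this is the step where the argument is least formal.
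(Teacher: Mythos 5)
The paper itself offers essentially no proof of this statement beyond a sketch: it quotes Kato's filtration result (that $W_\lambda^{k+1}$ admits a filtration by $W_\mu^k$, $\mu\le\lambda$), records that for each fixed $k$ this yields a highest-weight structure with $\mathrm{End}(W_\lambda^k)\simeq\Gamma(\sym^\lambda\mathbb{A}^1,\co)$, and asserts that "it is straightforward to extend these highest weight structures to a general curve $X$, and to the affine Grassmannian." After stating the theorem, the paper gestures at a \emph{different} generation mechanism than yours: applying the Hecke operators labelled by $\lambda$ and passing to the inverse limit as $\lambda\to\infty$ in the dominant cone, then decomposing by $G(\K)$-level. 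Your step 1 is consistent with this sketch, but your steps 2 and 3 introduce arguments the paper does not make, and each has a gap.

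In step 2 you reduce $\Hom^*(W_\lambda^k,W_\mu^h)$ to a group of the form $H^*(\gr,\cdl^{\otimes(h-k)}\otimes(\text{local data}))$ and invoke affine Borel--Weil--Bott to kill it for $h<k$. This presupposes that $W_\lambda^k$ is a $\cdl^{\otimes k}$-twist of a fixed level-independent object. If that were literally true, the Demazure decomposition would be a reindexing of the Weyl decomposition of Theorem~\ref{locc}, which it is not: by convention $W_\lambda^0:=W_\lambda$ is the global Weyl module (a rank-$\dim V_\lambda$ module over $\co_{\sym^\lambda}$), while for $k\ge 1$ the Demazure kernel is the pushforward of $\cdl^{\otimes k}$ from $\overline{\gr^\lambda}$, which by the projection formula is $\cdl^{\otimes k}\otimes i_{\lambda*}\co$; these two families are genuinely different and are related only by Kato's filtration. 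Moreover, the affine BWB vanishing (Corollary after Theorem~\ref{bwb}) is stated for $\cdl^{\otimes m}$ twists of \emph{tautological} bundles pulled back from $\B G(\co)$, whereas the complex $\mathcal{H}\!\mathit{om}(i_{\lambda*}\co,i_{\mu*}\co)$ appearing in your computation is supported on Schubert varieties and is not of that form, so the vanishing you need does not follow directly from the stated corollary.

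Step 3 is where the argument actually breaks. The filtration gives $W_\lambda^{k+1}\in\langle W_\mu^k:\mu\le\lambda\rangle$, and by induction one can invert this to express $W_\mu^k$ inside $\langle W_\lambda^{k+1}\rangle$; but this only moves you between adjacent Demazure levels. It never produces the objects $\cdl^{\otimes k}\otimes W_\lambda$ that generate $\D(\gr)$ by Theorem~\ref{locc} — the $\cdl$-twist and the filtration are independent pieces of structure, and Kato's result does not relate $\cdl\otimes W_\lambda^k$ to the $W_\mu^{k+1}$. So "the two generating families span the same thick subcategory" is asserted but not argued. This is also precisely where the paper diverges: its generation argument uses the Hecke inverse limit and the $G(\K)$-level decomposition of the resulting object, not the filtration. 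You correctly flag globalization of the filtration as a concern, but the more serious gap is this missing bridge between the Demazure family and the $\cdl$-twisted Weyl family in the generation step.
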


The advantage of this semiorthogonal decomposition by global Demazure modules over the decomposition in Theorem 5 by Weyl modules is that it's compatible with the Hecke action, i.e. for fixed $k$, this just corresponds to the action of $\bigoplus_\lambda\D(  \sym^\lambda X)$ on itself as a monoid. Also this gives a canonical way to generate any coherent sheaf by tautological bundles: given any sheaf, we may apply the Hecke operator labelled by $\lambda$, and take the natural (inverse) limit as $\lambda\rightarrow\infty$ in the cone of dominant weights. This limit carries natural $G(\mathcal{K})$ action and can be decomposed into eigensheaves with eigenvalues labelled by levels $G(\mathcal{K})\longrightarrow\B\gm$. Hence we produce canonical maps between tautological generators and any coherent sheaf by Hecke actions.

\section{Reduction to coarse moduli}

In this section we reduce the study of the coarse moduli space of stable bundles $Bun_G^{\xi,ss}$ to that of moduli stack of all bundles $\bun_G^{\xi}$ which we did in previous sections. The main theorem in this section is:

\gnc*

We have natural maps \[
Bun_G^{\xi,ss}\longleftarrow \bun_G^{\xi,ss}/\B Z\longleftarrow \bun_G^{\xi,ss}\longrightarrow \bun_G^{\xi}
\]

The first map (from Artin stack $\bun_G^{\xi,ss}$ to its coarse moduli space) gives a categorical resolution of singularity, which is identity in the smooth case, and this indicates that we should replace $Bun_G^{\xi,ss}$ by $\bun_G^{\xi,ss}/\B Z$ (or its window subcategory). The second map is also easily understood on derived categories, it induces an equivalence $\D(\bun_G^{\xi,ss}/\B Z)\simeq\D (\bun_G^{\xi,ss})^{\B Z}$. So we only need to study the restriction along the third map. Theorem 3 is thus reduced to the following:

\begin{restatable}{theorem}{ss}
Let $X$ be a smooth projective curve of genus $g>1$, $G$ be a semisimple algebraic group of rank $r$, $Z(G)$ be the center of $G$ and $\xi\in H^2(X,Z(G))$, we have a semiorthogonal decomposition for $\bun_{G}^{\xi,ss}$:
\begin{equation*}
    \D(\bun_{G}^{\xi,ss})=\langle \mathcal{L}^{\otimes k}\otimes \D(\sym^{\lambda} X),\cdots\rangle
\end{equation*}
where $\mathcal{L}$ is the fundamental line bundle, $0\leq k\leq \h$, $(g-1)\rho-\lambda$ lies in the cone generated by positive roots, and lies in the interior if $k=\h$. Here $\rho$ is half the sum of positive roots, $h^\vee$ is the dual Coxeter number, and $\cdots$ is the semiorthogonal complement of the previous subcategories.

\end{restatable}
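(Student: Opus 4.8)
The plan is to deduce this theorem from Theorem~\ref{gs} (the semiorthogonal decomposition of $\D(\bun_G^\xi)$ into blocks $\mathcal{L}^{\otimes k}\otimes\D(\sym^\lambda X)$ indexed by all $k\in\mathbb{Z}$ and all dominant $\lambda$) by restriction along the open inclusion $j:\bun_G^{\xi,ss}\hookrightarrow\bun_G^\xi$, using the theory of derived $\Theta$-stratifications and ``magic windows''. The unstable locus $\bun_G^\xi\setminus\bun_G^{\xi,ss}$ carries the Harder--Narasimhan $\Theta$-stratification, and Halpern-Leistner's machinery says that $\D(\bun_G^{\xi,ss})$ is equivalent to a window subcategory $\mathcal{W}\subset\D(\bun_G^\xi)$ cut out by bounds on the weights of objects along each HN stratum (more precisely along the attracting loci $\bun_G^{\xi,\mu\text{-ss}}/\mathbb{G}_m$, with the weight measured against the destabilizing one-parameter subgroup). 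The first step is therefore to record the $\Theta$-stratification explicitly: for a parabolic $P\subset G$ with Levi $M$, the associated stratum is indexed by an $M$-dominant cocharacter, and the ``window width'' is governed by the sum $\langle\text{weight of }\mathcal{L}\text{ and of }\V\text{ along the stratum}\rangle$ versus the weight of the relative canonical bundle (the $\eta$-invariant of the stratum), which for $\bun$ of a genus-$g$ curve produces precisely the shift by $(g-1)\rho$ and the dual Coxeter number $\h$.

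The second step is to compute how each generator $\mathcal{L}^{\otimes k}\otimes W_{\lambda(\mathbf{x})}(\co_{\mathbf{x}})$ of Theorem~\ref{gs} behaves along the HN strata: I would restrict the tautological bundle $W_X(\lambda)$ to the attracting locus and use the factorization and highest-weight structure of the global Weyl modules (Section~4) to read off the $\mathbb{G}_m$-weights, which are controlled by the pairing of the destabilizing cocharacter with the weights appearing in the local Weyl module $W_A^{\mathrm{loc}}(\lambda)$, hence by $\langle\cdot,\lambda\rangle$ and its Weyl translates, all bounded by the highest weight $\lambda$. Combining this with the weight $k$ of $\mathcal{L}^{\otimes k}$ and the canonical shift, one gets exactly the inequality ``$(g-1)\rho-\lambda$ lies in the (closed) cone generated by positive roots, strictly in the interior when $k=\h$'', together with the range $0\le k\le\h$: the block $\mathcal{L}^{\otimes k}\otimes\D(\sym^\lambda X)$ lies in the window $\mathcal{W}$ if and only if this holds. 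The third step is to check that these windowed generators still generate $\mathcal{W}$ and retain their mutual semiorthogonality after restriction to $\bun_G^{\xi,ss}$; semiorthogonality is inherited from Theorem~\ref{gs} because restriction to an open substack is exact and a localization, and generation follows because the window subcategory is generated by the images of the generating blocks that land inside it (this is the standard statement that $j^*$ sends a chosen generating set of $\mathcal{W}$ to a generating set of $\D(\bun_G^{\xi,ss})$).

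The main obstacle, I expect, is making the window condition sharp in both directions: one must verify not only that blocks satisfying the stated inequalities \emph{land} in the window, but that no strictly larger collection of blocks does --- equivalently that a block with $(g-1)\rho-\lambda$ outside the positive cone, or with $k$ outside $[0,\h]$, genuinely fails the weight bound on \emph{some} HN stratum and so its restriction to $\bun_G^{\xi,ss}$ is redundant or zero. This requires a careful analysis of the worst-case stratum (typically the one for a maximal parabolic, where the destabilizing $\mathbb{G}_m$ sees the full weight $\lambda$), and a matching lower bound on the weights actually occurring in $W_A^{\mathrm{loc}}(\lambda)$ --- precisely the extremal weights $w\lambda$, $w\in W$ --- so that the boundary case $k=\h$ forces the \emph{strict} inequality. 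A secondary technical point is the boundary of the $k$-range: the endpoints $k=0$ and $k=\h$ behave asymmetrically (at $k=0$ the tautological bundles come from $\rep\,G(\co)$ via group cohomology, at $k=\h$ the affine Borel--Weil--Bott vanishing for $\cdl$ at the critical level enters), and one must confirm the stratum bound is met with the correct (non-)strictness at each end. Once the window is identified with the span of exactly the listed blocks, Halpern-Leistner's equivalence $\mathcal{W}\xrightarrow{\sim}\D(\bun_G^{\xi,ss})$ finishes the proof.
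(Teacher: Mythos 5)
Your overall strategy — reduce to $\D(\bun_G^\xi)$ via the Harder--Narasimhan $\Theta$-stratification and analyze $\gm$-weights along the unstable strata — is the same as the paper's, and the weight computation you sketch (extracting $(g-1)\rho$ and $\h$ from the destabilizing cocharacter and the local Weyl module structure) is on the right track. But two steps in your plan are wrong.

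First, the claim that ``semiorthogonality is inherited \ldots because restriction to an open substack is exact and a localization'' is false: open restriction is indeed a localization, but localizations \emph{create} morphisms rather than kill them (e.g.\ $\Hom_{\mathbb{P}^1}(\mathcal{O}(1),\mathcal{O})=0$ yet $\Hom_{\mathbb{A}^1}(j^*\mathcal{O}(1),j^*\mathcal{O})\neq 0$). The mechanism that actually preserves $\Hom$-vanishing is the vanishing of the local cohomology supported on the unstable locus, $\Gamma_{\bun_G^{\xi,us}}\bigl(\bun_G^\xi,\mathcal{H}om(E,F)\bigr)=0$, read off from the excision triangle $\Gamma_{us}\to\Gamma\to\Gamma_{ss}\to$. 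The paper's lower bound for the lowest $\gm$-weight of $\mathcal{H}om(E,F)$ on a stratum $\delta$, namely $\langle 2(g-1)\rho-(\lambda^*+\mu),\delta\rangle+(\h-(k-l))|\delta|^2$, depends on the \emph{pair} $(E,F)$, not on each block alone. Checking that each block ``lands in the window'' is not enough: you must verify the joint inequality for every pair you claim to be semiorthogonal. It does hold here because $(g-1)\rho-\lambda\geq 0$ and $(g-1)\rho-\mu\geq 0$ plus $\rho^*=\rho$ give $2(g-1)\rho-(\lambda^*+\mu)\geq 0$, but that is the actual argument, not a formality about restriction. The paper further notices that more blocks than the statement lists embed fully-faithfully, yet may have morphisms in both directions after restriction, and copes with that via a chain-vanishing argument on stratum-supported projection functors $P^\delta_{\mu\lambda}$; your proposal has no analogue of this step.

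Second, you claim to prove that the listed blocks \emph{generate} the window $\mathcal{W}\simeq\D(\bun_G^{\xi,ss})$, i.e.\ that the SOD is complete. The theorem does not say this — it explicitly retains a residual piece $\cdots$ (the semiorthogonal complement), and the paper does not prove that piece is zero. So ``$j^*$ sends a generating set of $\mathcal{W}$ to a generating set of $\D(\bun_G^{\xi,ss})$'' is beside the point unless you can show the chosen blocks actually exhaust $\mathcal{W}$, which neither you nor the paper establishes.
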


Following \cite{2020arXiv201001127H}, the basic idea is to apply the excision sequence to the internal homomorphism $H=\mathcal{H}om(E,F)\in \D(\bun_G^{\xi})$:

$$\Gamma_{\bun_G^{\xi,us}}(\bun_G^\xi,H)\longrightarrow \Gamma(\bun_G^\xi,H)\longrightarrow\Gamma(\mathrm{Bun}_G^{\xi,ss},H)\longrightarrow$$

When restricted to a suitable subcategory $\mathbf{C}\subset \D(\bun_G^\xi)$ (with bounded weight on isotropy groups in $\mathrm{Bun}_G^{\xi,us}$), the first term vanishes, the restriction functor to the semistable locus is fully-faithful and we get a subcategory of $\D(\mathrm{Bun}_G^{\xi,ss})$.

Let us study when there first term $\Gamma_{\bun_G^{\xi,us}}(\bun_G^\xi,H)=0$. We need to analyze each unstable stratum individually. Note that the Harder-Narasimhan stratification naturally gives rise to maps $Z_G^{\xi,\delta}\overset{\sigma_\delta}{\underset{\pi_\delta}{\rightleftarrows}}\bun_G^{\xi,\delta,us}\overset{i_\delta}{\rightarrow}\bun_G^\xi$ for each Harder-Narasimhan stratum labelled by the parabolic weight $\delta$. On its center $Z_G^{\xi,\delta}$ there is a natural $\B\gm$ action\footnote{$G$ action on a category gives $G$ action on the collection of objects, and $\B G$ action on a category gives $G$ action on each space of morphisms.} coming from $\xi$ with respect to which the weights of $\pi_{\delta*}(\co_{\bun_G^{\xi,\delta,us}})$ are non-negative, so a sufficient condition for $\Gamma_{\bun_G^{\xi,us}}(\bun_G^\xi,H)=0$ would be that the $\gm$ weights of $\sigma_\delta^*i_\delta^!H$ to be strictly positive. It's straightforward calculation that for $F\in \mathcal{L} ^{\otimes k}\otimes \D(\sym^\lambda X) $ and $E\in \mathcal{L} ^{\otimes l}\otimes \D(\sym^\mu X) $, the lowest $\gm$ weight is at least $<2(g-1)\rho-(\lambda^*+\mu),\delta>+(h^\vee-(k-l))|\delta|^2$, where $*$ means the Dynkin involution on the weight lattice coming from the duality in $\mathcal{H}om(E,F)=E^*\otimes F$ , and $h^\vee=h^\vee_G$ is the dual Coxeter number of $G$. So we see that so long as $2(g-1)\rho-(\lambda^*+\mu)$ lies in the positive cone (i.e. cone spanned by the positive roots) and $k-l\leq h^\vee$, and they do not lie on the boundary of either inequalities, the lowest weight is strictly positive, and we have $\Gamma_{\bun_G^{\xi,us}}(\bun_G^\xi,H)=0$. From this we directly see that the composition $\mathcal{L} ^{\otimes k}\otimes \D(\sym^\lambda X) \longrightarrow\D(\bun_G^\xi)\longrightarrow\D(\bun_G^{\xi,ss})$ is fully faithful assuming $2(g-1)\rho-(\lambda^*+\lambda)$ lies in the positive cone, and moreover the semiorthogonal relation of $ \mathcal{L} ^{\otimes k}\otimes \D(\sym^\lambda X) $ and $\mathcal{L} ^{\otimes l}\otimes \D(\sym^\mu X) $ in $\D(\bun_G^\xi)$ is preserved in $\D(\bun_G^{\xi,ss})$ if  $2(g-1)\rho-(\lambda^*+\mu)$ lies in the positive cone. Hence we get Theorem 13. Note that the components of this decomposition are weight subcategories of the $\B Z$ actions, we may directly take the ones with trivial $\B Z$ action, which are exactly those which satisfy the central character of $\mathcal{L}^{\otimes k}$ cancels with the central character of the representation with highest weight $\lambda$.

Note that this is strictly stronger than both  $2(g-1)\rho-(\lambda^*+\lambda)$ and  $2(g-1)\rho-(\mu^*+\mu)$ lying in the positive cone, so we get a number of full subcategories, only a subset of which are semiorthogonal. We need to understand the relation among others. First note that assuming both  $2(g-1)\rho-(\lambda^*+\lambda)$ and  $2(g-1)\rho-(\mu^*+\mu)$ lie in the positive cone, for a given $\delta$, at most one of $<2(g-1)\rho-(\lambda^*+\mu),\delta>, <2(g-1)\rho-(\mu^*+\lambda),\delta>$ can possibly be negative. In other words, there are morphisms at most in one direction supported on each Harder-Narasimhan stratum. This is a local (on $\bun_G^\xi$) version of the semiorthogonal relation among these subcategories, we want to derive a global consequence. 

Given full subcategories $\D(\sym^\lambda X),\D(\sym^\mu X)\subset \D(\bun_G^{\xi,ss})$, we have natural projections $P_{\lambda\mu}$ such that for $E_\lambda\in\D(\sym^\lambda X)$,$E_\mu\in\D(\sym^\mu X)$ we have $\Hom_{\bun_G^{\xi,ss}}(E_\mu,E_\lambda)=\Hom_{\sym^\mu X}(E_\mu,P_{\lambda\mu}(E_\lambda))$. We now show that $P_{\lambda_n\lambda_1}P_{\lambda_{n-1}\lambda_{n}}\cdots P_{\lambda_2\lambda_3}P_{\lambda_1\lambda_2}=0$ for any non-constant chain of weights $\lambda_1\lambda_2\cdots \lambda_n$ such that each $\D(\sym^{\lambda_k} X)\subset \D(\bun_G^{\xi,ss})$ is a full subcategory. From these projection functors it is straightforward to build a semiorthogonal decomposition of $\D(\bun_G^{\xi,ss})$ by the components we want. We can decompose each projection according to its support: from the previous weight argument any map in $\Hom_{\bun_G^{\xi,ss}}(E_\lambda,E_\mu)$ is supported only on $\bun_G^{\xi,\delta}$ such that $\langle2(g-1)\rho-(\lambda^*+\mu),\delta\rangle+h^\vee|\delta|^2\leq 0$, so we get a filtration on the projection functor $P_{\mu\lambda}$ with associated graded $P_{\mu\lambda}^\delta=Q_{\mu\lambda}^\delta R_{\mu\lambda}^\delta$ which naturally factors through the subcategory $\D(\bun_G^{\xi})_{\bun_G^{\xi,\delta}}$, more precisely the subcategory with $\gm$ weight bounded by $\lambda$ as $w\leq\langle \delta,\lambda^*\rangle$. To show  $P_{\lambda_n\lambda_1}P_{\lambda_{n-1}\lambda_{n}}\cdots P_{\lambda_2\lambda_3}P_{\lambda_1\lambda_2}=0$ we only need to show that there exist $i$ such that for all $\delta,\epsilon$, we have $P_{\lambda_i\lambda_{i+1}}^\epsilon P_{\lambda_{i-1}\lambda_{i}}^\delta=0$. So we now need to study when we have $P_{\mu\lambda}^\epsilon P_{\nu\mu}^\delta\neq 0$. By the factorization $P_{\mu\lambda}^\delta=Q_{\mu\lambda}^\delta R_{\mu\lambda}^\delta$, this implies that $R_{\mu\lambda}^\epsilon Q_{\nu\mu}^\delta\neq 0$, and moreover for some $\mathcal{P}\in \bun_G^{\xi,\delta},\mathcal{Q}\in \bun_G^{\xi,\epsilon} $ we have nonvanishing homomorphism $\Hom^*(\co_\mathcal{Q},R_{\mu\lambda}^\epsilon Q_{\nu\mu}^\delta\co_\mathcal{P})\neq 0$ for the skyscraper sheaves twisted by the $\gm$-character, in the category ${\D(\bun_G^{\xi})_{\bun_G^{\xi,\epsilon}}}$. But by adjunction, this homomorphism group is isomorphic to $\Hom^*_{\sym^\lambda X}(\mathcal{Q}^\lambda,\mathcal{P}^\lambda)$ where $S^\lambda=(W_\lambda)^R(\co_\mathcal{P})$ is the global Weyl module on $X$ twisted by the $G$-bundle $\mathcal{P}$.

% We now preceed to the proof of Theorem 3. First 

Note that taking the $\B Z$ invariant subcategory on both sides of Theorem 16, we directly obtain the following:

\begin{restatable}{theorem}{ss2}
Let $X$ be a smooth projective curve of genus $g>1$, $G$ be a semisimple algebraic group of rank $r$, $Z(G)$ be the center of $G$ and $\xi\in H^2(X,Z(G))$, we have a semiorthogonal decomposition for $\bun_{G}^{\xi,ss}$:
\begin{equation*}
    \D(\bun_{G}^{\xi,ss}/\B Z)=\langle \mathcal{L}^{\otimes k}\otimes \D(\sym^{\lambda} X),\cdots\rangle
\end{equation*}
where $\mathcal{L}$ is the fundamental line bundle, $0\leq k\leq \h$, $(g-1)\rho-\lambda$ lies in the cone generated by positive roots, and lies in the interior if $k=\h$, and moreover we have $k\xi+\bar\lambda=0\in Q$. Here $\bar\lambda$ is the equivalence class of $\lambda$ in $Q$, $\rho$ is half the sum of positive roots, $h^\vee$ is the dual Coxeter number, and $\cdots$ is the semiorthogonal complement of the previous subcategories.
\end{restatable}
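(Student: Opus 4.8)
The plan is to obtain this statement formally from Theorem 16 together with the identification $\D(\bun_G^{\xi,ss}/\B Z)\simeq\D(\bun_G^{\xi,ss})^{\B Z}$ recorded above, i.e. to execute the remark that the blocks of Theorem 16 are weight subcategories for the $\B Z$-action and one simply keeps those of trivial weight. Since $Z$ lies centrally in the automorphism group of every semistable $\xi$-twisted $G$-bundle, $\B Z$ acts on $\D(\bun_G^{\xi,ss})$; as $Z$ is a finite abelian group scheme in characteristic $0$, this action is semisimple and produces an orthogonal direct-sum decomposition
\begin{equation*}
\D(\bun_G^{\xi,ss})=\bigoplus_{\chi\in Q}\D(\bun_G^{\xi,ss})_{\chi},\qquad Q\simeq\Hom(Z,\gm),
\end{equation*}
indexed by characters of the center, with no morphisms between distinct summands and with the $\chi=0$ summand equal to $\D(\bun_G^{\xi,ss})^{\B Z}\simeq\D(\bun_G^{\xi,ss}/\B Z)$. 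So it suffices to check that the semiorthogonal decomposition of Theorem 16 is homogeneous for this grading — each block lying in a single summand — and to identify which summand.

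First I would pin down the weight of each block $\cdl^{\otimes k}\otimes\D(\sym^\lambda X)$, where the last factor is embedded via the global Weyl functor $W_\lambda$. The fundamental line bundle $\cdl$ has central character $\xi\in Q$ (checked while computing $\mathrm{Pic}(\bun_G^\xi)$), so $-\otimes\cdl^{\otimes k}$ shifts the $\B Z$-weight by $k\xi$. For $W_\lambda$, its Fourier–Mukai kernel $W_X(\lambda)$ is glued from the local Weyl modules $W_A(\lambda)=\mathrm{Ind}_{H\ltimes N(A)}^{G(A)}(k_\lambda)$, induced from the character $\lambda$ of $H$; the central $Z\subset G\subset G(A)$ acts on $k_\lambda$ through $\lambda$, so $W_A(\lambda)$, hence the restriction of $W_X(\lambda)$ to $\bun_G^\xi$, has $Z$-weight the image $\bar\lambda$ of $\lambda$ in $Q$. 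Equivalently $W_\lambda$ is $\B Z$-equivariant for the trivial action on $\sym^\lambda X$ and the weight-$\bar\lambda$ twist on $\bun_G^\xi$, so the whole triangulated subcategory $\cdl^{\otimes k}\otimes W_\lambda(\D(\sym^\lambda X))$ lands in $\D(\bun_G^{\xi,ss})_{k\xi+\bar\lambda}$. It matters that this holds for the entire subcategory, not merely its generators, but that is automatic since the generating kernel and line bundle are themselves homogeneous.

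Finally I would invoke the elementary fact: if $\mathcal C=\langle\mathcal C_1,\mathcal C_2,\dots\rangle$ is a semiorthogonal decomposition of a triangulated category with an orthogonal direct-sum decomposition $\mathcal C=\bigoplus_{\chi}\mathcal C_{\chi}$ for which each block $\mathcal C_i$ lies in a single $\mathcal C_{\chi_i}$, then the summand $\mathcal C_0$ inherits a semiorthogonal decomposition whose blocks are the $\mathcal C_i$ with $\chi_i=0$, in the same order, together with the $\chi=0$ part of the remaining semiorthogonal complement. Semiorthogonality of the retained blocks is literally inherited; for generation one notes that the right orthogonal $\langle\mathcal C_i\rangle^{\perp}$ is itself graded, since an object $E=\bigoplus_\chi E_\chi$ is right-orthogonal to every $\mathcal C_i$ iff each $E_\chi$ is right-orthogonal to the blocks of weight $\chi$; hence the complement inside $\mathcal C_0$ is exactly the weight-$0$ part of the ambient complement. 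Applying this with $\mathcal C=\D(\bun_G^{\xi,ss})$, its grading by $Q$, and the blocks of Theorem 16 — which carry weight $k\xi+\bar\lambda$ by the previous paragraph — keeps exactly the blocks with $k\xi+\bar\lambda=0$ in $Q$, while the inequalities $0\le k\le\h$ and the requirement that $(g-1)\rho-\lambda$ lie in the cone of positive roots (strictly if $k=\h$) are unchanged. This is the asserted decomposition of $\D(\bun_G^{\xi,ss}/\B Z)$.

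I do not expect a genuine obstacle: the argument is purely formal once Theorem 16 is in hand. The one point that needs care is the strict homogeneity of each block for the $\B Z$-action — that the whole subcategory, and not just the cohomology of individual objects, sits in a single weight — which is why one argues at the level of the kernel $W_X(\lambda)$ and of $\cdl$ rather than object by object; this is also where the identification $\Hom(Z,\gm)\simeq Q$ from Section 2 is used, so that the numerical condition reads $k\xi+\bar\lambda=0\in Q$.
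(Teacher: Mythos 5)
Your proposal is correct and follows essentially the same route the paper takes: the paper proves the ambient decomposition of $\D(\bun_G^{\xi,ss})$, remarks that its blocks are $\B Z$-weight-homogeneous with weight $k\xi+\bar\lambda$ (reading off the central characters of $\cdl$ and of the Weyl-module kernel), and then passes to $\B Z$-invariants, which is exactly your argument. You merely unpack the one-line remark — explicitly computing the weights of $\cdl$ and $W_X(\lambda)$ and stating the elementary lemma that a weight-homogeneous semiorthogonal decomposition restricts to each isotypic summand — and those details are accurate.
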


% Note that for large enough $\delta$, $|\delta|^2$ term dominates the $\gm$ weight, and there is no relavent morphism supported on the corresponding stratum. So the semiorthogonal relation is preserved upon restriction to a finite type open substack of $\bun_G^\xi$. And we will consider the remaining unstable strata one by one by induction. Upon each restriction there are new morphisms in one direction added, and this should produce a new semiorthogonal decomposition with equivalent components. 

% We may consider a simplified scenario: assume we have orthogonal full subcategories $\C_1,\C_2$ of $\C=\D(X)$, both of which restricts fully faithfully to $Y_1,Y_2\subset X$ and also to $Y_{12}=Y_1\cap Y_2$, and the morphisms from $\C_1$ to $\C_2$ as subcategories of $\D(Y_{12})$ are supported on $Z_1=X\backslash Y_1$, and the morphisms from $\C_2$ to $\C_1$ are supported on $Z_2=X\backslash Y_2$. We may pick generators $x_i\in \C_i$, let $x=x_1\oplus x_2$. As we have  $Y_{12}=Y_1\times_X Y_2$, we see that $\D(Y_{12})=\D(Y_{1})\otimes_{\D(X)}\D(Y_{2})$, and 

The excision argument is related naturally to the context of D-modules, we refer the readers to \cite{gaitsgory2019study} for more backgrounds. A prestack over $k$ is a general functor of points, i.e. a functor from finitely generated $k$-algebra to groupoids. For technical reasons (e.g. for limits of categories to be well-behaved) we want to work with DG categories and consider general DG algebras and $\infty$-groupoids, and all categories and functors are automatically derived. Given a prestack $Y$, the category of quasi-coherent sheaf is defined as $$\mathrm{QCoh}(Y):=\lim_{\mathrm{Spec\,}R\rightarrow Y} R\mathrm{-mod}$$

There is also a renormalized version named $\mathrm{IndCoh}(Y)$ which is the ind-completion of coherent sheaves. In our context it makes no difference as all coherent sheaves are perfect for smooth stacks. We use indcoherent sheaf terminology to match with the standard convention in the general context.

The de-Rham prestack is defined as $Y_{dR}(R)=Y(R^{red})$ and the category of D-modules is $\textrm{D-mod}(Y)=\mathrm{IndCoh}(Y_{dR})$. Two basic functors for all maps are $!$-pull $f^!$ and $*$-push $f_*$, and their left adjoints are partially defined. In particular, we have well defined $i_!=i_*$ for closed immersion $Z\longrightarrow Y$, and well defined $j^*=j^!$ for open immersion $U\longrightarrow Y$. If $U=Y \backslash Z$, they form the recollement exact triangle $$i_!i^!\longrightarrow id\longrightarrow j_*j^*\longrightarrow i_!i^![1]$$

Just as coherent cohomology is defined as the push forward along $Y\longrightarrow \mathrm{pt}$, the de Rham cohomology is defined as push forward along $Y_{dR}\longrightarrow \mathrm{pt}$. Moreover we have natural maps $Y\longrightarrow Y_{dR}\longrightarrow \mathrm{pt}$, which gives natural push-forward map $\mathrm{Ind}:\mathrm{IndCoh}(Y)\longrightarrow \textrm{D-mod}(Y)$, and satisfies $\Gamma=\Gamma_{dR}\circ \mathrm{Ind}$. So the calculation of coherent cohomology is reduced to the calculation of de Rham cohomology, and the recollement of D-modules exactly produces the excision triangle of coherent sheaves.

To understand $\Gamma_{\bun_G^{\xi,us}}(\bun_G^\xi,H)$, we only need to understand the de Rham cohomology of $i^!_{\delta}(\ind H)$ along each Harder-Narasimhan stratum $\bun_G^{\xi,\delta,us}\subset \bun_G^\xi$. This global section functor factors through the geometric constant term functor $\ct^{G\to M}:\dmod(\bun_G^\xi)\longrightarrow\dmod(\bun_M^\xi)$ as we have $\Gamma (\bun_G^{\xi,\delta,us},i^!_\delta(\ind H))=\Gamma(\bun_M^{\xi,\delta,ss},j^*\ct^{G\to M}(\ind H))=\Hom(j_*k,\ct^{G\to M}(\ind H))$  where $j=j_\delta:\bun_M^{\delta,\xi,ss}\longrightarrow\bun_M^{\delta,\xi}$ is the open embedding of the semistable loci. Here $\ct^{G\to M}$ is the obvious extension of standard definition of constant term functor to twisted bundles defined as $\ct^{G\to M}:=q_*\circ p^!$ where $\bun_M^\xi \overset{q}{\longleftarrow }\bun_P^\xi \overset{p}{\longrightarrow }\bun_G^\xi$ are the natural maps associated with $M\longleftarrow P\longrightarrow G$ for parabolic subgroup $P\subset G$ and Levi quotient $M$. Using the compatibility between localization and the constant term functor\cite{arinkin2024proof}, we may reduce the calculation of $\Gamma_{\bun_G^{\xi,us}}(\bun_G^\xi,H)$ to semi-infinite cohomology of the Weyl modules in the Kazhdan-Lusztig category and prove the vanishing results we need.

\bibliographystyle{plain}
\bibliography{ref}
\end{document}